\def\sqbullet{\raise.2ex\hbox{\vrule width 3.5pt height 3.5pt}}
\newtheorem{thm}{Theorem}[section]
\newtheorem{prop}[thm]{Proposition}
\newtheorem{cor}[thm]{Corollary}
\newtheorem{lem}[thm]{Lemma}
\theoremstyle{definition}
\newtheorem{define}[thm]{Definition}
\theoremstyle{remark}
\newtheorem{remark}[thm]{Remark}
\newtheorem{remarks}[thm]{Remarks}
\numberwithin{equation}{section}
 \newcommand{\R}{{\mathbb R}}
\newcommand{\sph}{{\mathbb S}}
\newcommand{\pol}{{\EuScript K}}
\newcommand{\p}{{\EuScript P}}
\newcommand{\Ss}{{\EuScript S}}
\newcommand{\Qq}{{\EuScript Q}}
\newcommand\Rr{{\EuScript R}}
\newcommand{\Mm}{{\EuScript M}}
\newcommand{\Tt}{{\EuScript T}}
\newcommand{\Ff}{{\EuScript F}}
\newcommand{\Cc}{{\EuScript C}}
\newcommand{\Ee}{{\EuScript E}}
\newcommand{\Gg}{{\EuScript G}}
\newcommand{\tildebaja}{{\raise.17ex\hbox{$\scriptstyle\sim$}}}
\newcommand{\Int}{\operatorname{Int}}
\newcommand{\im}{\operatorname{Im}}
\newcommand{\pp}{\operatorname{p}}
\newcommand{\rr}{\operatorname{r}}
\newcommand{\cl}{\operatorname{Cl}}
\newcommand{\dist}{\operatorname{dist}}
\newcommand{\id}{\operatorname{id}}
\newcommand{\conv}[2]{\vec{{\mathfrak C}}_{#2}(#1)}
\newcommand{\x}{{\tt x}} \newcommand{\y}{{\tt y}}
\newcommand{\z}{{\tt z}} \renewcommand{\t}{{\tt t}}
\newcommand{\ol}{\overline}
\newcommand{\veps}{\varepsilon}
\newcommand{\tth}{{\tt h}}
\newcommand{\ttg}{{\tt h_0}}
\newcommand{\ttgg}{{\tt g}}
\newcommand{\ttl}{{\tt l}}
\newcommand{\tts}{{\tt l_0}}
\newcommand{\ttf}{{\tt f}}
\newcommand{\ttq}{{\tt q}}
\newcommand{\ttL}{{\tt h}}
\numberwithin{equation}{section}
\renewcommand\thesubsection{\thesection.\Alph{subsection}}
\begin{document}

\definecolor{color1}{rgb}{0.20,0.20,0.20}
\definecolor{color2}{rgb}{0.40,0.40,0.40}
\definecolor{color3}{rgb}{0.60,0.60,0.60}

\title[On complements of convex polyhedra as polynomial images of $\R^n$]{On complements of convex polyhedra\\ as polynomial images of $\R^n$}

\author{Jos\'e F. Fernando}
\address{Departamento de \'Algebra, Facultad de Ciencias Matem\'aticas, Universidad Complutense de Madrid, 28040 MADRID (SPAIN)}
\curraddr{Dipartimento di Matematica, Universit\`a degli Studi di Pisa, Largo Bruno Pontecorvo, 5, 56127 PISA (ITALY)}
\email{josefer@mat.ucm.es}

\author{Carlos Ueno}
\address{Dipartimento di Matematica, Universit\`a degli Studi di Pisa, Largo Bruno Pontecorvo, 5, 56127 PISA (ITALY)}
\email{jcueno@mail.dm.unipi.it}

\thanks{Both authors are supported by Spanish GR MTM2011-22435. First author is also supported by Grupos UCM 910444. This work was also partially supported by the ``National Group for Algebraic and Geometric Structures and their Applications'' (GNSAGA - INdAM). This article has been written during a common one year research stay of the authors in the Dipartimento di Matematica of the Universit\`a di Pisa. The authors would like to thank the department for the invitation and the very pleasant working conditions. The one-year research stay of the first author is partially supported by MECD grant PRX14/00016.}

\date{04/05/2015}
\subjclass[2010]{Primary: 14P10, 52B10; Secondary: 52B55, 90C26.}
\keywords{Polynomial maps and images, complement of a convex polyhedra, rational separation of tuples of variables.}

\begin{abstract}
In this work we prove constructively that the complement $\R^n\setminus\pol$ of an $n$-dimensional unbounded convex polyhedron $\pol\subset\R^n$ and the complement $\R^n\setminus\Int(\pol)$ of its interior are polynomial images of $\R^n$ whenever $\pol$ does not disconnect $\R^n$. The compact case and the case of convex polyhedra of small dimension were approached by the authors in previous works. Consequently, the results of this article provide a full answer to the representation as polynomial images of Euclidean spaces of complements of convex polyhedra and its interiors. The techniques here are more sophisticated than those corresponding to the compact case and require a rational separation result for certain type of (non-compact) semialgebraic sets, that has interest by its own. 
\end{abstract}

\maketitle
 
\section{Introduction and statement of the main results}\label{s1}

A map $f:=(f_1,\ldots,f_m):\R^n\to\R^m$ is \em polynomial \em if its components $f_k\in\R[\x]:=\R[\x_1,\ldots,\x_n]$ are polynomials. Analogously, $f$ is \em regular \em if its components can be represented as quotients $f_k=\frac{g_k}{h_k}$ of two polynomials $g_k,h_k\in\R[\x]$ such that $h_k$ never vanishes on $\R^n$. A subset $\Ss\subset\R^n$ is \em semialgebraic \em when it has a description by a finite boolean combination of polynomial equations and inequalities. By Tarski-Seidenberg's principle \cite[1.4]{bcr} the image of an either polynomial or regular map is a semialgebraic set. During the last decade we have approached the problem of characterizing which (semialgebraic) subsets $\Ss\subset\R^m$ are polynomial or regular images of $\R^n$. The first proposal of studying this problem and related ones like the famous `quadrant problem' go back to \cite{g}.

The effective representation of a subset $\Ss\subset\R^m$ as a polynomial or regular image of $\R^n$ reduces the study of certain classical problems in Real Geometry to its study in $\R^n$, with the advantage of avoiding contour conditions. Examples of such problems appear in Optimization or in the search for Positivstellens\"atze certificates \cite{fg2,fu2}. Observe that these representations provide Positivstellensatz certificates for non-necessarily closed nor basic semialgebraic sets, whenever we are able to represent them as regular or polynomial images of $\R^n$.

We feel very far from solving the problem stated above in its full generality, but we have developed significant progresses in two ways:

\vspace*{1mm}
\noindent{\em Obtention of general properties.} We have found conditions \cite{f1,fg2,fu2,u1} that a semialgebraic subset $\Ss\subset\R^m$ must satisfy in order to be an either polynomial or regular image of $\R^n$. The most remarkable one states that the set of points at infinity of a polynomial image of $\R^n$ is connected \cite{fu1}. In addition, the $1$-dimensional case has been completely described in \cite{f1}. 

\vspace*{1mm}
\noindent{\em Explicit representation of families of semialgebraic sets as polynomial or regular images of $\R^n$.} We have devised techniques to represent large families of significant semialgebraic sets as either polynomial or regular images of $\R^n$. In \cite{f1,fg1,fgu1,u2} we focused on semialgebraic sets with piecewise linear boundary, that is, semialgebraic sets that admit a semialgebraic description involving only linear equations. To be more precise, we analyzed the cases of convex polyhedra and their interiors, together with their respective complements. For these families of semialgebraic sets we have full information concerning their representation as regular images \cite{fgu1,fu2} but we still lack much information when trying to represent them as polynomial (instead of regular) images.

Let us be more explicit in this point. Since the complements of proper convex polyhedra (or of their interiors) are unbounded semialgebraic sets, it makes sense to wonder whether these sets are not only images of regular maps, but also of polynomial maps. Our initial purpose when writing \cite{fu2} was to approach the previous problem in its full generality, but the techniques developed there required, in order to use polynomial maps, to assume that the involved convex polyhedra were compact when the dimension of $\pol$ matched that of the ambient space and was greater than or equal to $4$ (for further details see \cite{fu3}). Therefore, the main results appearing in \cite{fu2} refer to the compact case, together with the case of convex polyhedra of smaller embedding dimension:

\begin{thm}\label{main-known}\em (\cite[Thm. 1.1(i)]{fu2}) \em
Let $\pol$ be an $n$-dimensional compact convex polyhedron of $\R^n$. Then the semialgebraic sets $\Ss:=\R^n\setminus\pol$ and $\ol{\Ss}:=\R^n\setminus\Int(\pol)$ are polynomial images of $\R^n$.
\end{thm}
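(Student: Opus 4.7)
The plan is to argue by induction on the number $r$ of facets of $\pol$, after first normalizing its position by an affine (hence polynomial) change of coordinates. Write $\pol=\bigcap_{j=1}^r\{\ell_j\leq 0\}$ with each facet $F_j$ lying in the hyperplane $\{\ell_j=0\}$; compactness forces $r\geq n+1$ and every $\ell_j$ attains both signs on $\R^n$. The overall strategy is to produce, for each $\pol$, an explicit polynomial map $f:\R^n\to\R^n$ whose image is exactly $\Ss=\R^n\setminus\pol$, and a separate but parallel construction for $\ol{\Ss}=\R^n\setminus\Int(\pol)$. The distinction between the closed and open versions will be handled uniformly by toggling certain defining polynomials of the form $p$ against $p+q^2$: adding a square converts a strict inequality in the image into a non-strict one, which is the standard device for passing between a closed and an open semialgebraic piece without leaving the polynomial category.

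For the base case I would treat the $n$-simplex directly. Choosing coordinates so that $\pol$ is the standard simplex $\{\x_i\geq 0,\ \textstyle\sum\x_i\leq 1\}$, its complement is the disjoint union of $n+1$ manageable open regions, one for each violated inequality; I would parameterize this union by an explicit polynomial map that uses $n+1$ "branch variables" squared, so that the sign of an auxiliary coordinate selects which facet is crossed and the remaining coordinates sweep freely over that slab. This gives a clean model for $\R^n\setminus\pol$ (and a twin for $\R^n\setminus\Int(\pol)$) in the simplex case.

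For the inductive step, I would pick a facet $F_r$ whose omission still leaves a compact $n$-dimensional polyhedron $\pol_0=\pol\cup(\text{slice across }H_r)$ with strictly fewer facets, so by the inductive hypothesis there is a polynomial map $g:\R^n\to\R^n$ with image $\R^n\setminus\pol_0$. The key is to build a polynomial endomorphism $\Phi:\R^n\to\R^n$ whose effect is to "reinsert" the missing half-space strip along $F_r$ into the image: concretely, $\Phi$ should leave points far from $H_r$ essentially fixed while polynomially folding a neighborhood of $H_r$ so that the slab $\pol_0\setminus\pol$ becomes covered. Composing $\Phi\circ g$ (and using a partition-of-$\R^n$ argument via squared multipliers to blend the two behaviors) should yield a polynomial map with image exactly $\R^n\setminus\pol$. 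The same construction, with the earlier closed/open toggle, delivers the statement for $\R^n\setminus\Int(\pol)$.

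The hard part, and the step where the compactness hypothesis is essential, is the design of $\Phi$ in the inductive step: I need $\Phi$ to be globally polynomial, to map $\R^n\setminus\pol_0$ onto $\R^n\setminus\pol$, and yet to avoid accidentally puncturing the image or overshooting into $\pol$ itself. Compactness of $\pol$ gives uniform a priori bounds on the linear forms $\ell_j$ over the problematic regions and allows me to choose a single polynomial "cut-off" of high enough degree that dominates the error terms everywhere. Once $\pol$ is unbounded, no such uniform cut-off exists by purely polynomial means, which is precisely the obstruction that the present paper overcomes via its rational separation result for non-compact semialgebraic sets.
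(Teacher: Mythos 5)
First, a point of reference: the paper does not prove Theorem \ref{main-known} at all --- it is imported verbatim from \cite{fu2} and used as a black box (e.g.\ in the proof of Theorem \ref{nd}). The argument in that reference is a double induction in which the complement of the \emph{interior} is treated first and the inductive step is carried out by explicit ``folding'' polynomial maps built from the product of the facet equations, with the verification reduced to a two\--dimensional statement on vertical planes --- essentially the scheme of Section \ref{s3} of the present paper. Your proposal is not that argument, and as written it has gaps I do not see how to repair.

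The decisive one is the inductive step. You assume you can ``pick a facet $F_r$ whose omission still leaves a compact $n$-dimensional polyhedron $\pol_0$ with strictly fewer facets.'' Such a facet need not exist: $\pol_{r,\times}=\bigcap_{j<r}\{\ell_j\le0\}$ is compact if and only if the outward normals of the remaining facets still positively span $\R^n$, and for many polytopes the full set of facet normals is a \emph{minimal} positively spanning set. The cube $[0,1]^n$ is the standard example: deleting any one of its $2n$ half-spaces leaves an unbounded polyhedron, and the cube is not a simplex, so it is covered neither by your base case nor by your inductive step. This is not a technicality --- it is exactly why the proofs in \cite{fu2} and in Section \ref{s3} here must let the intermediate polyhedra $\pol_{i,\times}$ be unbounded and must control the folding map on unbounded regions. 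Beyond that, the two constructions you defer are precisely the hard content of the theorem and are not supplied: polynomial maps admit no partitions of unity, so ``blending two behaviors via squared multipliers'' is not a construction; the complement of the simplex is \emph{not} the disjoint union of the $n+1$ regions where single inequalities fail (they overlap), and a map $\R^n\to\R^n$ has no spare ``branch variables''; and passing between $\R^n\setminus\pol$ and $\R^n\setminus\Int(\pol)$ is not achieved by replacing a defining polynomial $p$ with $p+q^2$ --- in the present paper the two statements (Theorems \ref{main1} and \ref{main2}) require genuinely different proofs, the second needing the whole rational-separation machinery of Section \ref{s4}. You correctly identify where compactness enters (uniform bounds on the linear forms), but the proposal does not contain a proof.
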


\begin{prop}\em (\cite[Thm. 3.1]{fu2}) \em
Let $\pol$ be an $d$-dimensional convex polyhedron of $\R^n$ such that $d<n$ and it is not a hyperplane. Then the semialgebraic sets $\Ss:=\R^n\setminus\pol$ and $\ol{\Ss}:=\R^n\setminus\Int(\pol)$ are polynomial images of $\R^n$.
\end{prop}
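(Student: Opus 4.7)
The key observation is that $\pol$ has positive codimension in $\R^n$, so after an affine change of coordinates we may assume $\pol \subset \R^{n-1} \times \{0\} \subset \R^n$. Writing coordinates as $(s,t) \in \R^{n-1}\times\R$ and identifying $\pol$ with a $d$-dimensional convex polyhedron $\pol'\subset\R^{n-1}$, one obtains
\[
\R^n\setminus\pol \;=\; \bigl(\R^{n-1}\setminus\pol'\bigr)\times\{0\}\;\cup\;\R^{n-1}\times(\R\setminus\{0\}),
\]
and an analogous decomposition for $\R^n\setminus\Int(\pol)$. The plan is to exhibit a polynomial map $\R^n \to \R^n$ whose image is exactly this union.

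\emph{First step.} I would produce a polynomial surjection $g:\R^{n-1}\to\R^{n-1}\setminus\pol'$ (and analogously for the interior version). When $d=n-1$ and $\pol$ is compact this is exactly Theorem \ref{main-known} applied one dimension lower. When $d<n-1$, the polyhedron $\pol'$ still has codimension at least one in $\R^{n-1}$ and is not a hyperplane there, so the statement holds by induction on $n$; the base case reduces to the classical fact that $\R^m\setminus\{\text{pt}\}$ is a polynomial image of $\R^m$ for $m\geq 2$.

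\emph{Second step.} I would lift $g$ to a polynomial map $f:\R^n\to\R^n$ of the form
\[
f(s,t) \;:=\; \bigl(g(s)+t\cdot v(s,t),\;t\bigr),
\]
for a polynomial $v:\R^n\to\R^{n-1}$ to be chosen. At $t=0$ this specialises to $(g(s),0)$, which by Step~1 covers exactly $(\R^{n-1}\setminus\pol')\times\{0\}$. For $t\neq 0$ the second coordinate is $t$, so I only need the slice map $s\mapsto g(s)+t\,v(s,t)$ to surject onto $\R^{n-1}$ for each fixed $t\neq 0$; this is what covers the remaining piece $\R^{n-1}\times(\R\setminus\{0\})$. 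When $\pol'$ is bounded a constant $v$ is enough, since for suitable direction $e$ the translate $\pol'-t\,e$ can be pushed off any prescribed point. For the complement of the interior one argues identically, replacing $g$ by a polynomial surjection onto $\R^{n-1}\setminus\Int(\pol')$.

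\emph{Main obstacle.} The delicate point is the construction of $v$ ensuring slice-wise surjectivity uniformly in $t\neq 0$ when $\pol'$ is unbounded: a single translation in a fixed direction still leaves a translate of $\pol'$ uncovered. Here one has to exploit the structure of $\pol'$ as a finite intersection of half-spaces and combine polynomial shears, one for each unbounded facet direction, so that the aggregated image fills $\R^{n-1}\times(\R\setminus\{0\})$. This is the point where the restriction to strictly positive codimension genuinely helps, because the extra coordinate $t$ provides the room to carry out the shears without re-introducing intersections with $\pol\times\{0\}$; in the equidimensional compact case one instead needs the more elaborate construction underlying Theorem \ref{main-known}.
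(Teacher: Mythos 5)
There is a genuine gap, and it lies in the very structure of the map you propose. Your ansatz $f(s,t)=(g(s)+t\,v(s,t),\,t)$ preserves the last coordinate, so the hyperplane $\{t=0\}$ is mapped into itself and the slice $(\R^{n-1}\setminus\pol')\times\{0\}$ can only be covered by points of that same slice; this forces the existence of a polynomial surjection $g:\R^{n-1}\to\R^{n-1}\setminus\pol'$. Such a $g$ need not exist, even though the proposition still applies. Two concrete instances: (a) if $\pol$ is an affine subspace of codimension $2$, say $\pol=\R^{n-2}\times\{(0,0)\}$, then $\pol$ is not a hyperplane of $\R^n$ but $\pol'=\R^{n-2}\times\{0\}$ \emph{is} a hyperplane of $\R^{n-1}$, so $\R^{n-1}\setminus\pol'$ is disconnected and cannot be a polynomial image — this also refutes your inductive claim that ``$\pol'$ is not a hyperplane there''; (b) if $\pol=[-1,1]\times\R^{n-2}\times\{0\}$ (an $(n-1)$-dimensional layer inside its hyperplane), then $\pol'$ disconnects $\R^{n-1}$, again ruling out $g$, while $\R^n\setminus\pol$ is connected and is asserted to be a polynomial image. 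The extra coordinate $t$ is precisely the room that makes the proposition true, but a map that fixes $t$ cannot use it: one must allow points to be swept into and out of the hyperplane containing $\pol$.

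A second, independent, error occurs in your treatment of the slices $t\neq0$. With constant $v=e$ the slice map $s\mapsto g(s)+te$ has image $(\R^{n-1}\setminus\pol')+te=\R^{n-1}\setminus(\pol'+te)$, which always misses the nonempty translate $\pol'+te$; so the claim that a constant shear suffices when $\pol'$ is bounded is false, and the ``main obstacle'' you flag for unbounded $\pol'$ is already present in the bounded case. (For reference, the paper does not reprove this proposition — it is quoted from \cite[Thm.\ 3.1]{fu2} — but the argument there is of a different nature and does not proceed by covering each horizontal slice by itself.)
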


Similar techniques to those developed to prove Theorem \ref{main-known} can be adapted for unbounded $2$-dimensional and $3$-dimensional convex polyhedra \cite{fu3,u2}, but unfortunately they do not extend any further to higher dimensions \cite{fu3}. The purpose of this work is to close this gap and provide a full answer to the representation of the complements of convex polyhedra $\pol$ and their interiors as polynomial images of $\R^n$, disposing of the compactness assumption concerning $\pol$ that appears in \cite{fg2}. Since our previous methods did not work in this more general setting, we have developed new tools that use more sophisticated polynomial maps to achieve our goals. This requires a more technical approach than the one devised in \cite{fg2}, but reveals a better understanding on how polynomial maps can act on $\R^n$ to produce our desired image sets.

\subsection{Main results} 
From now on, we denote by $\Int(\pol)$ the relative interior of $\pol$ as a topological manifold with boundary. A \em layer \em is a convex polyhedron of $\R^n$ affinely equivalent to $[-a,a]\times\R^{n-1}$ with $a>0$. Our main results in this work, which complete the full picture in regard to the representation of complements of convex polyhedra and their interiors as polynomial images of Euclidean spaces, are the following:

\begin{thm}\label{main1}
Let $n\geq1$ and let $\pol$ be an $n$-dimensional unbounded convex polyhedron in $\R^n$ that is not a layer. Then the semialgebraic set $\ol{\Ss}:=\R^n\setminus\Int(\pol)$ is a polynomial image of $\R^n$.
\end{thm}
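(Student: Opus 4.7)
The plan is to proceed by a double induction on the dimension $n$ and the number of facets $k$ of $\pol$, after putting $\pol$ in a convenient affine normal form. Since affine equivalences of $\R^n$ preserve the property of being a polynomial image, we may fix one facet $F_1\subset\{\x_n=0\}$ and assume $\pol\subset\{\x_n\geq0\}$. The base case $k=1$ is trivial: if $\pol=\{\x_n\geq0\}$ then $\ol{\Ss}=\{\x_n\leq0\}$ is the image of $\R^n$ under $(\x_1,\ldots,\x_n)\mapsto(\x_1,\ldots,\x_{n-1},-\x_n^2)$. The low-dimensional cases $n\leq3$ are already known from \cite{fu3,u2}, so we can afford to assume $n\geq 4$ and $k\geq2$.

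For the inductive step, decompose $\pol=\pol_0\cap H$, where $H=\{\x_n\geq0\}$ is the half-space supporting $F_1$ and $\pol_0$ is the polyhedron cut out by the remaining $k-1$ facets. This yields the set-theoretic identity
\[
\R^n\setminus\Int(\pol)\;=\;\bigl(\R^n\setminus\Int(\pol_0)\bigr)\,\cup\,\bigl(\R^n\setminus\Int(H)\bigr).
\]
The right-hand side is the union of a (by inductive hypothesis) polynomial image and a closed half-space. The task is therefore to glue two polynomial parametrizations together into a single polynomial map $F:\R^n\to\R^n$ whose image is precisely this union. The tool advertised in the abstract — rational separation of tuples of variables for non-compact semialgebraic sets — is exactly what one needs here: it furnishes polynomials that separate $\pol_0$ from $\R^n\setminus H$ in a controlled way at infinity, so that the domain $\R^n$ can be partitioned into two semialgebraic regions, mapped by coordinated polynomial substitutions onto the two pieces above, with the images matching along $\partial\pol\cap\{\x_n=0\}$.

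The overall skeleton is then: (1) apply the inductive hypothesis to $\pol_0$ to obtain a polynomial map $g:\R^n\to\R^n$ with image $\R^n\setminus\Int(\pol_0)$; (2) use a rationally-separating polynomial tuple $\ttq=(\ttq_1,\ldots,\ttq_n)$ to design a substitution $\sigma:\R^n\to\R^n$ whose ``large'' region parametrizes the half-space $\{\x_n\leq0\}$ and whose ``small'' region lies where $g$ produces the bulk of $\R^n\setminus\Int(\pol_0)$; (3) combine $g\circ\sigma$ with a final polynomial correction in the last coordinate (of the form $\x_n\mapsto-\x_n^2\cdot\alpha(\x)+\beta(\x)$ with $\alpha,\beta$ built from $\ttq$) that absorbs the missing slab $\{0\leq \x_n<\veps\}\cap\pol_0$ into the image.

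The main obstacle will be the propagation of the non-layer hypothesis through the induction: when we pass from $\pol$ to $\pol_0$ we lose a facet, and $\pol_0$ may degenerate into a layer, a hyperplane, or even disconnect $\R^n$ for bad choices of the stripped facet $F_1$. Handling this requires a careful combinatorial selection of which facet to remove at each inductive step, exploiting the fact that $\pol$ itself is not a layer to guarantee the existence of at least one facet whose removal preserves unboundedness, connectedness of the complement, and $n$-dimensionality. A secondary difficulty is the unbounded recession cone of $\pol$: the polynomial map $F$ must cover points of $\ol{\Ss}$ that escape to infinity in directions inside the recession cone, and this is precisely what the rational separation device is designed to achieve in a uniform way, by controlling the growth of the separating polynomials relative to the facet-defining linear forms.
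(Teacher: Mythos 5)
Your skeleton --- double induction on the dimension and the number of facets, peeling off one facet and using the identity $\R^n\setminus\Int(\pol)=(\R^n\setminus\Int(\pol_{m,\times}))\cup(\R^n\setminus\Int(H))$ --- is exactly the strategy of the paper, and your base cases are fine. But the proposal stops where the real work begins. Steps (2) and (3) are placeholders: you never exhibit the polynomial map carrying $\R^n\setminus\Int(\pol_{m,\times})$ onto $\R^n\setminus\Int(\pol)$, nor any mechanism for proving surjectivity. Keep in mind that a polynomial map cannot literally be glued from two parametrizations along a semialgebraic partition of the domain; what is needed is a single global formula that restricts to (essentially) the identity on the part of $\R^n\setminus\Int(\pol_{m,\times})$ lying on one side of $H$ and simultaneously sweeps the region lying above the polyhedron onto the whole missing half-space. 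The paper achieves this with the explicit map ${\tt F}(\x)=\bigl(\x'\,(({\tt P}-1)^2+{\tt P}^2),\ \x_n{\tt P}^2\bigr)$, where ${\tt P}=1-{\tt Q}\,{\tt G}^2$, ${\tt G}$ is a product of the facet equations (squared for the vertical ones) and ${\tt Q}$ is a polynomial dominating the non-vertical facet equations; verifying that the image is exactly the claimed set occupies most of Section \ref{s3} (reduction to planar sections through vertical $2$-planes, analysis of the level curves of ${\tt F}_2$ via Nash parametrizations and Janiszewski's theorem). Nothing in your outline supplies this or a substitute for it. Moreover, the tool you invoke to fill the gap --- the rational separators of tuples of variables --- is not used for this theorem at all: in the paper it is needed only for Theorem \ref{main2}, the complement of $\pol$ itself, where the image must avoid $\partial\pol$. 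For the complement of the interior, products of the linear facet equations suffice precisely because the boundary is allowed in the image.

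A second, independent gap is the difficulty you flag but do not resolve: after removing a facet, $\pol_{m,\times}$ may fail to satisfy the inductive hypotheses (it may be degenerate, e.g.\ of the form $\p\times\R$ with $\p$ bounded, and for $n=2$ even a layer, so that its complement disconnects). A ``careful combinatorial selection of the facet'' cannot fix this by itself: when all facets but one are vertical, every admissible choice produces this situation. The paper's resolution is geometric rather than combinatorial: it places $\pol$ with $-\vec{e}_n$ in the interior of the recession cone and removes a non-vertical facet, and in the bad case $r=m-1$ it inserts an auxiliary polynomial map ${\tt F}_0$ taking $\R^n\setminus\Int(\p\times\R)$ onto $\R^n\setminus(\Int(\p)\times{]{-\infty},0]})$ before applying ${\tt F}$, thereby bypassing the induction on the number of facets in that case. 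Without an argument of this kind your induction does not close.
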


\begin{thm}\label{main2}
Let $n\geq2$ and let $\pol$ be an $n$-dimensional unbounded convex polyhedron in $\R^n$ that is not a layer. Then the semialgebraic set $\Ss:=\R^n\setminus\pol$ is a polynomial image of $\R^n$.
\end{thm}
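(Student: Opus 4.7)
My plan is to follow the template of Theorem \ref{main1} and adjust the auxiliary maps so that the boundary $\partial\pol$ is excluded from the image rather than included. Both theorems concern exactly the same family of polyhedra, and the set-theoretic difference between $\R^n\setminus\pol$ and $\R^n\setminus\Int(\pol)$ is precisely $\partial\pol$, so one expects the two arguments to run in parallel. After an affine change of coordinates, I would put $\pol$ into a normal form dictated by its recession cone $\vec{\pol}$, writing $\pol=\bigcap_{i=1}^{r}\{H_{i}\geq 0\}$ with linear forms $H_{i}$ vanishing on the facets $F_{i}$. The hypothesis that $\pol$ is not a layer prevents $\vec{\pol}$ from being a hyperplane, which would disconnect $\R^n\setminus\pol$ and obstruct any representation as a polynomial image.

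I would then argue by induction on the number $r$ of facets of $\pol$. The base case $r=1$ is that of an open half-space $\{H_{1}<0\}$, and requires constructing an explicit polynomial surjection $\R^{n}\to\{H_{1}<0\}$; this is where the hypothesis $n\geq 2$ enters, since the image of a polynomial map $\R\to\R$ is a closed interval and cannot be an open half-line. For the inductive step, I would combine a surjection onto $\R^{n}\setminus\pol_{r-1}$, where $\pol_{r-1}=\bigcap_{i<r}\{H_{i}\geq 0\}$, with a polynomial self-map of $\R^{n}$ that ``folds'' the excess region $\{H_{r}>0\}\cap\pol_{r-1}$ onto its complement in $\R^{n}\setminus\pol$. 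To coordinate these folds and to control the image along the recession directions, I would invoke the rational separation theorem alluded to in the abstract, which provides polynomial building blocks with prescribed signs on the facets $F_{i}$ and controlled asymptotic behaviour.

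To pass from the image-with-boundary produced by the analogue of Theorem \ref{main1} to the image-without-boundary required here, I would perturb each auxiliary polynomial by a strictly positive term (for instance a sum of squares with no real zero, or a positive constant shift) so that the forbidden values on $\partial\pol$ are never attained at any stage of the composition. The principal obstacle is handling the unbounded geometry while remaining polynomial: in the compact setting of Theorem \ref{main-known} one may use polynomials that tend to $\infty$ outside a compact set, but this tool is unavailable here because $\pol$ itself is unbounded, so such a ``bounding'' polynomial would also blow up along $\pol$. One must instead design polynomial maps compatible with $\vec{\pol}$ that cover every unbounded slice of $\R^{n}\setminus\pol$ without accidentally covering points of $\partial\pol$. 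The rational separation result plays the role that bounding polynomials played in the compact case, and engineering the strict exclusion of $\partial\pol$ at each inductive step — since one cannot subtract a Zariski-closed set from a polynomial image after the fact — is the most delicate part of the construction.
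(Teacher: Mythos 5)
Your overall flavour (induction over facets, folding maps built from the rational separators) is in the right family, but the two load-bearing steps of your plan do not work as described, and the paper's actual architecture is different in a way that matters. First, the idea of excluding $\partial\pol$ by ``perturbing each auxiliary polynomial by a strictly positive term'' is not viable: the folding maps of Theorem \ref{main1} (and the maps ${\tt T}_\pol$ of \eqref{T}) achieve their image control precisely because the factor ${\ttL}=\prod_i{\tth}_i$ vanishes on the facet hyperplanes, so the maps restrict to the identity there; a positive shift destroys this and with it every containment you need. Moreover, a point of $\partial\pol$ can be hit as the image of a point far from $\partial\pol$, so no local perturbation of the last map controls which boundary points lie in the image of a composition. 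The paper avoids the problem at the source rather than the target: it enlarges $\pol$ to a polyhedron $\pol_0$ with $\pol\subset\Int(\pol_0)$, starts from the \emph{closed} set $\R^n\setminus\Int(\pol_0)$ (covered by Theorems \ref{main-known} and \ref{main1}), and then applies maps that carry $\R^n\setminus\Gg_i$ onto $\R^n\setminus\Gg_{i+1}$, where $\Gg_i=H_1^+\cap\cdots\cap H_i^+\cap\Int(H_{i+1}^+(\veps))\cap\cdots\cap\Int(H_m^+(\veps))$ interpolates between $\Int(\pol_0)$ and $\pol$. The boundary points that must stay out of the final image are excluded from the very first domain and are fixed (not created) by every subsequent map; nothing ever has to be ``removed'' from a polynomial image.

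Second, your inductive step --- map $\R^n\setminus\pol_{r-1}$ onto $\R^n\setminus\pol$ by folding the slab $\pol_{r-1}\cap\{{\tth}_r<0\}$ --- is exactly the ``ideal situation'' that Section \ref{s5} explains cannot be realized directly: the pole set of the rational separator $\phi_{r,s}$ (equivalently, the zero set of ${\ttf}_1$ in Lemma \ref{prop:polP}) prevents the map ${\tt T}_\pol$ from covering the faces of the polyhedron lying over $\partial\p=\partial(\pi_n(\pol))$. Lemma \ref{prop:face}(iv) and Corollary \ref{cor:convex1} quantify the defect, and Corollaries \ref{cor:convex2} and \ref{cor:convex3} supply the extra correction maps (after re-positioning the polyhedron so that the offending faces are non-parallel to $\Pi_{n-1}$, using Lemma \ref{lem:openface}) needed to sweep up those leftover open faces. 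Your proposal does not acknowledge this obstruction or propose a substitute for the correction step, and without it the inductive step fails to be onto. You also do not address the reduction of the degenerate case to the non-degenerate one, nor the $k=1$ case $\pol=\{\x_1\le 0\}$, which is where the explicit map with image $\{\x_1>0\}$ and the hypothesis $n\ge2$ actually enter.
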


It is worthwhile to mention here that the proof of Theorem \ref{main2} involves a separation result for tuples of variables that has interest by its own. A \em rational separator for the pair of positive integers $(r,s)$ \em is a rational function $\phi_{r,s}:\R^r\times\R^s\dashrightarrow\R$ that is regular on the interior of the polyhedron
$$
\Qq_{r,s}:=\{(y_1,\dots,y_r;z_1,\dots,z_s)\in\R^r\times\R^s:\ \max\{y_1,\dots,y_r\}\le \min\{z_1,\dots,z_s\}\},
$$ 
extends to a continuous (semialgebraic) function on $\Qq_{r,s}$ and satisfies
$$
\max\{y_1,\ldots,y_r\}<\phi_{r,s}(y;z)<\min\{z_1,\ldots,z_s\}
$$
for each $(y;z):=(y_1,\dots,y_r;z_1,\dots,z_s)\in\Int(\Qq_{r,s})$. In Lemma \ref{lemma} we show that for each pair of positive integers $(r,s)$ there exists rational separators. As a consequence we prove in Lemma \ref{prop:polP} that given an $n$-dimensional convex polyhedron of $\R^n$ and the projection $\pi_n:\R^{n}\to\R^{n-1},\ (x_1,\ldots,x_n)\to(x_1,\ldots,x_{n-1})$, the two connected components of the difference $(\Int(\pi_n(\pol))\times\R)\setminus\pol$ can be separated by a rational function that is regular on $\Int(\pi_n(\pol))$, extends to a continuous function on $\pi_n(\pol)$ and has a pole set contained in its zero set. As it is well-known, the separation of disjoint semialgebraic sets is a delicate issue and we refer the reader to \cite{aab} for further details.

\subsection{Full picture} To ease the presentation of the full picture of what is known concerning piecewise linear boundary semialgebraic sets \cite{fgu1,fu1,fu2,u2} we introduce the following two invariants. Given a semialgebraic set $\Ss\subset\R^m$, we define
\begin{equation*}
\begin{split}
\pp(\Ss):&=\inf\{n\geq1:\exists \ f:\R^n\to\R^m\, \, \, \text{polynomial such that}\, \, \, f(\R^n)=\Ss\},\\
\rr(\Ss):&=\inf\{n\geq1:\exists \ f:\R^n\to\R^m\, \, \, \text{regular such that}\, \, \, f(\R^n)=\Ss\}.
\end{split}
\end{equation*}
The condition $\pp(\Ss):=+\infty$ characterizes the non representability of $\Ss$ as a polynomial image of some $\R^n$ while $\rr(\Ss):=+\infty$ has the analogous meaning for regular maps. Let $\pol\subset\R^n$ be an $n$-dimensional convex polyhedron and assume below that $\Ss:=\R^n\setminus\pol$ and $\ol{\Ss}=\R^n\setminus\Int(\pol)$ are connected. 
\begin{table}[ht]
$$
\renewcommand*{\arraystretch}{1.3}
\begin{array}{|c|c|c|c|c|c|c|c|c|c|}
\hline 
&\multicolumn{2}{c|}{\text{$\pol$ bounded}}&\multicolumn{4}{c|}{\text{$\pol$ unbounded}}\\[2pt] 
\cline{2-7} 
&n=1&n\geq2&n=1&\multicolumn{3}{c|}{n\geq2}\\[2pt] 
\hline
{\rm r}(\pol)&1&\multirow{2}{*}{$n$}&1&\multicolumn{3}{c|}{\multirow{2}{*}{$n$}}\\[2pt] 
\cline{1-2}\cline{4-4}
{\rm r}(\Int(\pol))&2&&2&\multicolumn{3}{c|}{}\\[2pt] 
\cline{1-1}\cline{2-7}
{\rm p}(\pol)&\multicolumn{2}{c|}{\multirow{2}{*}{$+\infty$}}&1&\multicolumn{3}{c|}{n, +\infty\ (\ast)}\\[2pt] 
\cline{1-1}\cline{4-7}
{\rm p}(\Int(\pol))&\multicolumn{2}{c|}{}&2&\multicolumn{3}{c|}{n, n+1, +\infty\ (\star)}\\[2pt] 
\cline{1-7}
{\rm r}(\Ss)&\multirow{4}{*}{$+\infty$}&\multirow{4}{*}{$n$}&2&\multicolumn{3}{c|}{\multirow{4}{*}{$n$}}\\[2pt] 
\cline{1-1}\cline{4-4}
{\rm r}(\ol{\Ss})&&&1&\multicolumn{3}{c|}{}\\[2pt] 
\cline{1-1}\cline{4-4}
{\rm p}(\Ss)&&&2&\multicolumn{3}{c|}{}\\[2pt] 
\cline{1-1}\cline{4-4} 
{\rm p}(\ol{\Ss})&&&1&\multicolumn{3}{c|}{}\\[2pt] 
\hline
\end{array}
$$
\caption{State of the art}\label{tabla}
\end{table}

\vspace{-10mm}
Let us explain some (marked) cases in {\sc Table}~\ref{tabla} that are being developed in \cite{fgu2}:
\begin{itemize}
\item[$(\ast)$] $(n,\,+\infty)$: An $n$-dimensional convex polyhedron $\pol\subset\R^n$ has ${\rm p}(\pol)=+\infty$ if and only if its recession cone $\conv{\pol}{}$ has dimension $<n$; otherwise ${\rm p}(\pol)=n$.
\item[$(\star)$] $(n,n+1,\,+\infty)$: If the recession cone $\conv{\pol}{}$ of an $n$-dimensional convex polyhedron $\pol$ has dimension $<n$, then ${\rm p}(\Int(\pol))=+\infty$. Otherwise, if $\pol$ has bounded facets, ${\rm p}(\Int(\pol))=n+1$ and if $\pol$ has no bounded facets, ${\rm p}(\Int(\pol))=n$.
\end{itemize}

\subsection{Structure of the article} All basic notions and (standard) notation appear in Section \ref{s2}. The reader can start directly in Section \ref{s3} and return to these preliminaries when needed. In Section \ref{s3} we prove Theorem~\ref{main1}, while Theorem~\ref{main2} is proved in Section~\ref{s5}. In Section~\ref{s4} we provide some rational separation results for certain types of (non-compact) semialgebraic sets. These results are the key to construct explicitly a polynomial map whose image is the complement of a convex polyhedron.

\section{Preliminaries on convex polyhedra}\label{s2}

We begin by introducing some preliminary terminology and notations concerning convex polyhedra. For a detailed study of the main properties of convex sets we refer the reader to \cite{ber1,r,z}. An affine hyperplane of $\R^n$ will be written as $H:=\{x\in\R^n:\ {\tth}(x)=0\}\equiv\{{\tth}=0\}$ for a linear equation ${\tth}$. It determines two \emph{closed half-spaces}
$$
H^+:=\{x\in\R^n:\ {\tth}(x)\geq 0\}\equiv\{{\tth}\geq0\}\quad\text{and}\quad H^-:=\{x\in\R^n:\ {\tth}(x)\leq 0\}\equiv\{{\tth}\leq0\}.
$$
We use an overlying arrow $\vec{\cdot}$ when referring to the corresponding vectorial objects. An affine subspace $W$ of $\R^n$ is called \em vertical \em if it is parallel to the vector $\vec{e}_n:=(0,\ldots,0,1)$. Otherwise, we say that $W$ is \em non-vertical\em.

\subsection{Generalities on convex polyhedra}\label{pldrfc} 
A subset $\pol\subset\R^n$ is a \emph{convex polyhedron} if it can be described as the finite intersection $\pol:=\bigcap_{i=1}^rH_i^+$ of closed half-spaces $H_i^+$. The dimension $\dim(\pol)$ of $\pol$ is the dimension of the smallest affine subspace of $\R^n$ that contains $\pol$. If $\pol$ has non-empty interior there exists by \cite[12.1.5]{ber1} a unique minimal family $\{H_1,\ldots,H_m\}$ of affine hyperplanes in $\R^n$ such that $\pol=\bigcap_{i=1}^mH_i^+$. This family is the \em minimal presentation \em of $\pol$. We assume that we choose the linear equation ${\tth}_i$ of each $H_i$ so that $\pol\subset H_i^+$. For inductive processes we will write $\pol_{i,\times}:=\bigcap_{j\neq i}H_j^+$, which is a convex polyhedron that strictly contains $\pol$, satisfies $\pol=\pol_{i,\times}\cap H_i^+$ and has one facet less than $\pol$. 

\subsubsection{}The \em facets \em or $(n-1)$-\em faces \em of $\pol$ are the intersections $\Ff_i:=H_i\cap\pol$ for $1\leq i\leq m$. Only the convex polyhedron $\R^n$ has no facets. Each facet $\Ff_i:=H_i^-\cap\bigcap_{j=1}^mH_j^+$ is a convex polyhedron contained in $H_i$. The convex polyhedron $\pol\subset\R^n$ is a topological manifold with boundary, whose interior is $\Int\pol=\bigcap_{i=1}^m(H_i^+\setminus H_i)$ and its boundary is $\partial\pol=\bigcup_{i=1}^m\Ff_i$. For $0\leq j\leq n-2$ we define inductively the $j$-\em faces \em of $\pol$ as the facets of the $(j+1)$-faces of $\pol$, which are again convex polyhedra. The $0$-faces are the \em vertices \em of $\pol$ and the $1$-faces are the \em edges \em of $\pol$. A face $\Ee$ of $\pol$ is \em vertical \em if the affine subspace of $\R^n$ generated by $\Ee$ is vertical. Otherwise, we say that $\Ee$ is \em non-vertical\em. Obviously, if $\pol$ has a vertex, then $m\geq n$. A convex polyhedron of $\R^n$ is \em non-degenerate \em if it has at least one vertex. Otherwise, we say that the convex polyhedron is \em degenerate\em. 

\subsubsection{}A {\em supporting hyperplane} of a convex polyhedron $\pol\subset\R^n$ is a hyperplane $H$ of $\R^n$ that intersects $\pol$ and satisfies $\pol\subset H^+$ or $\pol\subset H^-$. This is equivalent to have $\varnothing\neq\pol\cap H\subset\partial\pol$. The intersection of $\pol$ with a supporting hyperplane $H$ is a face of $\pol$ and conversely each face of $\pol$ is the intersection of $\pol$ with some supporting hyperplane. In particular, the vertices of a convex polyhedron $\pol\subset\R^n$ are those points $p\in\pol$ for which there exists a (supporting) hyperplane $H\subset\R^n$ such that $\pol\cap H=\{p\}$. 

\subsection{Projections of convex polyhedra}\label{lem:basic}
Let $\pol\subset\R^n$ be an $n$-dimensional convex polyhedron and let $\pi_n:\R^n\to\R^{n-1},\ x:=(x',x_n)\to x'$ be the projection onto the first $(n-1)$ coordinates. We denote the origin of $\R^n$ with ${\bf0}$ and that of $\R^{n-1}$ with ${\bf0}'$. Let $\p:=\pi_n(\pol)$ and let $\vec{\ell}_n$ be the line generated by $\vec{e}_n:=(0,\ldots,0,1)=({\bf0}',1)$. By \cite[II.Thm.6.6]{r} we have $\pi_n(\Int(\pol))=\Int(\p)$ and consequently $\pi_n^{-1}(\partial\p)\cap\pol\subset\partial\pol$. Thus, if $\ell$ is a vertical line and $\pi_n(\ell)\subset\partial\p$, then
$$
\ell\cap\pol=\pi_n^{-1}(\pi_n(\ell))\cap\pol\subset\pi_n^{-1}(\partial\p)\cap\pol\subset\partial\pol.
$$
In fact, \em $\pi_n^{-1}(\partial\p)\cap\pol\subset\partial\pol$ is a union of faces of $\pol$\em.

Indeed, let $\Ff'$ be a facet of $\p$ and let $H'$ be the hyperplane of $\R^{n-1}$ generated by $\Ff'$. Notice that $H:=\pi^{-1}(H')=(H'\times\{0\})+\vec{\ell}_n$ is a hyperplane of $\R^n$ that meets $\pol$ but does not meet $\Int(\pol)$. Thus, $H$ is a supporting hyperplane of $\pol$ and $H\cap\pol=:\Ee$ is a face of $\pol$. Therefore
$$
\pi^{-1}(\Ff')\cap\pol=\pi^{-1}(H'\cap\p)\cap\pol=\pi^{-1}(H')\cap\pi^{-1}(\p)\cap\pol=H\cap\pol=\Ee
$$
is a face of $\pol$.

\subsection{Recession cone of a convex polyhedron}

We associate to each convex polyhedron $\pol\subset\R^n$ its \em recession cone\em, see \cite[Ch.1]{z} and \cite[II.\S8]{r}. Fix a point $p\in\pol$vand consider for each vector $\vec{v}\in\R^n$ the ray $p\vec{v}$ with origin at $p$ and direction $\vec{v}$. \em Then the set $\conv{\pol}{}:=\{\vec{v}\in\R^n:\,p\vec{v}\subset\pol\}$ is a convex cone and it does not depend on the choice of $p$\em. This set $\conv{\pol}{}$ is called the \em recession cone \em of $\pol$. If $\pol:=\bigcap_{i=1}^rH_i^+$, then $\conv{\pol}{}:=\bigcap_{i=1}^r\conv{H_i^+}{}=\bigcap_{i=1}^r\vec{H_i}^+$. Clearly, $\conv{\pol}{}=\{{\bf0}\}$ if and only if $\pol$ is bounded. In addition, if $\p\subset\R^n$ is a non-degenerate convex polyhedron and $k\geq1$, then $\conv{\R^k\times\p}{}=\R^k\times\conv{\p}{}$. Recall that each degenerate convex polyhedra can be written as the product of a non-degenerate convex polyhedron times an Euclidean space. Besides, a convex polyhedron is degenerate if and only if it contains a line or, equivalently, if its recession cone contains a line. Consequently a convex polyhedron is non-degenerate if and only if all its faces are non-degenerate polyhedra.

\section{Complements of interiors of convex polyhedra}\label{s3}

The purpose of this section is to provide a constructive proof of Theorem \ref{main1}. The proof can be schematized as follows: We proceed by double induction on the dimension and the number of facets of $\pol$. We place the polyhedron $\pol$ so that the vector $-\vec{e}_n=(0,\dots,0,-1)$ lies in its recession cone and one of its facets $\Ff_i$ lies in the hyperplane $\{\x_n=0\}$. By the induction hypothesis, the complement of the interior of the polyhedron $\pol_{i,\times}$ (which has one facet less than $\pol$) is a polynomial image of $\R^n$. The main task is to construct a polynomial map ${\tt F}$ that sends this complement onto the complement of the original polyhedron $\pol$. The right image in Figure~\ref{im:poly} describes the behavior of the polynomial map we are about to construct. In order to clarify this process we split it in two stages. In the first one we reduce the proof of Theorem \ref{main1} to show that the complement in $\R^2$ of the interior of a convex polygon is the image of certain polynomial maps ${\ttf}:\R^2\to\R^2$. In the second stage we actually prove this fact.

\subsection{Reduction of the proof of Theorem \ref{main1} to the 2-dimensional case}\label{subs:red}
The reduction is conducted in several steps.

\subsubsection{Setting up the scenario}
We proceed by double induction on the pair $(n,m)$, where $n$ denotes the dimension of $\pol$ and $m$ its number of facets. The result is trivial for $n=1$ because in this case layers correspond precisely to bounded closed intervals that disconnect $\R$, whereas $\R^n\setminus\Int(\pol)$ for unbounded $\pol$ is affinely equivalent to ${[0,+\infty[}$, which is the image of the polynomial map ${\ttf}:\R\to\R,\ x\mapsto x^2$. Assume $n\geq2$ and the result true for all unbounded convex polyhedra that have either dimension $\leq n-1$, or dimension $n$ and less than $m$ facets.

Let $\pol\subset\R^n$ be an $n$-dimensional unbounded convex polyhedron with $m$ facets, which is not a layer. If $\pol$ is degenerate, we can assume $\pol=\p\times\R$ where $\p\subset\R^{n-1}$ is a convex polyhedron different from a layer. By the induction hypothesis there exists a polynomial map ${\ttf}_0:\R^{n-1}\to\R^{n-1}$ whose image is $\R^{n-1}\setminus\Int(\p)$. The image of the polynomial map ${\ttf}:\R^n\to\R^n,\ x:=(x',x_n)\mapsto({\ttf}_0(x'),x_n)$ is $\R^n\setminus\Int(\pol)$ and we are done.

Assume that $\pol$ is non-degenerate and unbounded, with $-\vec{e}_n\in{\rm Int}(\conv{\pol}{})$. Let $\Ff_1,\ldots,\Ff_r$ be the vertical facets of $\pol$ and let $\Ff_{r+1},\ldots,\Ff_{m}$ be the non-vertical ones. As $\pol$ is non-degenerate, $r<m$. Depending on the value of $r$ we distinguish two cases:

\subsubsection{\sc Case 1} If $r<m-1$, assume, after a change of coordinates that keeps the vector $\vec{e}_n$ invariant, that $\Ff_m\subset\{\x_n=0\}$ and the origin is contained in the interior of $\Ff_m$. In particular we have $\pol\subset\{\x_n\le 0\}$.

\subsubsection{\sc Case 2} If $r=m-1$, we need an intermediate polynomial map that transforms $\R^n\setminus\pol_{m,\times}$ into a semialgebraic set more suitable to our purposes. The convex polyhedron $\pol_{m,\times}:=H_1^+\cap\cdots\cap H_{m-1}^+$ is degenerate. All the (unbounded) faces of $\pol_{m,\times}$ are vertical, so $\pol_{m,\times}=\p\times\R$ where $\p$ is non-degenerate (because otherwise $\pol$ would be degenerate).

\paragraph{} Let us check: \em $\p$ is bounded\em. 

As $H_m$ is non-vertical, after a change of coordinates that keeps $\vec{e}_n$ invariant we may assume that $H_{m}=\{-\x_n=0\}$ and also that the origin of $\R^{n-1}$ is contained in $\Int(\p)$. Since $\pol=\pol_{m,\times}\cap H_m^+$ and $\pol_{m,\times}=\p\times\R$, we have $\conv{\pol}{}=\conv{\pol_{m,\times}}{}\cap\vec{H}_m^+$ and $\conv{\pol_{m,\times}}{}=\conv{\p}{}\times\R$. Thus, 
$$
\conv{\pol}{}=(\conv{\p}{}\times\R)\cap\vec{H}_m^+=(\conv{\p}{}\times\R)\cap\{-\x_n\geq0\}=\conv{\p}{}\times{]{-\infty},0]}
$$ 
and consequently
$$
\partial\conv{\pol}{}=(\partial\conv{\p}{}\times{]-\infty,0]})\cup(\conv{\p}{}\times\{0\}).
$$
As $\p$ is non-degenerate, then either ${\bf0}'\in\partial\conv{\p}{}$ or $\conv{\p}{}=\{{\bf0'}\}$. As $-\vec{e}_n=({\bf0}',-1)\in{\rm Int}(\conv{\pol}{})$, we conclude $\conv{\p}{}=\{{\bf0'}\}$, so $\p$ is bounded. 

\paragraph{}Assume $\pol:=\p\times{]{-\infty},0]}$ where $\p$ is a bounded convex polyhedron and the origin of $\R^{n-1}$ is contained in $\Int(\p)$. Write $H_i:=\{{\tth}_i=0\}$ where ${\tth}_i$ is a linear equation and let ${\ttL}:=\prod_{i=1}^{m-1}{\tth}_i$. Notice that ${\ttL}$ does not depend on the variable $\x_n$. We claim: \em The polynomial map
$$
{\tt F}_0(\x_1,\dots,\x_n):=((1-\x_n{\ttL}^2(\x_1,\ldots,\x_{n-1}))\x_1,\dots,(1-\x_n{\ttL}^2(\x_1,\ldots,\x_{n-1}))\x_{n-1},\x_n)
$$
satisfies ${\tt F}_0(\R^n\setminus\Int(\p\times\R))=\R^n\setminus(\Int(\p)\times{]{-\infty},0]})$.
\em

Indeed, given $\vec{u}\in\sph^{n-2}\subset\R^{n-1}$ denote the line $\{t\vec{u}:\ t\in\R\}$ generated by the vector $\vec{u}$ with $\ell_{\vec{u}}$. As $\Int(\p\times\R)=\Int(\p)\times\R$, 
\begin{multline*}
\R^n=\bigcup_{\lambda\in\R}\bigcup_{\vec{u}\in\sph^{n-2}}\ell_{\vec{u}}\times\{\lambda\},\quad \R^n\setminus\Int(\p\times\R)=\bigcup_{\lambda\in\R}\bigcup_{\vec{u}\in\sph^{n-2}}(\ell_{\vec{u}}\setminus\Int(\p))\times\{\lambda\},\\
\R^n\setminus(\Int(\p)\times{]{-\infty},0]})=\bigcup_{\lambda\leq0}\bigcup_{\vec{u}\in\sph^{n-2}}(\ell_{\vec{u}}\setminus\Int(\p))\times\{\lambda\}\cup\bigcup_{\lambda>0}\bigcup_{\vec{u}\in\sph^{n-2}}\ell_{\vec{u}}\times\{\lambda\}.
\end{multline*}
We must show that
$$
{\tt F}_0((\ell_{\vec{u}}\setminus\Int(\p))\times\{\lambda\})=\begin{cases}
(\ell_{\vec{u}}\setminus\Int(\p))\times\{\lambda\}&\text{if $\lambda\leq0$},\\
\ell_{\vec{u}}\times\{\lambda\}&\text{if $\lambda>0$}.\\
\end{cases}
$$
Fix $\vec{u}\in\sph^{n-2}$ and let $a<0<b$ be such that $\ell_{\vec{u}}\cap\p=\{t\vec{u}:\ t\in[a,b]\}$ (recall that the origin of $\R^{n-1}$ is contained in $\Int(\p)$). Fix $\lambda\in\R$ and write ${\tt F}_0({\tt t}\vec{u},\lambda)=({\tt P}_{\vec{u},\lambda}({\tt t})\vec{u},\lambda)$ where ${\tt P}_{\vec{u},\lambda}({\tt t}):=(1-\lambda{\ttL}^2({\tt t}\vec{u})){\tt t}$. We must show that
$$
{\tt P}_{\vec{u},\lambda}(\R\setminus]a,b[)=\begin{cases}
\R\setminus]a,b[&\text{if $\lambda\leq0$,}\\
\R&\text{if $\lambda>0$.}
\end{cases}
$$
Figure \ref{fig:graphs} shows the possible behaviors of ${\tt P}_{\vec{u},\lambda}$. Observe that ${\tt P}_{\vec{u},\lambda}(a)=a$ and ${\tt P}_{\vec{u},\lambda}(b)=b$ because ${\ttL}$ is identically zero on the boundary of $\p$. If $\lambda\leq0$, we have $(1-\lambda{\ttL}^2(t\vec{u}))\geq1$ for all $t\in\R$, so
$$
{\tt P}_{\vec{u},\lambda}(t)\begin{cases}
\geq t&\text{if $t\geq0$,}\\
\leq t&\text{if $t<0$.}
\end{cases}
$$

\begin{figure}[ht]
\begin{center}
\begin{tikzpicture}[scale=1]
{\small
\draw[<->] (2.5,0) -- (2.5,5.5);
\draw[<->] (0,2.5) -- (5.5,2.5);
\draw[dashed] (0,0) to (5.5,5.5);
\draw[line width=1pt] (0.75,0) .. controls (1.2,1.25) and (1.4,1.4) .. (1.5,1.5);
\draw[line width=1pt] (4,4) .. controls (4.2,4.25) and (4.4,4.45) .. (4.5,5.5);
\draw[line width=1pt, dashed] (1.5,1.5) .. controls (1.75,1.75) and (2.25,1.7) .. (2.5,1.625) .. controls (2.75,1.5) and (3.4,1.75) .. (3.5,2.5) .. controls (3.7,3.75) and (3.95,4) .. (4,4);
\draw (1.5,1.5) node{$\bullet$};
\draw (4,4) node{$\bullet$};
\draw (5,4.3) node{${\tt z}={\tt t}$};
\draw (4,1.5) node{${\tt z}={\tt P}_{\vec{u},\lambda}({\tt t})$};
\draw (1.5,2.5) node{\tiny$|$};
\draw (4,2.5) node{\tiny$|$};
\draw (2.5,1.5) node{\tiny$-$};
\draw (2.5,4) node{\tiny$-$};
\draw (1.5,2.8) node{$a$};
\draw (2.2,1.5) node{$a$};
\draw (2.2,4) node{$b$};
\draw (4,2.8) node{$b$};
\draw (4,1) node{$(\lambda\leq0)$};
\draw[<->] (9,0) -- (9,5.5);
\draw[<->] (6.5,2.5) -- (12,2.5);
\draw[line width=1pt] (6.5,5.5) .. controls (7.7,1) and (7.9,1) .. (8,1.5);
\draw[line width=1pt] (10.5,4) .. controls (10.7,4.075) and (10.9,4.15) .. (12,0);
\draw[line width=1pt, dashed] (8,1.5) .. controls (8.25,2.5) and (8.75,2.5) .. (9,1.75) .. controls (9.25,1.25) and (9.9,1) .. (10,2.5) .. controls (10.2,3.75) and (10.45,4) .. (10.5,4);
\draw (1.5,1.5) node{$\bullet$};
\draw (4,4) node{$\bullet$};
\draw (8,1.5) node{$\bullet$};
\draw (10.5,4) node{$\bullet$};
\draw (10.3,1) node{${\tt z}={\tt P}_{\vec{u},\lambda}({\tt t})$};
\draw (8,2.5) node{\tiny$|$};
\draw (10.5,2.5) node{\tiny$|$};
\draw (9,1.5) node{\tiny$-$};
\draw (9,4) node{\tiny$-$};
\draw (8,2.8) node{$a$};
\draw (10.5,2.8) node{$b$};
\draw (8.7,1.5) node{$a$};
\draw (8.7,4) node{$b$};
\draw (10.3,0.5) node{$(\lambda>0)$};}
\end{tikzpicture}
\end{center}
\caption{Possible graphs of ${\tt P}_{\vec{u},\lambda}$ for $\lambda\in\R$}\label{fig:graphs}
\end{figure}

\noindent Consequently, ${\tt P}_{\vec{u},\lambda}(\R\setminus]a,b[)=\R\setminus]a,b[$. If $\lambda>0$ then $\lim_{t\to\pm\infty}(1-\lambda{\ttL}^2(t\vec{u}))=\mp\infty$ for all $t\in\R$. As $a<0<b$, we obtain ${\tt P}_{\vec{u},\lambda}(\R\setminus]a,b[)=\R$, as claimed.

\paragraph{}Summarizing the previous discussion, when $r=m-1$ we are able to represent the set $\R^n\setminus(\Int(p)\times]-\infty,0])$ as a polynomial image of $\R^n\setminus\Int(\pol_{m,\times})$. This will allow us to apply the polynomial map ${\tt F}:\R^n\to\R^n$ that we are about to construct in \ref{ss:const} to obtain
$$
{\tt F}(\R^n\setminus(\Int(\p)\times{]{-\infty},0]}))=\R^n\setminus(\Int(\p)\times{]{-\infty},0[})=\R^n\setminus\Int(\pol).
$$
Observe that $\Int(\p)\times{]{-\infty},0]}=\Int(\pol)\cup\Lambda$, where $\Lambda$ is the interior of the facet $\Ff_m:=H_m\cap\pol$.

\subsubsection{Construction of the polynomial map ${\tt F}$.}\label{ss:const} Set $\x':=(\x_1,\dots,x_{n-1})$ and $\x:=(\x_1,\dots,\x_n)=(\x',\x_n)$. Let ${\tth}_i(\x):={\ttg}_i(\x')-\epsilon_i\x_n$ be a linear equation of $H_i$ for $i=1,\dots,m-1$ where $\epsilon_i=0$ for $1\le i\le r$ (hyperplanes containing vertical facets of $\pol$) and $\epsilon_i=1$ for $r+1\le i\le m-1$ (hyperplanes containing non-vertical facets). Observe that $H_i^+=\{{\tth}_i\geq0\}$ and ${\ttg}_i({\bf0}')>0$ for $i=r+1,\ldots,m-1$ because the origin of $\R^n$ belongs to the interior of $\Ff_m$. For $j=1,\ldots,r$ denote a linear equation of $H_j$ with ${\tth}_j(\x):={\ttg}_j(\x')$ and assume that $H_j^+=\{{\ttg}_j\geq0\}$ for each $j$. We introduce now some auxiliary polynomials that will appear in the polynomial maps we are about to construct. Consider first
\begin{equation}\label{eq:q}
{\tt Q}(\x):=\x_n-\|\x'\|^2-1-
\sum_{i=r+1}^{m-1}\Big(\frac{{\ttg}_i^2(\x')+1}{2}\Big),
\end{equation}
where the value of the last summation term becomes $0$ when $r=m-1$. The polynomial ${\tt Q}(\x)$ has two properties of interest to us. First, \em the region ${\tt Q}(\x)\ge 0$ lies `above' all the hyperplanes containing non-vertical facets of $\pol$ and `above' the hypersurface $x_n=\|x'\|^2+1$\em. In addition, this region is connected and projects onto $\{x_n=0\}$. Second, \em ${\tt Q}(\x)$ is always negative on $\{x_n\le 0\}$\em. Next, we introduce
\begin{equation}\label{eq:g}
{\tt G}(\x):=\Bigr(\prod_{j=1}^r{\tth}_j(\x')
\Bigr)^2\cdot\Bigr(
\prod_{i=r+1}^{m-1}{\tth}_i(\x)\Bigr),
\end{equation}
where the second factor becomes $1$ when $r=m-1$. \em This polynomial function vanishes on the hyperplanes containing the facets of $\pol$ and is positive on its interior\em. In addition, \em vertical facets of $\pol$ does not change the sign of ${\tt G}(\x)$\em. Finally, consider the polynomial
\begin{equation}\label{eq:p}
{\tt P}(\x):=1-{\tt Q}(\x){\tt G}^2(\x),
\end{equation}
which will play a main role in the second part \ref{proof:2dim} of this proof.

\subsubsection{} Define with the aid of \eqref{eq:p}
\begin{equation}\label{eq:F}
{\tt F}_1(\x):=\x'(({\tt P}(\x)-1)^2+{\tt P}^2(\x))\in\R[\x]^{n-1}\quad\text{and}\quad{\tt F}_2(\x)=\x_n{\tt P}^2(\x)\in\R[\x].
\end{equation}
We want to prove the following: \em The map ${\tt F}:\R^n\to\R^n,\ x\mapsto ({\tt F}_1(x),{\tt F}_2(x))$ satisfies \em
$$
\begin{cases}
{\tt F}(\R^n\setminus\Int(\pol_{m,\times}))=\R^n\setminus\Int(\pol)&
\quad\text{if $r<m-1$},\\
{\tt F}(\R^n\setminus(\Int(\pol)\cup\Lambda))
=\R^n\setminus\Int(\pol)&\quad\text{if $r=m-1$}.
\end{cases}
$$
Recall here that $\pol_{m,\times}:=\bigcap_{i=1}^{m-1}H_i^+$.

\subsubsection{Reduction to the $2$-dimensional case} Consider the family of vertical hyperplanes through the origin. This family can be parametrized as follows: \em for each $\vec{u}\in\sph^{n-2}\subset\R^{n-1}$ define $\pi_{\vec{u}}:=\{(\vec{u}y,z):\ (y,z)\in\R^2\}$, which is the plane through the origin generated by the vectors $\vec{u}$ and $\vec{e}_n$\em. Obviously $\R^n=\bigcup_{\vec{u}\in\sph^{n-2}}\pi_{\vec{u}}$ and ${\tt F}(\pi_{\vec{u}})\subset\pi_{\vec{u}}$. Therefore it is enough to check that
$$
\begin{cases}
{\tt F}(\pi_{\vec{u}}\setminus\Int(\pol_{m,\times}))=\pi_{\vec{u}}\setminus\Int(\pol)&
\quad\text{if $r<m-1$},\\
{\tt F}(\pi_{\vec{u}}\setminus(\Int(\pol)\cup\Lambda))
=\pi_{\vec{u}}\setminus\Int(\pol)&\quad\text{if $r=m-1$}.
\end{cases}
$$
for all $\vec{u}\in\sph^{n-2}$. As $\Int(\pol_{m,\times})$ and $\Int(\pol)$ are open subsets of $\R^n$, we have 
$$
\Int(\pol_{m,\times})\cap\pi_{\vec{u}}=\Int(\pol_{m,\times}\cap\pi_{\vec{u}})\quad\text{and}\quad\Int(\pol)\cap\pi_{\vec{u}}=\Int(\pol\cap\pi_{\vec{u}}).
$$
As the origin of $\R^n$ belongs to the interior of ${\Ff}_m$, the intersection $\Ee_{m,\vec{u}}:=\Ff_m\cap\pi_{\vec{u}}$ is an edge of $\pol\cap\pi_{\vec{u}}$ and its interior $\Delta_{\vec{u}}=\Lambda\cap\pi_{\vec{u}}$ contains the origin of $\R^n$. We are reduced to prove that
\begin{equation}\label{eq:red}
\begin{cases}
{\tt F}(\pi_{\vec{u}}\setminus\Int(\pol_{m,\times}\cap\pi_{\vec{u}}))=\pi_{\vec{u}}\setminus\Int(\pol\cap\pi_{\vec{u}})&
\quad\text{if $r<m-1$},\\
{\tt F}(\pi_{\vec{u}}\setminus(\Int(\pol\cap\pi_{\vec{u}})\cup\Delta_{\vec{u}}))
=\pi_{\vec{u}}\setminus\Int(\pol\cap\pi_{\vec{u}})&\quad\text{if $r=m-1$},
\end{cases}
\end{equation}
for all $\vec{u}\in\sph^{n-2}$. Fix now an arbitrary $\vec{u}\in\sph^{n-2}$ and write
\begin{equation}\label{eq:bunch}
\begin{aligned}
&{\ttl}_i(\y,\z):={\tth}_i(\y\vec{u},\z)={\ttg}_i(\y\vec{u})-\epsilon_i\z
={\tts}_i(\y)-\epsilon_i\z\quad\forall\,i=1,\ldots,m,\\
&{\ttq}(\y,\z):={\tt Q}(\y\vec{u},z)=\z-\y^2-1-\sum_{i=r+1}^{m-1}\frac{{\tts}_i^2(\y)+1}{2},\\
&{\tt g}(\y,\z):={\tt G}(\y\vec{u},z)=\Bigr(\prod_{j=1}^r{\tts}_j
(\y)\Bigr)^2\cdot\Bigr(
\prod_{i=r+1}^{m-1}{\ttl}_i(\y,\z)\Bigr),\\
&{\tt p}(\y,\z):={\tt P}(\y\vec{u},\z)=1-{\ttq}(\y,\z){\ttgg}^2(\y,\z),\\
&{\ttf}_1(\y,\z):={\tt F}_1(\y\vec{u},\z)=\y(({\tt p}(\y,\z)-1)^2+{\tt p}^2(\y,\z)),\\
&{\ttf}_2(\y,\z):={\tt F}_2(\y\vec{u},\z)=\z{\tt p}^2(\y,\z).
\end{aligned}
\end{equation}
The linear polynomials $\ttl_i$ can be interpreted as the restrictions to the plane $\pi_{\vec{u}}$ of the linear polynomials $\tth_i$. We have settled in $\pi_{\vec{u}}\equiv\R^2$ coordinates with respect to the vectors $\{\vec{u},\vec{e}_n\}$. Analogously, the functions $\ttq$, $\tt g$, $\tt p$, $\ttf_1$ and $\ttf_2$ can be understood respectively as the restrictions to the plane $\pi_{\vec{u}}$ of the polynomials $\tt Q$, $\tt G$, $\tt P$, ${\tt F}_1$ and ${\tt F}_2$, appearing in \eqref{eq:q} through \eqref{eq:F}.

As ${\tth}_i({\bf{0}})={\ttg}_i({\bf{0}}')>0$, the linear equation ${\tts}_i$ is not identically zero for $i=r+1,\ldots,m-1$. Denote 
\begin{equation}\label{eq:pol}
\p:=\bigcap_{i=1}^m\{{\ttl}_i\geq0\}\subset\R^2,\quad\p_{m,\times}=\bigcap_{i=1}^{m-1}\{{\ttl}_i\geq0\}\subset\R^2, 
\end{equation}
$\Ee_m:=\p\cap\{\z=0\}$ and $\Delta:=\Int(\Ee_m)$. Recall that the origin of $\R^n$ belongs to the interior of $\Ff_m$, the origin of $\R^2$ belongs to $\Delta$ and $\Ee_m$ is an edge of $\p$. Notice that the geometrical objects $\p$, $\p_{m\times}$, $\Ee_m$ and $\Delta$ can be interpreted respectively as the intersections of $\pol$, $\pol_{m,\times}$, $\Ff_m$ and $\Int(\Ff_m)$ with the plane $\pi_{\vec{u}}$. 

\subsubsection{}\label{reductionf}Restating \eqref{eq:red} for the plane $\R^2$, we are left to prove the following result.

\begin{thm}\label{main1dim2}
Consider the polynomial map ${\ttf}:=({\ttf}_1,{\ttf}_2):\R^2\to\R^2$ and let $\Delta:=\Int(\Ee_m)$. Then 
$$
\R^2\setminus\Int(\p)=\begin{cases}
{\ttf}(\R^2\setminus\Int(\p_{m,\times}))&\text{if $r<m-1$,}\\
{\ttf}(\R^2\setminus(\Int(\p)\cap\Delta))&\text{if $r=m-1$.}
\end{cases}
$$
\end{thm}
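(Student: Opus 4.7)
I would verify both inclusions $\ttf(\mathrm{source}) \subseteq \R^2 \setminus \Int(\p)$ and $\R^2 \setminus \Int(\p) \subseteq \ttf(\mathrm{source})$, driven by the following structural facts about ${\tt p}(\y,\z) = 1 - \ttq(\y,\z)\,\ttgg^2(\y,\z)$: the factor $\ttgg$ vanishes precisely on the union of the edge lines $\{\ttl_i = 0\}$ for $i = 1, \ldots, m-1$, so ${\tt p} \equiv 1$ on $\partial\p \setminus \Delta$; the scalar $({\tt p}-1)^2 + {\tt p}^2 = 2{\tt p}^2 - 2{\tt p} + 1$ is always $\ge 1/2$ and equals $1$ exactly when ${\tt p} \in \{0, 1\}$; on $\p \subset \{\z \le 0\}$ the polynomial $\ttq$ is strictly negative, hence ${\tt p} \ge 1$ on $\p$ with equality only on $\partial\p \setminus \Delta$; and the ``upper'' locus $\{\ttq \ge 0\}$ lies strictly above every non-vertical facet line, where $\ttgg$ carries a determined sign and ${\tt p} \le 1$.

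Surjectivity then breaks into three cases. First, on $\partial\p \setminus \Delta$ the map $\ttf$ is the identity, and this set sits inside both source alternatives, so it is covered at once. Second, for a point $(a, 0)$ with $a$ in the relative interior of the top edge $\Delta$, I fix $\y = a$ and invoke the intermediate value theorem on $\z \in [0, +\infty)$: at $\z = 0$ we have $\ttq(a, 0) < 0$ and $\ttgg(a, 0) \neq 0$, so ${\tt p}(a, 0) > 1$, while as $\z \to +\infty$ the product $\ttq(a, \z)\,\ttgg^2(a, \z)$ tends to $+\infty$ (its leading behaviour is controlled by the linear growth of $\ttq$ and the polynomial non-vanishing of $\ttgg^2$), so ${\tt p}(a, \z) \to -\infty$; thus ${\tt p}(a, \z_0) = 0$ for some $\z_0 > 0$, yielding $\ttf(a, \z_0) = (a, 0)$ with $(a, \z_0)$ lying strictly above $\p$ and therefore in the source. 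The endpoints of $\ol{\Delta}$ fall into the first case. Third, for a generic $(a, b) \in \R^2 \setminus \p$, I parametrize preimage candidates by the unknown $t := {\tt p}$: writing $\y = a/(2t^2 - 2t + 1)$ and $\z = b/t^2$ makes $\ttf(\y, \z) = (a, b)$ automatic as soon as ${\tt p}(\y, \z) = t$, which reduces the problem to a single polynomial equation in $t$ whose real solvability follows from the sign and asymptotic information gathered above (examining, in particular, the limits $t \to 1$ and $t \to 0$). The reverse containment is then verified by contradiction: assuming $\ttf(\y, \z) \in \Int(\p)$ for a source point $(\y, \z)$ immediately clashes with the sign constraints on $\ttq$ and $\ttgg^2$.

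\textbf{Main obstacle.} The most delicate step is the last case of surjectivity: one must not merely produce a real root $t$ of the reduced equation, but also verify that the corresponding $(\y, \z)$ lies outside $\Int(\p_{m,\times})$ when $r < m-1$, or outside $\Int(\p) \cup \Delta$ when $r = m-1$. The two cases split along the behaviour of $\ttgg$ near vertical edges, and I expect the bulk of the technical work to be concentrated in this careful sign book-keeping together with confirming that no point of the source escapes into $\Int(\p)$ under $\ttf$.
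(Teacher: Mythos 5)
Your first two cases (the set $\partial\p\setminus\Delta$, where $\ttgg=0$ forces ${\tt p}=1$ and $\ttf=\operatorname{id}$, and the covering of $\Delta$ by an intermediate value argument in $\z$ producing a zero of ${\tt p}$ above the polygon) coincide with what the paper does for the line $\{\z=0\}$ (its Lemma on the odd-degree polynomial ${\tt p}(y_0,\cdot)$). The genuine gap is exactly where you flag your ``main obstacle'', and it is not a book-keeping issue but the mathematical core of the theorem. Your substitution $t:={\tt p}$, $\y=a/(2t^2-2t+1)$, $\z=b/t^2$ produces, at best, \emph{some} real preimage of $(a,b)$, but gives no control on where that preimage sits. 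For targets $(a,b)$ with $b=\lambda>0$ one can rescue it: any root necessarily has ${\tt p}<0$, hence $\ttq\ttgg^2>1$, hence the preimage lies in $\Qq=\{\ttq>0\}$, which is disjoint from $\p_{m,\times}$ — but you do not make this observation, and it is needed because for $\lambda>0$ the source excludes all of $\Int(\p_{m,\times})\cap\{\z>0\}$ (the region above the polygon between the vertical walls), not just $\Int(\p)$. For targets with $\lambda<0$ the rescue fails: a root $t^*>0$ of your reduced equation may perfectly well correspond to a point of $\Int(\p)$ (note that $\ttf$ does \emph{not} map $\Int(\p)$ into $\p$ in general, so interior points can also be preimages of exterior targets), and nothing in the sign data you collect rules this out or produces an alternative admissible root.

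This is precisely why the paper abandons pointwise preimage-hunting for $\lambda<0$ and instead studies the whole level curve $X_\lambda=\{\ttf_2=\lambda\}$: it proves (Lemma \ref{lem2}) via Janiszewski's separation theorem that $X_\lambda$, which is trapped in the band $\{\lambda-1<\z<0\}$, has an unbounded connected branch $\psi_\lambda$ with $\psi_{1,\lambda}(t)\to\pm\infty$; since $\ttf$ is the identity on $\partial\p$, the two unbounded ends of $\psi_\lambda$ outside $\Int(\p)$ are forced to map onto the two rays of $\{\z=\lambda\}\setminus\Int(\p)$, with the crossing points of $\partial\p$ landing exactly on the endpoints $c_\lambda,d_\lambda$. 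This global connectedness argument (or some substitute for it) is the missing ingredient; no single-variable intermediate value argument on your reduced equation can supply it, because the issue is not the existence of a root but the location of the corresponding point relative to $\Int(\p)$. Your reverse inclusion sketch is essentially right but also not ``immediate'': the paper needs the specific normalization $-\vec e_2\in\conv{\p}{}$ and $(0,0)\in\p$ to write each supporting line as $a\y-\beta^2\z+\gamma^2$ and conclude $\ttl(\ttf(y_0,z_0))\le0$ for $z_0<0$.
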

Once this is shown the proof of Theorem \ref{main1} will be concluded.\qed

\subsection{Proof of Theorem \ref{main1dim2}}\label{proof:2dim}

For the sake of clearness we continue to denote in this section the coordinates of $\R^2$ with $(\y,\z)$. We keep the notation and equations introduced in \eqref{eq:bunch} and \eqref{eq:pol}. Note that the polynomials $\ttl_k$ can be written as follows: 
$$
{\ttl}_k(\y,\z)={\tts}_k(\y)-\sigma_i\z:=\begin{cases}
{\tts}_k(\y):=c_k(\y-b_k)&\text{if $k=1,\ldots,r$,}\\
{\tts}_k(\y)-\z:=a_k\y+b_k-\z&\text{if $k=r+1,\ldots,m-1$,}\\
-\z&\text{if $k=m$},
\end{cases}
$$
where $\tts_k$ is not identically zero for $k=1,\ldots,m-1$. Recall the following assumptions: \em The line $\{{\ttl}_m=0\}$ contains the edge $\Ee_m$ of $\p$, the origin of $\R^2$ is contained in the interior of $\Ee_m$ and $-\vec{e}_2\equiv(0,-1)\in\conv{\p}{}$\em. The reader should have in mind Figure~\ref{im:poly}, which sketches the behavior of the polynomial map $\ttf$ and helps to understand how it acts on $\R^2$.

\begin{figure}[ht]
\begin{minipage}[b]{0.32\linewidth}
\centering
\begin{tikzpicture}[scale=0.37,baseline=(current bounding box.north)]
\draw[top color=gray!50,bottom color=gray!20,fill opacity=1](0.5,3.5) to (0.5,16) to (14,16) to (14,3.5) to (10,3.5) to (5,8.5) to (5,3.5) to (0.5,3.5);
\draw[line width=0.5pt,dashed,->] (5,16) to (5,3.5);
\draw[line width=0.5pt,dashed,<-] (10,3.5) to (0.5,13);
\draw[line width=1] (0.5,13.5) parabola bend (7.5,10.5) (14,14.5);
\draw[line width=1.5pt,dashed] (0.5,14)..controls (1.8,12.9).. (2,12.8)..controls (3,12.2) and (3.6,12.7).. (4.5,15.7)to(4.6,16);
\draw[line width=1.5pt,dashed] (5.4,16)..controls (7.5,10)and(10,10.5)..(14,15.2);
\draw [line width=1.5pt,dashed] plot [smooth, tension=0.8] coordinates { (0.5,5) (5,4.5) (7.5,5.5) (7.2,3.8) (9,4.5) (14,5.5)};
\draw[fill=white,draw=none,opacity=0.7](10,3.5) to (5,8.5) to (5,3.5);
\draw[very thick,fill=gray!20,opacity=0.5](10,3.5) to (5,8.5) to (5,3.5);
\draw[<->] (0.5,6) -- (14,6);
\draw[<->] (6.5,3.5) -- (6.5,16);
\draw (9.2,12.6) node{{\tiny $X_\lambda,\ \lambda>0$}};
\draw (12,4.2) node{{\tiny $X_\lambda,\ \lambda<0$}};
\draw (9.5,9.8) node{{\tiny $\partial \Qq:=\{{\ttq}=0\}$}};
\draw (3.8,5.3) node{{$\p_{m,\times}$}};
\end{tikzpicture}
\vskip 0.1cm
\begin{tikzpicture}[scale=0.37,baseline=(current bounding box.north)]
\draw[white,->] (-1.5,0) -- (-1.5,2);
\draw[very thick,->] (0,2.5) -- (0,0);
\draw (0.8,1.25) node{$f$};
\end{tikzpicture}
\vskip 0.1cm
\begin{tikzpicture}[scale=0.37,baseline=(current bounding box.north)]
\draw[top color=gray!50,bottom color=gray!20,fill opacity=1](0.5,3.5) to (0.5,16) to (14,16) to (14,3.5) to (10,3.5) to (7.5,6) to (5,6) to (5,3.5) to (0.5,3.5);
\draw[line width=0.5pt,dashed,->] (5,16) to (5,3.5);
\draw[line width=0.5pt,dashed,<-] (10,3.5) to (0.5,13);
\draw[line width=1] (0.5,13.5) parabola bend (7.5,10.5) (14,14.5);
\draw[line width=1.5pt,dashed] (0.5,7.5) to (4.8,7.5);
\draw[line width=1.5pt,dashed] (5.2,7.5) to (14,7.5);
\draw[line width=1.5pt,dashed] (0.5,4.5) to (14,4.5);
\draw[fill=white,draw=none,opacity=0.7](10,3.5) to (7.5,6) to (5,6) to (5,3.5);
\draw[very thick,fill=gray!20,opacity=0.5](10,3.5) to (7.5,6) to (5,6) to (5,3.5);
\draw[<->] (0.5,6) -- (14,6);
\draw[<->] (6.5,3.5) -- (6.5,16);
\draw (11.5,8) node{{\tiny $f(X_\lambda),\ \lambda>0$}};
\draw (11.5,5) node{{\tiny $f(X_\lambda),\ \lambda<0$}};
\draw (9.5,9.8) node{{\tiny $f(\partial \Qq)=\partial\Qq$}};
\draw (4.4,5.3) node{$\p$};
\draw[fill=none,opacity=1] (5,7.5)circle(0.2);
\end{tikzpicture}
\vskip 0.35cm
\caption*{}
\end{minipage}
\hspace{0.5cm}
\begin{minipage}[b]{0.63\linewidth}
\centering
\begin{tikzpicture}[yscale=0.7,xscale=0.7,baseline=(current bounding box.north)]
\draw[thick,fill=gray!20,opacity=0.5,<->](5,0.5) to (5,8.5) to (13,0.5);
\draw[top color=gray!50,bottom color=gray!20,fill opacity=1]
(1.5,0.5) to (1.5,13)..controls (1.5,13.7) and (1.8,13.4).. (2,13.25)..controls (3,12.5) and (3.6,13).. (4.5,16) to (5.5,16)..controls (7.5,11.5)and(10,12)..(13.25,15.25)..controls(13.35,15.4)and(13.5,15.4)..(13.5,14.5)to(13.5,0.5)to(13,0.5)to(5,8.5)to(5,0.5)to(1.5,0.5);
\draw[<->] (0.5,6) -- (14.5,6);
\draw[<->] (6.5,0) -- (6.5,16);
\draw[line width=1.5pt] (0.5,13.5) parabola bend (7.5,10.5) (14.5,15);
 (5.4,16)..controls (7.5,10)and(10,10.5)..(14.5,15.8);
\draw[line width=1pt,dashed] (5,16) to (5,8.5);
\draw[line width=1pt,dashed] (5,8.5) to (0.5,13);
\draw[<-,bend left=-25, line width=1pt] (3,1.25) to (4.5,2);
\draw[<-,bend left=-25, line width=1pt] (3,2.25) to (4.5,3);
\draw[<-,bend left=-25, line width=1pt] (3,3.25) to (4.5,4);
\draw[<-,bend left=-25, line width=1pt] (3,4.25) to (4.5,5);
\draw[->,bend left=-20, line width=1pt] (12,2) to (13.4,1.5);
\draw[->,bend left=-20, line width=1pt] (11,3) to (12.4,2.5);
\draw[->,bend left=-20, line width=1pt] (10,4) to (11.4,3.5);
\draw[->,bend left=-20, line width=1pt] (9,5) to (10.4,4.5);
\draw[top color=gray!80,bottom color=gray!50,fill opacity=0.50]
(2,13.25)..controls (3,12.5) and (3.6,13).. (4.5,16)to(4.8,16)to(4.8,6.3) ..controls(4.77,6)..(4.6,6)to(2.1,6)..controls(2,6)and(1.85,6)..(1.8,6.3)to(1.8,13.2)..controls(1.8,13.4)and(1.5,13.7)..(1.5,13)..controls(1.5,13.7)and(1.8,13.4)..(2,13.25);
\draw[top color=gray!80,bottom color=gray!50,fill opacity=0.25] (5.2,16) to (5.2,6.5) .. controls (5.21,6) and (5.23,6) .. (5.4,6) to (13,6) ..controls(13.1,6) and (13.2,6.1)..(13.2,6.4)to (13.2,15) to (13.25,15.25) ..controls(13.35,15.4)and(13.5,15.4).. (13.5,14.5)..controls(13.5,15.4)and(13.35,15.4)..(13.25,15.25)..controls(10,12)and(7.5,11.5)..(5.5,16) to (4.5,16);
\draw[top color=gray!40,bottom color=gray!60,fill opacity=0.85] (2,6) .. controls (2.1,6.0) and (2.28,6) .. (2.3,6.3) to (2.3,15.5) to (4.5,15.5) to (4.5,6.5) .. controls (4.51,6.2) and (4.52,6) .. (4.6,6);
\draw[top color=gray!40,bottom color=gray!60,fill opacity=0.85] (5.4,15.5) to (12.8,15.5) to (12.8,6.5) .. controls (12.8,6.1) and (12.9,6) .. (13,6) to (5.3,6) .. controls (5.4,6) and (5.4,6) .. (5.4,8) to (5.4,15.5);
\draw[color=white,very thick](4.4,16)to(5.6,16)to(5.6,16.1)to(4.4,16.1);
\draw (7.5,3.5) node{{\Large$\p$}} (10,8.5) node{{\large $f(\p_{m,\times})$}};
\end{tikzpicture}
\caption*{}
\end{minipage}
\vskip -1cm
\caption{A sketch of the behavior of the map ${\ttf}:\R^2\to\R^2$.}
\label{im:poly}
\end{figure}

\subsubsection{Some properties of the region $\Qq:=\{\ttq>0\}$.}\label{ss:regq}
It is interesting for us to take into account certain properties of $\Qq$:

(i) $\Qq\subset\{\z-\y^2-1>0\}\subset\{\z-|\y|>0\}\subset\{\z>0\}=\{{\ttl}_m<0\}$.

(ii) $\Qq\subset\bigcap_{i=r+1}^m\{{\ttl}_i<0\}$. In particular $\p\cap\Qq=\varnothing$ and $\p_{m,\times}\cap\Qq=\varnothing$ if $r<m-1$.

Assume $r<m-1$ and fix $i_0\in\{r+1,\ldots,m-1\}$ and $(y_0,z_0)\in\Qq$. We have ${\ttq}(y_0,z_0)>0$, so
$$
z_0>y_0^2+1+\sum_{i=r+1}^{m-1}\Big(\frac{\tts_i^2(y_0)+1}{2}\Big)\geq\frac{\tts_{i_0}^2(y_0)+1}{2}\geq \tts_{i_0}(y_0).
$$
Therefore, ${\ttl}_{i_0}(y_0,z_0)=\tts_{i_0}(y_0)-z_0<0$ and $(y_0,z_0)\in\{{\ttl}_{i_0}<0\}$.

(iii) Let $M>0$ be such that $1+|a_i|+|b_i|<M$ for $i=r+1,\ldots,m-1$. Then 
$$
\Qq\subset\bigcap_{i=r+1}^{m-1}\{M\z-|\tts_i(\y)-\z|>0\}.
$$
Fix $i\in\{r+1,\ldots,m-1\}$ and $(y_0,z_0)\in\Qq$. By (i) we have $|y_0|<z_0=|z_0|$ and $z_0>1$. Thus,
$$
|\tts_i(y_0)-z_0|=|a_iy_0+b_i-z_0|\leq|a_i||y_0|+|b_i|+|z_0|\leq|a_i||z_0|+|b_i||z_0|+|z_0|<Mz_0,
$$
as required.
\qed

\begin{lem}\label{lem0}
For each $y_0\in U:=\R\setminus\{b_1,\ldots,b_r\}$ there exists 
$$
z_1>z_0:=y_0^2+1+\sum_{i=r+1}^{m-1}
\frac{{\tts}_i^2(y_0)+1}{2}
$$ 
such that ${\tt p}(y_0,z_1)=0$. In particular, $(y_0,z_1)\in\Qq$.
\end{lem}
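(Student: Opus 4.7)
\textbf{Proof plan for Lemma \ref{lem0}.} Fix $y_0\in U$ and consider the univariate polynomial
\[
\varphi(\z):={\ttq}(y_0,\z)\,{\ttgg}^2(y_0,\z)\in\R[\z],
\]
so that ${\tt p}(y_0,\z)=1-\varphi(\z)$. The goal becomes producing $z_1>z_0$ with $\varphi(z_1)=1$, which will be achieved by a direct intermediate value argument.

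First I would compute both factors of $\varphi$ explicitly. From the definition \eqref{eq:bunch} of $\ttq$, the choice $z_0=y_0^2+1+\sum_{i=r+1}^{m-1}\frac{{\tts}_i^2(y_0)+1}{2}$ gives ${\ttq}(y_0,\z)=\z-z_0$; in particular $\ttq(y_0,z_0)=0$ and $\ttq(y_0,\z)>0$ for $\z>z_0$. From the definition of $\ttgg$ we have
\[
{\ttgg}(y_0,\z)=C\cdot\prod_{i=r+1}^{m-1}\bigl({\tts}_i(y_0)-\z\bigr),\qquad C:=\prod_{j=1}^r{\tts}_j^2(y_0).
\]
The hypothesis $y_0\notin\{b_1,\ldots,b_r\}$ ensures ${\tts}_j(y_0)=c_j(y_0-b_j)\ne 0$ for $j=1,\ldots,r$, so $C>0$ (with the usual convention that the empty product is $1$ when $r=0$). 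Therefore
\[
\varphi(\z)=C^2\,(\z-z_0)\prod_{i=r+1}^{m-1}\bigl({\tts}_i(y_0)-\z\bigr)^2
\]
is a polynomial in $\z$ of degree $1+2(m-1-r)\ge 1$ with positive leading coefficient (when $r=m-1$ the product is empty and $\varphi(\z)=C^2(\z-z_0)$).

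Now the conclusion is immediate. At $\z=z_0$ we have $\varphi(z_0)=0<1$, while $\varphi(\z)\to+\infty$ as $\z\to+\infty$ since the leading coefficient is positive. By continuity and the intermediate value theorem there exists $z_1>z_0$ with $\varphi(z_1)=1$, i.e.\ ${\tt p}(y_0,z_1)=0$. Finally, ${\ttq}(y_0,z_1)=z_1-z_0>0$, so $(y_0,z_1)\in\Qq=\{\ttq>0\}$, as required.

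The only subtlety, and the reason $U$ excludes the points $b_1,\ldots,b_r$, is that if $y_0=b_{j_0}$ for some $j_0\le r$ then ${\tts}_{j_0}(y_0)=0$ makes $\ttgg(y_0,\z)$ identically zero in $\z$, so $\varphi\equiv 0$ and ${\tt p}(y_0,\z)\equiv 1$ never vanishes; the nondegeneracy $C>0$ is thus the single nontrivial ingredient, and no further obstacle arises.
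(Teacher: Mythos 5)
Your proof is correct and follows essentially the same route as the paper: both observe that ${\ttq}(y_0,z_0)=0$, that ${\tt p}(y_0,\z)$ is an odd-degree polynomial in $\z$ tending to $-\infty$ (equivalently, your $\varphi$ has positive leading coefficient, which requires exactly the nondegeneracy $y_0\notin\{b_1,\ldots,b_r\}$), and conclude by the intermediate value theorem. Your version merely makes explicit the computation ${\ttq}(y_0,\z)=\z-z_0$ and the role of $U$, which the paper leaves implicit.
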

\begin{proof}
Consider the odd degree polynomial 
$$
{\tt p}_{y_0}(\z):={\tt p}(y_0,\z)=1-{\ttq}(y_0,\z){\ttgg}^2(y_0,\z).
$$ 
and observe that ${\ttq}(y_0,z_0)=0$. As $\lim_{z\to+\infty}{\tt p}_{y_0}(\z)={-\infty}$ and ${\tt p}_{y_0}(z_0)=1$, there exists $z_1>z_0$ such that ${\tt p}_{y_0}(z_1)=0$, as required.
\end{proof}

\subsubsection{Level curves of the polynomial ${\ttf}_2$.} We need to understand the level curves of ${\ttf}_2$. For each $\lambda\in\R$ consider the plane algebraic curve 
$$
X_\lambda:=\{{\ttf}_2=\lambda\}.
$$
The understanding of these algebraic curves will help us to prove later the equality ${\tt f}(X_\lambda\setminus\Int(\p_{m,\times}))=\{\z=\lambda\}\setminus\Int(\p)$, which essentially represents the core of the proof. The following two results provide semialgebraic parametrizations of (a part of) $X_\lambda$. The properties of these parametrizations depend strongly on the sign of $\lambda$.

\begin{lem}\label{lem1}
Assume $\lambda>0$ and define $\varphi_\lambda(y):=\max\{z\in\R:\,(y,z)\in X_\lambda\}$ for each $y\in U:=\R\setminus\{b_1,\ldots,b_r\}$. Then
\begin{itemize}
\item[(i)] $\varphi_\lambda:U\to\R$ is a Nash function and its graph $\Gamma_\lambda$ is contained in $X_\lambda\cap \Qq$.
\item[(ii)] $\lim_{y\to b_j}\varphi_\lambda(y)=+\infty$ for all $j=1,\ldots,r$.
\end{itemize}
\end{lem}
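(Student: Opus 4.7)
The plan is to deduce both parts from the fact that, for each $y_0\in U$, the one-variable polynomial $\ttf_2(y_0,\cdot)$ is strictly increasing on a half-line on which $\varphi_\lambda(y_0)$ must lie. First I would observe that ${\tt p}(y_0,\cdot)$ has odd degree in $z$ with negative leading coefficient $-(\prod_{j=1}^r\tts_j(y_0))^4$, hence ${\tt p}(y_0,z)\to-\infty$ as $z\to+\infty$; since moreover ${\tt p}(y_0,z_0)=1$ for $z_0:=y_0^2+1+\sum_{i=r+1}^{m-1}(\tts_i(y_0)^2+1)/2$, Lemma~\ref{lem0} combined with intermediate values produces a largest real zero $z_1(y_0)>z_0$ of ${\tt p}(y_0,\cdot)$, and ${\tt p}(y_0,z)<0$ for $z>z_1(y_0)$. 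Consequently $\ttf_2(y_0,z_1(y_0))=0$, while the leading coefficient $(\prod_j\tts_j(y_0))^8$ of $\ttf_2(y_0,\cdot)$ in $z$ is positive, so $\ttf_2(y_0,z)\to+\infty$.

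The decisive computation is $\partial\ttf_2/\partial z>0$ on $\Qq\cap\{{\tt p}<0\}$. Logarithmic differentiation of $\ttgg=(\prod_j\tts_j)^2\prod_{i=r+1}^{m-1}(\tts_i-z)$ together with $\partial\ttq/\partial z=1$ yields
\[
\frac{\partial{\tt p}}{\partial z}=-\ttgg^2\Bigl(1+2\ttq\sum_{i=r+1}^{m-1}\frac{1}{z-\tts_i(y)}\Bigr).
\]
On $\Qq$ one has $\ttq>0$ and $z>z_0(y)\geq(\tts_i(y)^2+1)/2\geq \tts_i(y)$ for each $i$, so every summand is positive and the parenthesis is $\geq 1$; hence $\partial{\tt p}/\partial z\leq 0$ throughout $\Qq$. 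On the smaller region $\Qq\cap\{{\tt p}<0\}$ the relation $\ttq\ttgg^2>1$ forces $\ttgg\neq 0$ and the inequality becomes strict; combined with $z>0$ and ${\tt p}<0$ this gives $\partial\ttf_2/\partial z={\tt p}^2+2z{\tt p}\,\partial{\tt p}/\partial z>0$. Thus $\ttf_2(y_0,\cdot)$ is strictly increasing on $(z_1(y_0),+\infty)$ from $0$ to $+\infty$, meeting $\lambda>0$ at a unique point which is automatically the largest real solution of $\ttf_2(y_0,z)=\lambda$. Therefore $\varphi_\lambda(y_0)>z_1(y_0)>z_0$, so $(y_0,\varphi_\lambda(y_0))\in X_\lambda\cap\Qq$ and $\Gamma_\lambda\subset X_\lambda\cap\Qq$, giving the inclusion in (i).

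For the Nash claim, the positivity of $\partial\ttf_2/\partial z$ at $(y_0,\varphi_\lambda(y_0))$ lets the implicit function theorem realize $\varphi_\lambda$ as a real-analytic function of $y$ near each $y_0\in U$; semi-algebraicity follows from Tarski--Seidenberg (the selection rule ``largest $z$ with $\ttf_2(y,z)=\lambda$'' is first-order), and a real-analytic semi-algebraic function is Nash. For (ii), I would show $z_1(y)\to+\infty$ as $y\to b_j$ and use $\varphi_\lambda(y)>z_1(y)$: because $\tts_j(b_j)=0$, the factor $\tts_j(y)^4$ in $\ttgg(y,z)^2$ forces ${\tt p}(y,z)\to 1$ uniformly on compact subsets of $z$ as $y\to b_j$, so no zero of ${\tt p}(y,\cdot)$ can remain in a compact set. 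I expect the chief obstacle to be the sign analysis of $\partial{\tt p}/\partial z$ on $\Qq$, which is exactly where the design of $\ttq$ in \eqref{eq:q}, ensuring $z>\tts_i(y)$ throughout $\Qq$, does the heavy lifting.
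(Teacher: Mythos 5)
Your proof is correct and runs on the same engine as the paper's: the sign of the $\z$-derivative of $\ttq\,{\ttgg}^2$ on the region $\Qq$ (your factored formula for $\partial{\tt p}/\partial\z$ is exactly the paper's expansion of $\partial(\ttq{\ttgg}^2)/\partial\z$, just written with the logarithmic derivative of $\ttgg$), Lemma~\ref{lem0}, the implicit function theorem plus first-order definability for the Nash claim, and the chain $\varphi_\lambda(y)>z_1(y)\to+\infty$ for (ii). Two sub-steps are executed differently, both defensibly. First, where the paper identifies the local implicit-function branch with $\varphi_\lambda$ by a compactness/contradiction argument, you get it from uniqueness: since $\partial{\tt p}/\partial\z\le0$ on all of $\Qq$ and ${\tt p}=1$ on $\{\ttq=0\}$, the fiber of $\Qq\cap\{{\tt p}<0\}$ over each $y\in U$ is exactly $]z_1(y),+\infty[$ with $z_1(y)$ the largest zero of ${\tt p}(y,\cdot)$, and there $\ttf_2(y,\cdot)$ is strictly increasing, so it meets $\lambda$ exactly once; the implicit branch lives in this open set and therefore has no other solution to jump to. Do write that sentence out explicitly --- it is the one place where your sketch leaves the identification of the IFT branch with $\varphi_\lambda$ to the reader, and it is precisely the point the paper labors over. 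Second, for (ii) you replace the paper's explicit lower bound for $z_1$ (derived from $1=\ttq{\ttgg}^2$ at the zero via the estimate in \ref{ss:regq}(iii)) by the soft observation that the factor $\tts_j^4(y)$ forces ${\tt p}(y,\cdot)\to1$ uniformly on compact $\z$-sets as $y\to b_j$, so the largest zero (which is bounded below by $z_0(y)\ge1$) must escape to $+\infty$; this is shorter and equally rigorous, merely forgoing the quantitative rate, which is not used elsewhere.
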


\begin{lem}\label{lem2}
Assume $\lambda<0$ and let $\pi:\R^2\to\R,\ (y,z)\mapsto y$. Then $X_\lambda\subset\R\times\,]\lambda-1,0[$ and there exists a continuous semialgebraic map $\psi_\lambda:=(\psi_{1,\lambda},\psi_{2,\lambda}):\R\to\R^2$ such that $\im(\psi_\lambda)\subset X_\lambda$ and $\lim_{t\to\pm\infty}\psi_{1,\lambda}(t)=\pm\infty$. In particular, $\pi(\im(\psi_\lambda))=\R$.
\end{lem}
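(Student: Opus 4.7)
My plan is to verify the two conclusions of the lemma in sequence. For the inclusion $X_\lambda\subset\R\times{]\lambda-1,0[}$, note that ${\ttf}_2(y,z)=z\,{\tt p}^2(y,z)=\lambda<0$ together with ${\tt p}^2\ge 0$ forces $z<0$. On $\{z\le 0\}$, each summand in the definition of ${\ttq}$ is non-positive (and the constant term $-1$ is strictly negative), so ${\ttq}\le -1$, whence ${\tt p}=1-{\ttq}\,{\ttgg}^2\ge 1+{\ttgg}^2\ge 1$ on the same region (this is essentially property (i) of Subsection~\ref{ss:regq} applied with opposite sign). Plugging $z=\lambda/{\tt p}^2$ into this bound yields $z\ge\lambda>\lambda-1$, establishing the asserted strip.

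The construction of $\psi_\lambda$ then splits into two steps. Step~1 (non-empty fibers): for each fixed $y\in\R$ the continuous function $z\mapsto {\ttf}_2(y,z)$ on $[\lambda,0]$ satisfies ${\ttf}_2(y,0)=0>\lambda$ and ${\ttf}_2(y,\lambda)=\lambda\,{\tt p}^2(y,\lambda)\le\lambda$ (using again ${\tt p}^2\ge 1$), so the intermediate value theorem supplies some $z\in{[\lambda,0[}$ with ${\ttf}_2(y,z)=\lambda$. Combined with the $z$-boundedness just proved, this makes $\pi:X_\lambda\to\R$ a surjective proper semialgebraic map with finite fibers.

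Step~2 (extraction of a transcontinental branch): apply semialgebraic cell decomposition / Hardt triviality to $\pi:X_\lambda\to\R$. One obtains finitely many exceptional abscissae $y_1<\cdots<y_k$ outside which $X_\lambda$ is a disjoint union of graphs of finitely many continuous semialgebraic functions over each component of $\R\setminus\{y_1,\ldots,y_k\}$. Asymptotically, as $|y|\to\infty$ the bound ${\tt p}(y,z)\ge 1+{\ttgg}^2(y,z)$ combined with the positive $y$-degree of ${\ttgg}$ (for $y$ outside the finitely many values $b_j$) shows that ${\tt p}(y,z)\to+\infty$ uniformly for $z\in[\lambda,0]$; hence every solution of ${\ttf}_2(y,z)=\lambda$ satisfies $z=\lambda/{\tt p}^2\to 0^-$, i.e.\ all branches of $X_\lambda$ pinch together at the horizontal line $z=0$ at both $y$-ends. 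Following any such asymptotic branch inwards and noting that every collision between branches takes place at a point belonging to both (so the connected component is preserved throughout) exhibits a connected component $C$ of $X_\lambda$ with $\pi(C)=\R$. Since $C$ is a 1-dimensional, connected, closed semialgebraic subset of $\R^2$ with $\pi(C)$ unbounded in both directions and with vertical fibers bounded by the strip, $C$ is semialgebraically homeomorphic to $\R$; any such homeomorphism $\psi_\lambda:\R\to C$ gives the required continuous semialgebraic parametrization, and the asymptotic behaviour $\psi_{1,\lambda}(t)\to\pm\infty$ as $t\to\pm\infty$ follows from $\pi(C)=\R$ together with the boundedness of $C$ in the $z$-direction.

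The main obstacle is the claim in Step~2 that a single connected component of $X_\lambda$ projects onto all of $\R$. Particular care is required at the exceptional abscissae $b_j$ where ${\tts}_j(b_j)=0$: there ${\ttgg}(b_j,z)$ vanishes identically in $z$, so ${\tt p}(b_j,z)\equiv 1$ and the fiber ${\ttf}_2(b_j,\cdot)^{-1}(\lambda)$ collapses to the singleton $\{(b_j,\lambda)\}$. Verifying that this local collapse does not sever the transcontinental component, but instead just pinches it through the point $(b_j,\lambda)$, is the technical heart of the argument.
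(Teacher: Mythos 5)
Your handling of the two easy parts is essentially the paper's: the strip inclusion follows from ${\tt p}\ge 1$ on $\{z\le 0\}$, and the surjectivity of $\pi|_{X_\lambda}$ from the intermediate value theorem on each vertical fiber. The genuine gap is in Step 2, at exactly the point you yourself flag as ``the main obstacle.'' Nothing in your argument rules out the scenario in which $X_\lambda$ has several connected components whose projections are proper closed subintervals of $\R$ that only \emph{jointly} cover $\R$ --- say one component projecting onto $]{-\infty},c]$ and another onto $[c',+\infty[$ with $c'<c$, the two being disjoint, as distinct components must be. The phrase ``every collision between branches takes place at a point belonging to both'' begs the question: the danger is precisely that there is \emph{no} collision. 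Two branches can both pinch towards $z=0^-$ as $|y|\to\infty$ without ever meeting, and a branch followed ``inwards'' from $y=-\infty$ may end at a singular point from which every other emanating branch returns to $y=-\infty$. The uniform pinching you establish, and the collapse of the fiber over $b_j$ to the singleton $(b_j,\lambda)$, are both correct observations, but neither addresses this. The ingredient you never use --- and which is the engine of the paper's proof --- is that $X_\lambda={\ttf}_2^{-1}(\lambda)$ \emph{separates} the top edge $S_1:=[-M,M]\times\{0\}$ (where ${\ttf}_2=0>\lambda$) from the bottom edge $S_2:=[-M,M]\times\{\lambda-1\}$ (where ${\ttf}_2<\lambda$) of the rectangle $[-M,M]\times[\lambda-1,0]$. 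The paper combines this with Janiszewski's theorem: if no single component of $X_\lambda$ joined the left edge to the right edge, one could split $X_\lambda$ intersected with the rectangle into two compact pieces, each of which fails to separate two suitably chosen points near $S_1$ and $S_2$ and whose intersection is connected, so their union would also fail to separate them --- contradicting the level-set separation. Some topological argument of this kind is indispensable and is missing from your proposal.

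A secondary, more easily repaired, inaccuracy: a connected, closed, one-dimensional semialgebraic subset of $\R^2$ with unbounded projection need not be semialgebraically homeomorphic to $\R$ (it can contain loops and branch points), so your final sentence overclaims. The lemma only asks for a continuous semialgebraic map, and the paper produces one by concatenating the two unbounded Nash branches $N_1$, $N_2$ over $]{-\infty},-M[$ and $]M,+\infty[$ (on which $\pi$ restricts to a diffeomorphism) with a continuous semialgebraic path $\alpha$ inside the spanning component joining a point of $N_1$ to a point of $N_2$; injectivity is never claimed nor needed.
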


\renewcommand{\theparagraph}{\thesubsubsection.\arabic{paragraph}}
\setcounter{secnumdepth}{5}

\subsubsection{Proof of Lemma \em \ref{lem1}}
Note first that: \em the function $\varphi_\lambda$ is well-defined on $U$\em. 

Fix $y_0\in U$ and observe that ${\ttq}(y_0,\z)$ is a polynomial of degree one. Thus, ${\tt p}(y_0,\z)$ is a polynomial of degree $\geq1$ and ${\ttf}_2(y_0,\z)$ is a polynomial of odd degree. Consequently, the set $\{{\ttf}_2(y_0,z)=b\}\subset\R$ is non-empty and finite, so the value $\varphi_\lambda(y_0)$ exists.
\vskip 2mm

\noindent(i) The proof is conducted in several steps:

\paragraph{}\label{derqg2}We claim: \em The partial derivative $\frac{\partial({\ttq}{\ttgg}^2)}{\partial\z}$ is strictly positive on $\Qq_0:=\{{\ttq}>0\}\cap(U\times\R)$\em. 

We have 
\begin{equation*}
\begin{split}
\frac{\partial({\ttq}{\ttgg}^2)}{\partial\z}&=\frac{\partial{\ttq}}{\partial\z}{\ttgg}^2+2{\ttq}{\ttgg}\frac{\partial{\ttgg}}{\partial\z}\\
&={\ttgg}^2+2{\ttq}\Bigr(\prod_{j=1}^r{\ttl}_j(\y,\z)\Bigr)^4
\Bigr(\prod_{i=r+1}^{m-1}{\ttl}_i(\y,\z)\Bigr)
\Bigr(-\sum_{k=r+1}^{m-1}\prod_{i\neq k}{\ttl}_i(\y,\z)\Bigr)\\
&={\ttgg}^2+2{\ttq}\Bigr(\prod_{j=1}^r{\ttl}_j(\y,\z)\Bigr)^4
\Bigr(\sum_{k=r+1}^{m-1}(\z-\tts_i(\y))\prod_{i\neq k}{\ttl}_i(\y,\z)^2\Bigr),
\end{split}
\end{equation*}
and by \ref{ss:regq}(ii) the previous equation is strictly positive on $\Qq_0$.

\paragraph{}\label{pert}Fix $(y_0,\varphi_\lambda(y_0))\in\Gamma_\lambda$ and denote 
$$
z_0:=y_0^2+1+\sum_{i=r+1}^{m-1}\Big(\frac{\tts_i^2(y_0)+1}{2}\Big).
$$
We claim: \em There exists $z_0<z_1<z_2$ such that ${\tt p}(y_0,z_1)=0$ and ${\ttf}_2(y_0,z_2)=\lambda$. Besides, the graph $\Gamma_\lambda$ is contained in $\Qq_0$ and ${\tt p}$ is strictly negative on $\Gamma_\lambda$\em. 

Write $\Qq\cap\{y=y_0\}=\{(y_0,z):\,z>z_0\}$. By Lemma \ref{lem0} there exists $z_1>z_0$ such that ${\tt p}(y_0,z_1)=0$, so ${\ttf}_2(y_0,z_1)=0$. As $\lambda>0$ and $\lim_{z\to+\infty}{\ttf}_2(y_0,z)=+\infty$ (because it is an odd degree polynomial with positive leading coefficient), there exists $z_2>z_1>z_0$ such that ${\ttf}_2(y_0,z_2)=\lambda$. Thus, 
\begin{equation}\label{z1}
\varphi_\lambda(y_0)\geq z_2>z_1>z_0, 
\end{equation}
so $(y_0,\varphi_\lambda(y_0))\in\Qq\cap\{y=y_0\}\subset\Qq_0$.

As $\frac{\partial({\ttq}{\ttgg}^2)}{\partial\z}>0$ on $\Qq_0$, the polynomial function ${\tt p}_{y_0}(z):={\tt p}(y_0,z)$ is strictly decreasing on the interval ${]z_0,+\infty)}$. As ${\tt p}_{y_0}(z_1)=0$ and $\varphi_\lambda(y_0)>z_1$, we deduce ${\tt p}_{y_0}(\varphi_\lambda(y_0))<0$.

\paragraph{}Finally we prove: \em The function $\varphi_\lambda$ is Nash on $U$\em. 

Pick $y_0\in U$. We know that $(y_0,\varphi_\lambda(y_0))\in\Qq_0$, so $\frac{\partial({\ttq}{\ttgg}^2)}{\partial\z}(y_0,\varphi_\lambda(y_0))>0$. In addition, we know that ${\tt p}(y_0,\varphi_\lambda(y_0))<0$. It follows that
$$
\frac{\partial {\ttf}_2}{\partial\z}(y_0,\varphi_\lambda(y_0))={\tt p}^2(y_0,\varphi_\lambda(y_0)+\Big(2\z{\tt p}\cdot\Big(-\frac{\partial({\ttq}{\ttgg}^2)}{\partial\z}\Big)\Big)(y_0,\varphi_\lambda(y_0))>0.
$$

By the Implicit Function Theorem \cite[2.9.8]{bcr} there exist 
\begin{itemize}
\item open bounded intervals $I,J\subset\R$ such that $y_0\in I$ and $\varphi_\lambda(y_0)\in J$, 
\item $I\times J\subset\Qq_0\cap\{{\tt p}<0\}$ and
\item a Nash function $\phi:I\to J$
\end{itemize}
such that $X_{\lambda}\cap(I\times J)=\{z=\phi(y)\}$. Let us check: \em After shrinking $I$, we have $\varphi_\lambda|_I=\phi|_I$\em.

Indeed, suppose by contradiction that there exists a sequence $\{y_k\}_{k\geq1}\subset I$ that converges to $y_0$ and $\varphi_{\lambda}(y_k)>\sup(J)$ for all $k\geq1$. As $I\times J\subset\Qq_0\cap\{{\tt p}<0\}$, we deduce 
$$
I\times{]\inf(J),+\infty[}\subset\Qq_0\cap\{{\tt p}<0\}
$$ 
because ${\tt p}$ is decreasing on the line $\Qq\cap\{\y=y\}$ for all $y\in U$ (see the end of the proof of \ref{pert}). By \ref{derqg2} 
$$
\frac{\partial{\tt p}^2}{\partial{\tt z}}(y,z)=-2{\tt p}(y,z)\frac{\partial({\ttq}{\ttgg}^2)}{\partial\z}(y,z)>0
$$
for all $(y,z)>I\times{]\inf(J),+\infty[}$. Thus, ${\tt p}^2(y_k,\varphi_{\lambda}(y_k))>{\tt p}^2(y_k,\sup(J))>0$ for all $k\geq1$. As ${\ttf}_2(y_k,\varphi_{\lambda}(y_k))=\lambda$,
$$
\sup(J)\leq\varphi_{\lambda}(y_k)=\frac{\lambda}{{\tt p}^2(y_k,\varphi_{\lambda}(y_k))}\leq\frac{\lambda}{{\tt p}^2(y_k,\sup(J))}
$$
As $\{y_k\}_{k\geq1}\cup\{y_0\}$ is compact, there exists $M>0$ such that $\sup(J)<\varphi_{\lambda}(y_k)<M$. As 
$$
K:=X_\lambda\cap(\cl(I)\times[\sup(J),M])
$$
is compact, we may assume that the sequence $\{(y_k,\varphi_{\lambda}(y_k))\}_{k\geq1}$ converges to $(y_0,t_0)\in K$, so 
$$
\varphi_\lambda(y_0)<\sup(J)\leq t_0\leq\varphi_\lambda(y_0),
$$
which is a contradiction. Thus, after shrinking $I$, it holds $(y,\varphi_\lambda(y))\in I\times J$ for all $y\in I$. Consequently, $\varphi_\lambda|_I=\phi|_I$.

Therefore, $\varphi_\lambda$ is Nash function because it is locally Nash and by its definition semialgebraic.

\noindent(ii) Fix $b_{j_0}$ and take $y_0$ close to $b_{j_0}$. Consider the algebraic curve 
$$
Y:=\{{\tt p}:=1-{\ttq}{\ttgg}^2=0\}.
$$
By \ref{pert} there exists $(y_0,z_1)\in Y\cap\Qq_0$ such that $z_1<\varphi_{\lambda}(y_0)$. By \ref{ss:regq} (iii) there exists $M>0$ such that
\begin{multline*}
1={\ttq}(y_0,z_1){\ttgg}^2(y_0,z_1)\le(z_1-{\ttq}_0(y_0))(Mz_1)^{2(m-1-r)}\prod_{j=1}^rc_j^4(y_0-b_j)^4\\
\leq z_1(Mz_1)^{2(m-1-r)}\prod_{j=1}^rc_j^4(y_0-b_j)^4.
\end{multline*}
Therefore,
$$
\frac{1}{\sqrt[2m-2r-1]{M^{2(m-1-r)}\prod_{j=1}^r(y_0-b_j)^4}}\leq z_1\leq\varphi_\lambda(y_0),
$$
and so $\lim_{y\to b_{j_0}}\varphi_\lambda(y)=+\infty$, as required.
\qed

\subsubsection{Proof of Lemma \em\ref{lem2}}
The proof is conducted in several steps:

\paragraph{}We show first: \em the algebraic curve $X_\lambda$ when $\lambda<0$ lies in the band $\{\lambda-1<z<0\}$\em. 

Pick $(y_0,z_0)\in X_\lambda$. As ${\ttq}<0$ on the half-plane $\{z<0\}$ and $\lambda<0$, we have
$$
0>z_0=\frac{\lambda}{(1-{\ttq}(y_0,z_0){\ttgg}^2(y_0,z_0))^2}\geq\lambda>\lambda-1.
$$

\paragraph{}Next we prove: $\pi(X_\lambda)=\R$. 

Fix $y_0\in\R$ and consider the univariate polynomial ${\tth}(\z):={\ttf}_2(y_0,\z)$. Then ${\tth}(0)=0>\lambda$ and using again the fact that ${\ttq}<0$ on $\{z<0\}$ we deduce that
\begin{equation}\label{ml}
{\tth}(\lambda-1)=(\lambda-1)(1-{\ttq}(y_0,\lambda-1){\ttgg}^2(y_0,\lambda-1))^2\le\lambda-1<\lambda. 
\end{equation}
By continuity there exists $z_0\in{]\lambda-1,0[}$ such that ${\ttf}_2(y_0,z_0)={\tth}(z_0)=\lambda$, so $(y_0,z_0)\in X_\lambda$. Hence, $y_0=\pi(y_0,z_0)\in\pi(X_\lambda)$ as claimed.

\paragraph{} By \cite[2.9.10]{bcr} the curve $X_\lambda$ is the disjoint union of a finite set ${\mathfrak F}$ and finitely many affine Nash manifolds $N_1,\ldots,N_p$, each Nash diffeomorphic to the open interval ${]0,1[}$. As $X_\lambda$ contains no vertical lines, we may assume that $\pi:N_i\to\R$ is a Nash diffeomorphism onto its image for $i=1,\ldots,p$.

\paragraph{} Let $M>0$ be such that $\pi({\mathfrak F})\subset{]-M,M[}$. Let $Y_1,\dots,Y_s$ be the connected components of $X_\lambda$. As $X_{\lambda}\subset\R\times{]\lambda-1,0[}$, the same happens for each $Y_\ell$. Of course each $Y_\ell$ is a closed subset of $\R^2$. We claim: \em Some $Y_\ell$ connects the vertical edges $E_1:=\{-M\}\times\,[\lambda-1,0]$ and $E_2:=\{M\}\times\,[\lambda-1,0]$ of the rectangle $\Rr:=[-M,M]\times\,[\lambda-1,0]$\em. 
 
To prove this claim we will make use of Janiszewski's Theorem (see \cite{ja} or \cite[Theorem A]{bing}): \em If $K_1$ and $K_2$ are compact subsets of the plane $\R^2$ whose intersection is connected, a pair of points that is separated by neither $K_1$ nor $K_2$ is neither separated by their union $K_1\cup K_2$.\em

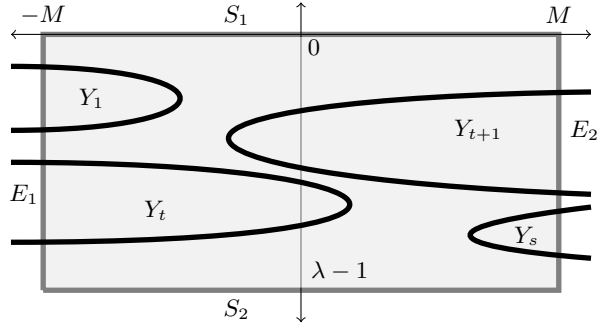
\begin{figure}[ht]
\begin{center}
{\Small
\begin{tikzpicture}[scale=0.848]

\draw[<->] (4.5,0) -- (4.5,5);
\draw[<->] (0,4.5) -- (9,4.5);

\draw[fill=gray!20, opacity=0.5,line width=2pt] (0.5,0.5) to (0.5,4.5) to (8.5,4.5) to (8.5,0.5) to (0.5,0.5);

\draw[line width=2pt] (0,1.25) .. controls (7.5,1.25) and (6.5,2.5) .. (0,2.5);
\draw[line width=2pt] (0,3) .. controls (3.5,3) and (3.5,4) .. (0,4);
\draw[line width=2pt] (9,3.6) .. controls (1.5,3.5) and (1.5,2.3) .. (9,2);
\draw[line width=2pt] (9,1.8) .. controls (6.5,1.5) and (6.5,1.2) .. (9,1);

\draw (0.2,2) node{$E_1$};
\draw (8.9,3) node{$E_2$};
\draw (3.5,4.8) node{$S_1$};
\draw (3.5,0.2) node{$S_2$};
\draw (4.7,4.3) node{$0$};
\draw (5.1,0.8) node{$\lambda-1$};
\draw (0.5,4.8) node{$-M$};
\draw (8.5,4.8) node{$M$};
\draw (2.25,1.75) node{$Y_t$};
\draw (1.25,3.5) node{$Y_1$};
\draw (7.25,3) node{$Y_{t+1}$};
\draw (8,1.35) node{$Y_s$};

\end{tikzpicture}}
\end{center}
\vspace*{-3mm}
\caption{Description of the situation}\label{fig:desc}
\end{figure}

Suppose by contradiction that no $Y_\ell$ connects $E_1$ with $E_2$. As $\pi(X_\lambda)=\R$, we may assume that the first $t<s$ connected components of $X_{\lambda}$ meet $E_1$, whereas the rest of them do not (Figure~\ref{fig:desc}). Define the compact sets
$$
K_1:=\bigcup_{\ell=1}^t(Y_\ell\cap\Rr)\cup E_1,\quad L:=\bigcup_{\ell=t+1}^s(Y_\ell\cap\Rr)
\quad \text{and}\quad K_2:=L\cup\partial\Rr,
$$
and note that the intersection $K_1\cap K_2=E_1$ is connected.

The horizontal segments $S_1:=[-M,M]\times\{0\}$ and $S_2:=[-M,M]\times\{\lambda-1\}$ satisfy ${\ttf}_2(S_1)=\{0\}$ and ${\ttf}_2(S_2)\subset\;]{-\infty},\lambda[$ (see \eqref{ml}), that is, $S_1\subset{\ttf}_2^{-1}(]\lambda,+\infty[)$ and $S_2\subset{\ttf}_2^{-1}(]{-\infty},\lambda[)$. Consequently, the algebraic curve $X_\lambda={\ttf}_2^{-1}(\lambda)$ separates the horizontal segments $S_1$ and $S_2$. Notice that if we restrict ${\ttf}_2$ to $\Rr$ the set $X_\lambda\cap\Rr$ still separates these segments on $\Rr$. Observe that $\partial \Rr=S_1\cup S_2\cup E_1\cup E_2$. As 
$$
K_1\cap\Big(\partial\Rr\cap\Big\{y\geq-\frac{M}{2}\Big\}\Big)=\varnothing\quad\text{and}\quad L\cap\Big(\partial\Rr\cap\Big\{y\leq\frac{M}{2}\Big\}\Big)=\varnothing,
$$
the real number
$$
\veps:=\min\Big\{\dist\Big(K_1,\Big(\partial\Rr\cap\Big\{y\geq-\frac{M}{2}\Big\}\Big)\Big),\dist\Big( L,\Big(\partial\Rr\cap\Big\{y\leq\frac{M}{2}\Big\}\Big)\Big)\Big\}
$$ 
is strictly positive and does not exceed $\frac{M}{2}$ (Figures~\ref{fig:pos1} and \ref{fig:pos2}). Let $0<\delta<\frac{\veps}{2}$ be such that $p_1:=(0,-\delta)\in{\ttf}_2^{-1}(]\lambda,+\infty[)$ and $p_2:=(0,\lambda-1+\delta)\subset{\ttf}_2^{-1}(]{-\infty},\lambda[)$. It holds that $K_1\cup K_2=(X_\lambda\cap\Rr)\cup\partial\Rr$ separates the points $p_1$ and $p_2$. Note that both $p_1,p_2$ belong to the open connected subsets
\begin{align*}
&W_1:=\Big\{p\in\Int(\Rr):\ 0<\dist\Big(p,\Big(\partial\Rr\cap\Big\{y\leq\frac{M}{2}\Big\}\Big)<\frac{\veps}{2}\Big\},\\
&W_2:=\Big\{p\in\Int(\Rr):\ 0<\dist\Big(p,\Big(\partial\Rr\cap\Big\{y\geq-\frac{M}{2}\Big\}\Big)<\frac{\veps}{2}\Big\}
\end{align*}
of $\Int(\Rr)$, while $K_1\cap W_2=\varnothing$ and $K_2\cap W_1=\varnothing$. Thus, $p_1,p_2$ are separated neither by $K_1$ nor by $K_2$, which contradicts Janiszewski's Theorem. The claim follows.

\noindent
\begin{figure}[ht]
\begin{minipage}[t]{0.49\textwidth}
{\Small\begin{tikzpicture}[scale=0.83]

\draw[fill=black!40!white,draw=none,fill opacity=1](2,4.5) arc (180:270:0.5cm) to (8,4) to (8,1) to (2.5,1) arc (90:180:0.5cm) to (8.5,0.5) to (8.5,4.5) to (2,4.5);
\draw[draw=none, fill=gray!20,opacity=0.5] (0.5,0.5) to (0.5, 4.5) to (8.5,4.5) to (8.5,0.5) to(0.5,0.5);
\draw[<->] (4.5,0) -- (4.5,5);
\draw[<->] (0,4.5) -- (9,4.5);

\draw[line width=2pt] (0.5,0.5) to (0.5,4.5);
\draw[line width=2pt,dashed] (0.5,4.5) to (8.5,4.5) to (8.5,0.5) to (0.5,0.5);

\draw[line width=2pt] (0,1.25) .. controls (7.5,1.25) and (6.5,2.5) .. (0,2.5);
\draw[line width=2pt] (0,3) .. controls (3.5,3) and (3.5,4) .. (0,4);


\draw (0.2,2) node{$E_1$};
\draw (8.9,3) node{$E_2$};
\draw (3.5,4.8) node{$S_1$};
\draw (3.5,0.2) node{$S_2$};
\draw (4.75,4.25) node{$0$};
\draw (5.1,0.75) node{$\lambda-1$};
\draw (0.5,4.8) node{$-M$};
\draw (8.5,4.8) node{$M$};
\draw (2.25,1.75) node{$Y_t$};
\draw (1.25,3.5) node{$Y_1$};
\draw (2.5,4.5) node{\tiny$|$};
\draw (6.5,4.5) node{\tiny$|$};
\draw (2.5,4.8) node{$-M/2$};
\draw (6.5,4.8) node{$M/2$};
\draw (4.5,0.75) node{\tiny$\bullet$};
\draw (4.5,4.25) node{\tiny$\bullet$};
\draw (4.2,0.75) node{$p_1$};
\draw (4.2,4.25) node{$p_2$};
\draw (8,2.5) node{$W_2$};
\draw (3,2.8) node{$K_1$};

\end{tikzpicture}
\vspace*{-7mm}
\captionof{figure}{Positions of $K_1$ and $W_2$}\label{fig:pos1}
}

\end{minipage}
\hfill
\begin{minipage}[t]{0.49 \textwidth}
{\Small
\begin{tikzpicture}[scale=0.83]

\draw[fill=black!40!white,draw=none,fill opacity=0.75](7,4.5) arc (0:-90:0.5cm) to (1,4) to (1,1) to (6.5,1) arc (90:0:0.5cm) to (0.5,0.5) to (0.5,4.5) to (6.5,4.5);
\draw[draw=none, fill=gray!20,opacity=0.5] (0.5,0.5) to (0.5, 4.5) to (8.5,4.5) to (8.5,0.5) to(0.5,0.5);
\draw[<->] (4.5,0) -- (4.5,5);
\draw[<->] (0,4.5) -- (9,4.5);

\draw[line width=2pt] (0.5,0.5) to (0.5,4.5) to (8.5,4.5) to (8.5,0.5) to (0.5,0.5);

\draw[line width=2pt] (9,3.6) .. controls (1.5,3.5) and (1.5,2.3) .. (9,2);
\draw[line width=2pt] (9,1.8) .. controls (6.5,1.5) and (6.5,1.2) .. (9,1);

\draw (0.2,2) node{$E_1$};
\draw (8.9,3) node{$E_2$};
\draw (3.5,4.8) node{$S_1$};
\draw (3.5,0.2) node{$S_2$};
\draw (4.75,4.25) node{$0$};
\draw (5.1,0.75) node{$\lambda-1$};
\draw (0.5,4.8) node{$-M$};
\draw (8.5,4.8) node{$M$};
\draw (7.25,3) node{$Y_{t+1}$};
\draw (8,1.35) node{$Y_s$};
\draw (2.5,4.5) node{\tiny$|$};
\draw (6.5,4.5) node{\tiny$|$};
\draw (2.5,4.8) node{$-M/2$};
\draw (6.5,4.8) node{$M/2$};
\draw (4.5,0.75) node{\tiny$\bullet$};
\draw (4.5,4.25) node{\tiny$\bullet$};
\draw (4.2,0.75) node{$p_1$};
\draw (4.2,4.25) node{$p_2$};
\draw (1,2.5) node{$W_1$};
\draw (6.5,1.75) node{$K_2$};
\end{tikzpicture}

\vspace*{-3mm}
\captionof{figure}{Positions of $K_2$ and $W_1$}\label{fig:pos2}
}

\end{minipage}
\end{figure}

\paragraph{} Let $\alpha:[-M,M]\to X_\lambda$ be a continuous semialgebraic path such that $\alpha(-M)\in X_\lambda\cap E_1$ and $\alpha(M)\in X_\lambda\cap E_2$. As $\pi({\mathfrak F})\subset{]-M,M[}$, we may assume that 
\begin{itemize}
\item $\alpha(-M)\in N_1$ and $\alpha(M)\in N_2$,
\item $\pi(N_1)=({-\infty},-M[$ and $\pi(N_2)=]M,\infty)$, where $\pi|_{N_1}$ and $\pi|_{N_2}$ are homeomorphisms.
\end{itemize}

\begin{figure}[ht]
{\Small\begin{tikzpicture}[scale=0.83]
\draw[draw=none, fill=gray!20,opacity=0.5] (1.5,0.5) to (1.5, 4.5) to (9.5,4.5) to (9.5,0.5) to(1.5,0.5);
\draw[<->] (5.5,0) -- (5.5,5);
\draw[<->] (0,4.5) -- (11,4.5);

\draw[line width=2pt] (1.5,0.5) to (1.5,4.5) to (9.5,4.5) to (9.5,0.5) to (1.5,0.5);

\draw[line width=2pt] (0,4) .. controls (7,4) and (7,3) .. (4,2.5) .. controls (2,2.2) and (2,1.5) .. (4,1.5).. controls (6,1.5) and (7,2) .. (7,2) .. controls (8,2.45) and (9.5,2.6) .. (11,2.5);
\draw[line width=2pt] (0,0.75) .. controls (4.5,1) and (4.5,1.25) .. (0,1.75);
\draw[line width=2pt] (11,4) .. controls (6,4) and (6,3) .. (11,3);

\draw (5.7,4.2) node{$0$};
\draw (6.1,0.8) node{$\lambda-1$};
\draw (1.5,4.8) node{$-M$};
\draw (9.5,4.8) node{$M$};
\draw (1.5,3.975) node{$\bullet$};
\draw (9.5,2.5) node{$\bullet$};
\draw (6.2,2.75) node{$X_\lambda$};
\draw (4,2) node{$\alpha(t)$};
\draw (0,3.65) node{$(\pi|_{N_1})^{-1}(t)$};
\draw (11,2.1) node{$(\pi|_{N_2})^{-1}(t)$};
\end{tikzpicture}}
\caption{Construction of the parametrization $\psi_\lambda$}\label{fig:par}
\end{figure}
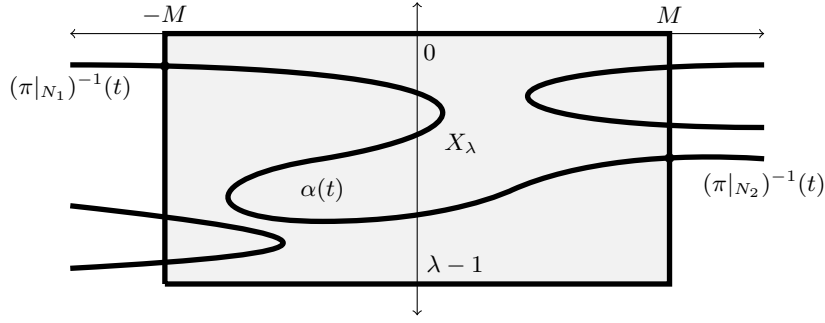

Finally, the continuous semialgebraic map (Figure~\ref{fig:par})
$$
\psi_\lambda(t)=(\psi_{1,\lambda}(t), \psi_{2,\lambda}(t)):=\begin{cases}
(\pi|_{N_1})^{-1}(t)&\text{it $t<-M$,}\\
\alpha(t)&\text{if $-M\le t\le M$,}\\
(\pi|_{N_2})^{-1}(t)&\text{it $t>M$,}\\
\end{cases}
$$
satisfies $\pi(\text{Im}(\psi_\lambda))=\R$ and $\lim_{t\to\pm\infty}(\psi_{1,\lambda}(t))=\pm\infty$, as required.
\qed

\subsubsection{Proof of Theorem \em\ref{main1dim2}}
Denote $\ol{\Ss}:=\R^2\setminus\Int(\p)$ and
$$
\ol{\Tt}:=\begin{cases}
\R^2\setminus\Int(\p_{m,\times})&\text{if $r<m-1$,}\\
\R^2\setminus(\Int(\p)\cup\Delta)&\text{if $r=m-1$.}
\end{cases}
$$ 
We have to prove that ${\ttf}(\ol{\Tt})=\ol{\Ss}$. Write $\overline{\Tt}=\overline{\Tt}_1\cup\overline{\Tt}_2\cup\overline{\Tt}_3$ and $\overline{\Ss}=\overline{\Ss}_1\cup\overline{\Ss}_2\cup\overline{\Ss}_3$ where
\begin{eqnarray*}
\overline{\Tt}_1:=\overline{\Tt}\cap\{z>0\},&
\quad & \overline{\Ss}_1:=\overline{\Ss}\cap\{z>0\},\\
\overline{\Tt}_2:=\overline{\Tt}\cap\{z=0\},&
\quad & \overline{\Ss}_2:=\overline{\Ss}\cap\{z=0\},\\
\overline{\Tt}_3:=\overline{\Tt}\cap\{z<0\},&
\quad & \overline{\Ss}_3:=\overline{\Ss}\cap\{z<0\}.
\end{eqnarray*}
Note that 
$$
\ol{\Ss}_1\sqcup\ol{\Ss}_2=\ol{\Ss}
\cap\{\z\geq0\}=\{\z\geq0\}\quad\text{and}
\quad\ol{\Tt}_3=\ol{\Ss}_3=
\{\z<0\}\setminus\Int(\p).
$$
It is enough to show:
\begin{eqnarray*}
{\ttf}(\overline{\Tt}_1\sqcup\overline{\Tt}_2)=
\overline{\Ss}_1\sqcup\overline{\Ss}_2,&\quad& {\ttf}(\overline{\Tt}_3)=\overline{\Ss}_3.
\end{eqnarray*}
The inclusion
$
{\ttf}(\ol{\Tt}_1\cup\ol{\Tt}_2)\subset
\ol{\Ss}_1\sqcup\ol{\Ss}_2$ is straightforward.
Therefore, we are left to show 
\begin{align}
\ol{\Ss}_1\sqcup\ol{\Ss}_2&\subset{\ttf}(\ol{\Tt}_1\sqcup\ol{\Tt}_2),\label{26}\\
\ol{\Ss}_3&\subset{\ttf}(\ol{\Tt}_3)\subset\ol{\Ss}_3.\label{27}
\end{align}
To prove \eqref{26} it is enough to check:
\begin{align*}
\{(b_1,0),\ldots,(b_r,0)\}&\subset{\ttf}(\ol{\Tt}_2),\\
\{\z=0\}\setminus\{(b_1,0),\ldots,(b_r,0)\}&\subset{\ttf}(\ol{\Tt}_1),\\
\text{and }\{\z>0\}&\subset{\ttf}(\ol{\Tt}_1).
\end{align*}

\paragraph{}\label{1}
We check first $\{(b_1,0),\ldots,(b_r,0)\}\subset{\ttf}(\ol{\Tt}_2)$. Note that $\{b_j\}\times\R\subset\ol{\Tt}$ because 
$$
\R^2\setminus\ol{\Tt}=\left\{
\begin{array}{ll}
\Int(\p_{m,\times})&\text{if $r<m-1$,}\\[4pt]
(\Int(\p)\cup\Delta)&\text{if $r=m-1$}
\end{array}
\right\}\subset\bigcap_{j=1}^r\{{\ttl}_j(\y,\z)=c_j(\y-b_j)>0\}. 
$$
As ${\ttf}(b_j,\lambda)=(b_j,\lambda)$, we have $\{b_j\}\times\R\subset{\ttf}(\ol{\Tt})$. Therefore $\{(b_1,0),\ldots,(b_r,0)\}\subset{\ttf}(\ol{\Tt}_2)$.

\paragraph{}Next we show: \em $U\times\{0\}=\{\z=0\}\setminus\{(b_1,0),\ldots,(b_r,0)\}\subset{\ttf}(\ol{\Tt}_1)$\em.

For each $y_0\in U:=\R\setminus\{b_1,\dots,b_r\}$ there exists by Lemma \ref{lem0} $z_1\in\R$ such that $(y_0,z_1)\in\Qq$ and ${\tt p}(y_0,z_1)=0$, so ${\ttf}_2(y_0,z_1)=0$ and ${\ttf}_1(y_0,z_1)=y_0$. By \ref{ss:regq} 
\begin{equation}\label{Q}
\Qq\subset\left\{
\begin{array}{ll}
\{\z>0\}\setminus\p_{m,\times}&\text{if $r<m-1$,}\\[4pt]
\{\z>0\}\setminus\p&\text{if $r=m-1$}
\end{array}
\right\}\subset\{\z>0\}\cap\ol{\Tt}=\ol{\Tt}_1. 
\end{equation}
Thus $U\times\{0\}\subset{\ttf}(\ol{\Tt}_1)$. 

\paragraph{}Let us prove: \em $\{\z>0\}\subset{\ttf}(\ol{\Tt}_1)$\em. To that end we show: \em If $\lambda>0$, the line $\{\z=\lambda\}$ is contained in ${\ttf}(\ol{\Tt}_1)$\em. 

Consider the curve $X_\lambda:=\{{\ttf}_2(y,z)=\lambda\}$. By Lemma \ref{lem1} there exists a Nash function $\varphi_\lambda:U:=\R\setminus\{b_1,\dots,b_r\}\to \R$ such that $\lim_{y\to b_j}\varphi_\lambda(y)=+\infty$ for $j=1,\dots,r$ and its graph $\Gamma_\lambda\subset X_\lambda\cap\Qq$. The latter condition means in particular that $\lim_{y\to\pm\infty}\varphi_\lambda(y)=+\infty$. 

Consider the function
$$
\Phi_\lambda:\R\to\R,\ y\mapsto
\begin{cases}
{\ttf}_1(y,\varphi_\lambda(y))&\text{if $y\in U$,}\\
b_j&\text{if $y=b_j$.}
\end{cases}
$$
Let us check: $\im(\Phi_\lambda)=\R$. It is enough to prove: \em $\Phi_\lambda$ is continuous and $\lim_{y\to\pm\infty}\Phi_\lambda(y)=\pm\infty$\em. 

Indeed, since $(y,\varphi_\lambda(y))\in X_\lambda$ we have
$$
\varphi_\lambda(y){\tt p}^2(y,\varphi_\lambda(y))=\lambda
\quad\leadsto\quad
{\tt p}(y,\varphi_\lambda(y))=\sqrt{\frac{\lambda}{\varphi_\lambda(y)}}.
$$
Thus, for each $y\in U$ we have by equation \eqref{sirve}
\begin{equation}\label{sqr}
\Phi_\lambda(y)={\ttf}_1(y,\varphi_\lambda(y))=y\Big(2\frac{\lambda}{\varphi_\lambda(y)}-2\sqrt{\frac{\lambda}{\varphi_\lambda(y)}}+1\Big),
\end{equation}
so $\lim_{y\to b_j}\Phi_\lambda(y)=b_j$ for $j=1,\ldots,r$ and $\Phi_\lambda$ is continuous. Also from equation \eqref{sqr} follows that $\lim_{y\to\pm\infty}\Phi_\lambda(y)=\pm\infty$. Thus, the map $\R\to\R\times\lambda=\{\z=\lambda\},\ y\mapsto(\Phi_{\lambda}(y),\lambda)$ is surjective. As the graph $\Gamma_\lambda$ of $\varphi_\lambda$ is contained in $\Qq\subset\{\z>0\}\cap\ol{\Tt}$ (see equation \eqref{Q}) and the points $(b_j,\lambda)\in\ol{\Tt}$ (see \ref{1}), we deduce $\{\z=\lambda\}\subset{\ttf}(\ol{\Tt}\cap\{\z>0\})={\ttf}(\ol{\Tt}_1)$.

Once it is proved \eqref{26} we proceed with \eqref{27}.

\paragraph{} Let us check: \em $\ol{\Ss}_3\subset{\ttf}(\ol{\Tt}_3)$\em. To that end we show: \em If $\lambda<0$, the difference $\{\z=\lambda\}\setminus\Int(\p)$ is contained in ${\ttf}(\ol{\Tt}_3)={\ttf}(\ol{\Tt}\cap\{\z<0\})$\em. 

Let $\psi_\lambda:\R\to X_\lambda$ be the continuous semialgebraic map constructed in Lemma \ref{lem2}. Write $\R=C\cup A$ where $C:=\psi_{\lambda}^{-1}(\im(\psi_{\lambda})\setminus\Int(\p))$ and $A:=\psi_{\lambda}^{-1}(\im(\psi_{\lambda})\cap\Int(\p))$. Notice that if $t_0\in\partial C$ there exist points $t_1\in C$ and $t_2\in A$ close to $t_0$. As $\psi_\lambda$ is continuous, $\psi_\lambda(t_1)\in\im(\psi_{\lambda})\setminus\Int(\p)$ and $\psi_\lambda(t_2)\in\Int(\p)$ are close to $\psi_\lambda(t_0)$, so $\psi_\lambda(t_0)\in X_\lambda\cap\partial\p$.

As $\psi_\lambda(t_0)\in\partial\p$ we have ${\ttgg}(\psi_\lambda(t_0))=0$, so ${\tt p}(\psi_\lambda(t_0))=1$ and ${\ttf}(\psi_\lambda(t_0))=\psi_\lambda(t_0)$. In addition, as $\psi_\lambda(t_0)\in X_\lambda$ we have ${\ttf}_2(\psi_\lambda(t_0))=\lambda$, so $\psi_\lambda(t_0)={\ttf}(\psi_\lambda(t_0))=(\psi_{1,\lambda}(t_0),\lambda)$. Thus,
$$
\psi_\lambda(t_0)\in\{\z=\lambda\}\cap\partial\p=\partial(\{\z=\lambda\}\setminus\Int(\p)).
$$
Notice that $\{\z=\lambda\}\setminus\Int(\p)=(S_1\times\{\lambda\})\sqcup(S_2\times\{\lambda\})$ where 
$$
S_1:=\begin{cases}
{]{-\infty},c_\lambda]}&\text{or}\\
\varnothing
\end{cases}
\quad\text{and}\quad
S_2:=\begin{cases}
{[d_\lambda,+\infty[}&\text{or}\\
\varnothing.
\end{cases}
$$

As $\pi(\im(\psi_\lambda))=\R$ and $\lim_{t\to\pm\infty}\psi_{1,\lambda}(t)=\pm\infty$, there exists two intervals $C_1$ and $C_2$ of $C$ (in case they are non-empty) such that
$$
C_1:=\begin{cases}
{]{-\infty},c'_\lambda]}&\text{if $S_1=]{-\infty},c_\lambda]$,}\\
\varnothing&\text{if $S_1=\varnothing$}
\end{cases}
\quad\text{and}\quad
C_2:=\begin{cases}
{[d'_\lambda,+\infty[}&\text{if $S_2=[d_\lambda,+\infty[,$}\\
\varnothing&\text{if $S_2=\varnothing$}
\end{cases}
$$
where $\psi_{1,\lambda}(c_\lambda')=c_\lambda$ and $\psi_{1,\lambda}(d_\lambda')=d_\lambda$ if the corresponding $S_i\neq\varnothing$. Let us show: \em ${\ttf}(\psi_\lambda(C_i))=S_i\times\{\lambda\}$ for $i=1,2$\em. As $\psi_\lambda(C)\subset\{\z<0\}\setminus\Int(\p)=\{\z<0\}\cap\ol{\Tt}=\ol{\Tt}_3$, we conclude 
$$
\{\z=\lambda\}\setminus\Int(\p)\subset{\ttf}(\psi_\lambda(C))\subset{\ttf}(\ol{\Tt}\cap\{\z<0\})={\ttf}(\ol{\Tt}_3).
$$

Indeed, rewrite ${\ttf}_1$ as follows
\begin{equation}\label{sirve}
{\ttf}_1=\y(({\tt p}-1)^2+{\tt p}^2)=\y(2{\tt p}^2-2{\tt p}+1)=\y\Big(2\Big({\tt p}^2-\frac{1}{2}\Big)^2+\frac{1}{2}\Big)=\y\Big(2\Big(\frac{1}{2}-{\ttq}{\ttgg}^2\Big)^2+\frac{1}{2}\Big).
\end{equation}
As ${\ttf}_2(\psi_\lambda(t))=\lambda$,
$$
{\tt p}^2(\psi_\lambda(t))=\frac{\lambda}{\psi_{2,\lambda}(t)}\quad\leadsto\quad{\ttf}_1(\psi_\lambda(t))=\psi_{1,\lambda}(t)\Big(2\Big(\frac{\lambda}{\psi_{2,\lambda}(t)}-\frac{1}{2}\Big)^2+\frac{1}{2}\Big).
$$
As $\lim_{t\to\pm\infty}\psi_{1,\lambda}(t)=\pm\infty$, we have $\lim_{t\to\pm\infty}{\ttf}_1(\psi_\lambda(t))=\pm\infty$. In addition,
$$
\left.
\begin{array}{l}
\psi_{\lambda}(c_\lambda')=(c_\lambda,\lambda)\\[4pt]
\psi_{\lambda}(d_\lambda')=(d_\lambda,\lambda)
\end{array}
\right\}\in\partial\p,
$$
so ${\ttf}_1(\psi_{\lambda}(c_\lambda'))=c_\lambda$ and ${\ttf}_1(\psi_{\lambda}(d_\lambda'))=d_\lambda$. Consequently, ${\ttf}_1(\psi_\lambda(C_i))=S_i$ for $i=1,2$, as required. 

\paragraph{} Finally we show: ${\ttf}(\ol{\Tt}_3)\subset\ol{\Ss}_3=\{\z<0\}\setminus\Int(\p)$. 

Notice first that ${\ttf}(\ol{\Ss}_3)\subset\{z<0\}$. Therefore, we only have to check that ${\ttf}(\ol{\Ss}_3)\cap\Int(\p)$ is the empty set. Let $(y_0,z_0)\in\ol{\Tt}_3=\ol{\Tt}\cap\{\z<0\}=\ol{\Ss}\cap\{\z<0\}=\{\z<0\}\setminus\Int(\p)$ and choose an edge of $\p$ and a linear equation ${\ttl}:=a\y+b\z+c$ of the line containing it that satisfies $\p\subset\{{\ttl}\geq0\}$ and ${\ttl}(y_0,z_0)\le 0$. As $-\vec{e}_2\equiv(0,-1)\in\conv{\p}{}$ and $(0,0)\in\p$, we have $b\leq0$ and $c\geq0$. Write $b=-\beta^2$ and $c=\gamma^2$. We have
\begin{equation}\label{ineq}
{\ttl}(y_0,z_0)=ay_0-\beta^2z_0+\gamma^2\leq0.
\end{equation}
We show next ${\ttl}({\ttf}(y_0,z_0))\leq0$. Indeed,
\begin{equation*}
\begin{split}
{\ttl}({\ttf}(y_0,z_0))=\,&ay_0((1-{\tt p})^2(y_0,z_0)+{\tt p}^2(y_0,z_0))-\beta^2z_0{\tt p}^2(y_0,z_0)+\gamma^2\\
=\,&(ay_0-\beta^2z_0+\gamma^2)((1-{\tt p})^2(y_0,z_0)+{\tt p}^2(y_0,z_0))\\
&+\beta^2z_0(1-{\tt p})^2(y_0,z_0)-\gamma^2((1-{\tt p})^2(y_0,z_0)+{\tt p}^2(y_0,z_0))\leq0
\end{split}
\end{equation*}
because $ay_0-\beta^2z_0+\gamma^2\leq0$ and $z_0<0$. Hence ${\ttf}(y_0,z_0)\not\in\Int(\p)$, as required.
\qed

\renewcommand{\theparagraph}{\thesubsection.\arabic{paragraph}}
\setcounter{secnumdepth}{4}

\section{Separation of distinguished semialgebraic sets}\label{s4}

A crucial step to prove Theorem \ref{main2} is roughly speaking the following separation result. Let $\pol\subset\R^n$ be a convex polyhedron of dimension $n$ and let $\p\subset\R^{n-1}$ be the projection of $\pol$ onto the first $(n-1)$ coordinates. Consider the `infinite prysm' $\Int(\p)\times\R$, which henceforth will be called \em skyscraper\em. Under mild conditions on the placement of $\pol$ in $\R^n$ the difference $(\Int(\p)\times\R)\setminus\pol$ has two connected components: the `attic' $\Cc^+$ of the skyscraper $\Int(\p)\times\R$ and its `basement' $\Cc^-$. It would be desirable (and much simpler for the exposition) to find a polynomial map ${\ttf}\in\R[\x_1,\ldots,\x_{n-1}]$ that separates the attic $\Cc^+$ and the basement $\Cc^-$ of the skyscraper $\Int(\p)\times\R$, but the strong restrictions concerning separation of non-compact semialgebraic sets by polynomial functions suggests to use more general functions. In Lemma \ref{prop:polP} we find a rational map depending on $(\x_1,\ldots,\x_{n-1})$ that separates $\Cc^+$ and $\Cc^-$, see Figure \ref{ratsep}. In addition its pole set is contained in its zero set and it does not intersect $\Int(\p)$.

\begin{figure}[ht]
\setlength{\unitlength}{10mm}
\begin{picture}(10,8)
\put(0,0){\includegraphics[width=10cm]{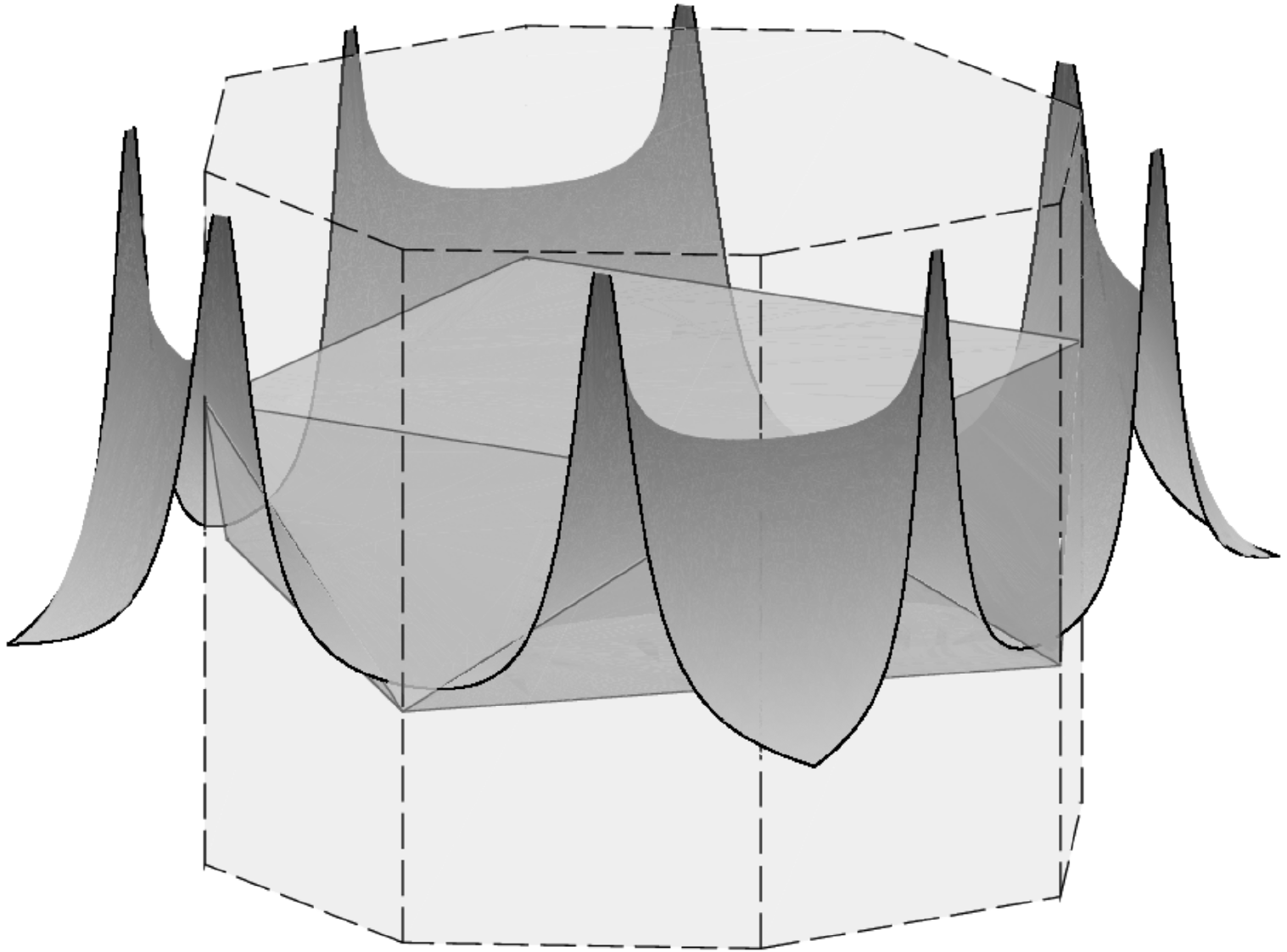}}
\put(3.5,3){$\pol$}
\put(3.5,4.8){$\Cc^+$}
\put(3.5,0.5){$\Cc^-$}
\put(5.9,6.5){$\Int(\p)\times\R$}
\put(0,4){\vector(1,-1){0.8}}
\put(-0.8,4.5){rational}
\put(-0.9,4.1){separator}
\end{picture}
\caption{Rational separator for the attic and the basement of a skyscraper}\label{ratsep}
\end{figure}

The result announced above is based on a preliminary one concerning rational separation of tuples of variables, which has interest by its own.

\subsection{Rational separation of tuples of variables}

Fix non-negative integers $r,s\geq1$ and set $y:=(y_1,\ldots,y_r)$ and $z:=(z_1,\ldots,z_s)$. Consider the convex polyhedron
$$
\Qq_{r,s}:=\{(y;z)\in\R^r\times\R^s:\,\max\{y_1,\ldots,y_r\}\leq\min\{z_1,\ldots,z_s\}\}.
$$
It holds
$$
\Int(\Qq_{r,s})=\{(y;z)\in\R^r\times\R^s:\,\max\{y_1,\ldots,y_r\}<\min\{z_1,\ldots,z_s\}\}.
$$
and $\cl(\Int(\Qq_{r,s}))=\Qq_{r,s}$. If $r,s\geq1$ and $k,\ell\geq0$, the map
\begin{equation*}
\begin{split}
\rho_{r,s}^{k,\ell}:\R^r\times\R^s&\to\R^{r+k}\times\R^{s+\ell},\\ 
(y;z)&\mapsto(y_1,\ldots,y_{r-1},y_r,\overset{(k+1)}{\ldots},y_r;z_1,\ldots,z_{s-1},z_s,\overset{(\ell+1)}{\ldots},z_s)
\end{split}
\end{equation*}
is a linear embedding such that $\rho(\Int(\Qq_{r,s}))\subset\Int(\Qq_{r+k,s+\ell})$ and $\rho(\Qq_{r,s})\subset\Qq_{r+k,s+\ell}$. Using that $\min(S)=-\max(-S)$ and $\max(S)=-\min(-S)$ for any finite set $S$, one proves that \em if $r,s\geq1$ the linear isomorphism
$$
\sigma:\R^r\times\R^s\to\R^s\times\R^r,\ (y;z)\mapsto(-z;-y)
$$
satisfies $\sigma(\Qq_{r,s})=\Qq_{s,r}$ and $\sigma(\Int(\Qq_{r,s}))=\Int(\Qq_{s,r})$\em.

Recall that a \em rational separator for the pair $(r,s)$ \em is a rational function $\phi_{r,s}:\R^r\times\R^s\dashrightarrow\R$ that is regular on $\Int(\Qq_{r,s})$, extends to a continuous (semialgebraic) function $\Qq_{r,s}$ and satisfies
$$
\max\{y_1,\ldots,y_r\}<\phi_{r,s}(y;z)<\min\{z_1,\ldots,z_s\}
$$
for each $(y;z)\in\Int(\Qq_{r,s})$. As $\cl(\Int(\Qq_{r,s}))=\Qq_{r,s}$ and $\phi_{r,s}$ extends to a continuous (semialgebraic) function $\Phi_{r,s}$ on $\Qq_{r,s}$, we deduce
$$
\max\{y_1,\ldots,y_r\}\leq\Phi_{r,s}(y;z)\leq\min\{z_1,\ldots,z_s\}
$$
for each $(y;z)\in\Qq_{r,s}$.

\subsubsection{Recursive properties of rational separators}
We present next some recursive properties of rational separators that will ease the proof of the existence of rational separators in Lemma \ref{lemma}.

\begin{remarks}\label{lemmar}
(i) Let $\phi_{r,s}:\R^r\times\R^s\dashrightarrow\R$ be a a rational separator for $(r,s)$. Then the rational function $\phi_{s,r}(z,y):=-\phi_{r,s}(-y;-z)$ is a rational separator for $(s,r)$. 

(ii) Let $r,s\geq1$ and $k,\ell\geq0$ and let $\phi_{r+k,s+\ell}:\R^{r+k}\times\R^{s+\ell}\dashrightarrow\R$ be a rational separator for $(r+k,s+\ell)$. Then $\phi_{r,s}(y,z)=\phi_{r+k,s+\ell}(\rho_{r,s}^{k,\ell}(y,z))$ is a rational separator for $(r,s)$.
\end{remarks}

\begin{lem}\label{indstep}
Let $r,s,k\geq1$ be such that $r\geq k$. Let $\phi_{r-k+1,s}$ be a rational separator for $(r-k+1,s)$ and $\phi_{k,s}$ a rational separator for $(k,s)$. Then 
$$
\phi_{r,s}(y;z)=\phi_{r-k+1,s}(y_1,\ldots,y_{r-k},\phi_{k,s}(y_{r-k+1},\ldots,y_r;z);z)
$$
is a rational separator for $(r,s)$.
\end{lem}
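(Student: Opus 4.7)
The plan is to show that $\phi_{r,s}$ inherits all three defining properties of a rational separator (regularity on the interior, continuous extension to the closure, and the two-sided strict inequality) from the hypothesized separators $\phi_{r-k+1,s}$ and $\phi_{k,s}$ by tracking what happens to a point as it passes through the nested composition. The key observation is that the range of a rational separator for $(k,s)$ on $\Int(\Qq_{k,s})$ is squeezed between $\max$ of the $y$-coordinates and $\min$ of the $z$-coordinates, which is exactly the slot occupied by the coordinate that gets replaced in the outer separator.

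First, I would fix $(y;z)\in\Int(\Qq_{r,s})$, so that $\max\{y_1,\dots,y_r\}<\min\{z_1,\dots,z_s\}$. In particular $\max\{y_{r-k+1},\dots,y_r\}<\min\{z_1,\dots,z_s\}$, hence $(y_{r-k+1},\dots,y_r;z)\in\Int(\Qq_{k,s})$. The hypothesis on $\phi_{k,s}$ gives that $w:=\phi_{k,s}(y_{r-k+1},\dots,y_r;z)$ is defined, is a regular function of $(y;z)$ on $\Int(\Qq_{r,s})$, and satisfies $\max\{y_{r-k+1},\dots,y_r\}<w<\min\{z_1,\dots,z_s\}$. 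Combining $w<\min\{z_1,\dots,z_s\}$ with $\max\{y_1,\dots,y_{r-k}\}<\min\{z_1,\dots,z_s\}$ yields $\max\{y_1,\dots,y_{r-k},w\}<\min\{z_1,\dots,z_s\}$, so the point $(y_1,\dots,y_{r-k},w;z)$ lies in $\Int(\Qq_{r-k+1,s})$. Thus $\phi_{r-k+1,s}$ is regular at this point, proving regularity of $\phi_{r,s}$ on $\Int(\Qq_{r,s})$, and its output $\phi_{r,s}(y;z)$ satisfies
$$\max\{y_1,\dots,y_{r-k},w\}<\phi_{r,s}(y;z)<\min\{z_1,\dots,z_s\}.$$
The upper bound is immediately the required one. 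For the lower bound, note that on the one hand $\phi_{r,s}(y;z)>y_i$ for $i\le r-k$, and on the other hand $\phi_{r,s}(y;z)>w>\max\{y_{r-k+1},\dots,y_r\}$, so $\phi_{r,s}(y;z)>\max\{y_1,\dots,y_r\}$, completing the strict two-sided inequality.

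For the continuous extension to $\Qq_{r,s}$, I would invoke the continuous extensions $\Phi_{k,s}$ and $\Phi_{r-k+1,s}$ provided by the hypothesis. The same chain of inequalities with $<$ replaced by $\le$ shows that if $(y;z)\in\Qq_{r,s}$ then $(y_{r-k+1},\dots,y_r;z)\in\Qq_{k,s}$, so $W:=\Phi_{k,s}(y_{r-k+1},\dots,y_r;z)$ is defined and satisfies $\max\{y_{r-k+1},\dots,y_r\}\le W\le\min\{z_1,\dots,z_s\}$; then $(y_1,\dots,y_{r-k},W;z)\in\Qq_{r-k+1,s}$, and $\Phi_{r-k+1,s}$ applied to it yields the required continuous semialgebraic extension of $\phi_{r,s}$ to $\Qq_{r,s}$, agreeing with $\phi_{r,s}$ on the interior.

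The only potentially delicate point is the verification that the composition is \emph{regular}, not merely defined and continuous, on the whole of $\Int(\Qq_{r,s})$: we need to know that the pole locus of $\phi_{r-k+1,s}$ does not spoil things when the middle argument is the rational function $\phi_{k,s}(y_{r-k+1},\dots,y_r;z)$ rather than an independent variable. This is handled by the strict inequality $\max\{y_1,\dots,y_{r-k},w\}<\min\{z_1,\dots,z_s\}$ proved above, which places the intermediate point in the open region $\Int(\Qq_{r-k+1,s})$ where $\phi_{r-k+1,s}$ is regular by hypothesis; so the composition of regular functions is regular. I expect this check to be the only nontrivial step, and it reduces to the elementary inequality manipulation just indicated.
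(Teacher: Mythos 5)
Your proof is correct and follows essentially the same route as the paper: both arguments show that the inner separator maps $\Int(\Qq_{r,s})$ into $\Int(\Qq_{r-k+1,s})$ via the substituted coordinate, deduce regularity and the two-sided strict inequality from there, and obtain the continuous extension by composing the continuous extensions on the closures. The paper merely packages the substitution as an auxiliary map $\Theta$ on a set $\Mm\supset\Qq_{r,s}$, which changes nothing of substance.
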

\begin{proof}
Define 
$$
\Mm:=\{(y_1,\ldots,y_{r-k},y_{r-k+1},\ldots,y_r;z)\in\R^r\times\R^s:\ \max\{y_{r-k+1},\ldots,y_r\}\leq\min\{z_1,\ldots,z_s\}\}.
$$
The rational map 
\begin{multline*}
\Theta:\R^r\times\R^s\dasharrow\R^{r-k+1}\times\R^s,\\ 
(y_1,\ldots,y_{r-k},y_{r-k+1},\ldots,y_r;z)\mapsto(y_1,\ldots,y_{r-k},\phi_{k,s}(y_{r-k+1},\ldots,y_r;z);z)
\end{multline*}
is regular on $\Int(\Mm)$ and extends continuously to $\Mm=\cl(\Int(\Mm))$. Clearly, $\Qq_{r,s}\subset\Mm$. We claim: $\Theta(\Int(\Qq_{r,s}))\subset\Int(\Qq_{r-k+1,s})$. 

If $(y;z)\in\Int(\Qq_{r,s})$, then $\max\{y_1,\ldots,y_r\}<\min\{z_1,\ldots,z_s\}$, so 
$$
\max\{y_{r-k+1},\ldots,y_r\}<\min\{z_1,\ldots,z_s\}
$$ 
and consequently $\phi_{k,s}(y_{r-k+1},\ldots,y_r;z)<\min\{z_1,\ldots,z_s\}$. Thus, 
\begin{equation}\label{mq}
\max\{y_1,\ldots,y_{r-k},\phi_{k,s}(y_{r-k+1},\ldots,y_r;z)\}<\min\{z_1,\ldots,z_s\}
\end{equation}
and $\Theta(y;z)\in\Int(\Qq_{r-k+1,s})$. Therefore $\phi_{r,s}$ is regular on $\Int(\Qq_{r,s})$ and extends continuously to $\Qq_{r,s}$. Let us check: \em $\max\{y_1,\ldots,y_r\}<\phi_{r,s}(y;z)<\min\{z_1,\ldots,z_s\}$ for each $(y;z)\in\Int(\Qq_{r,s})$\em. 

Indeed, by \eqref{mq} we know that $\Theta(y;z)\in\Int(\Qq_{r-k+1,s})$, so
\begin{multline*}
\max\{y_1,\ldots,y_r\}\leq
\max\{y_1,\ldots,y_{r-k},\phi_{k,s}(y_{r-k+1},\ldots,y_r;z)\}\\
<\phi_{r,s}(y;z):=\phi_{r-k+1,s}(y_1,\ldots,y_{r-k},\phi_{k,s}(y_{r-k+1},\ldots,y_r;z);z)
<\min\{z_1,\ldots,z_s\}.
\end{multline*}
We conclude that $\phi_{r,s}$ is a rational separator for $(r,s)$.
\end{proof}

\subsubsection{Existence of rational separators}
We prove next that for each pair of positive integers there exists a rational separator.

\begin{lem}\label{lemma}
For each pair $(r,s)$ of positive integers there exists a rational separator.
\end{lem}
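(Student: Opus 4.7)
The plan is to construct the desired separators by combining the recursive machinery in Remarks~\ref{lemmar} and Lemma~\ref{indstep} with a small collection of explicit base-case formulas. Reading Lemma~\ref{indstep} carefully, the reduction from $(r,s)$ to smaller pairs $(r-k+1,s)$ and $(k,s)$ is strict only when $1 < k < r$, i.e.\ only for $r \geq 3$. Hence the bases that must be constructed by hand are all pairs with $s = 1$, together with the single case $(2,2)$.

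First I would handle $s = 1$ by the single explicit formula
$$\phi_{r,1}(y_1, \ldots, y_r; z_1) := z_1 - \frac{\prod_{i=1}^r(z_1 - y_i)}{1 + \sum_{i=1}^r \prod_{j \neq i}(z_1 - y_j)}.$$
On $\Qq_{r,1}$ every factor $z_1 - y_i$ is non-negative, so the denominator is $\geq 1$, which makes $\phi_{r,1}$ regular on an open neighborhood of $\Qq_{r,1}$ and immediately shows $\phi_{r,1} < z_1$ on the interior. A short algebraic simplification rewrites $\phi_{r,1} - y_k$ as a ratio with numerator $(z_1 - y_k) + \sum_{i \neq k}(z_1 - y_k)^2 \prod_{j \neq i, k}(z_1 - y_j)$, which is strictly positive on $\Int(\Qq_{r,1})$, proving $\phi_{r,1} > y_k$. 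Remarks~\ref{lemmar}(i) then supplies separators $\phi_{1,s}$ for every $s \geq 1$ by transposition.

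Next I would handle $\phi_{2,2}$ via the quadratic rational function
$$\phi_{2,2}(y_1, y_2; z_1, z_2) := \frac{z_1 z_2 - y_1 y_2}{(z_1 - y_1) + (z_2 - y_2)},$$
for which a direct computation yields the clean identities
$$z_a - \phi_{2,2} = \frac{(z_a - y_1)(z_a - y_2)}{(z_1 - y_1) + (z_2 - y_2)}, \qquad \phi_{2,2} - y_b = \frac{(z_1 - y_b)(z_2 - y_b)}{(z_1 - y_1) + (z_2 - y_2)}$$
for $a, b \in \{1, 2\}$, both strictly positive on $\Int(\Qq_{2,2})$. With $\phi_{2,2}$ in hand, iterating Lemma~\ref{indstep} with $k = 2$ produces $\phi_{s,2}$ for every $s \geq 2$, and transposition via Remarks~\ref{lemmar}(i) yields $\phi_{2,s}$ for every $s \geq 2$. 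A final induction on $r$, again using Lemma~\ref{indstep} with $k = 2$ and taking $\phi_{2,s}$ as base, then delivers $\phi_{r,s}$ for every $r, s \geq 2$, completing the construction.

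The step I expect to demand most care is the verification that each base-case rational function extends continuously from $\Int(\Qq_{r,s})$ to the full closure $\Qq_{r,s}$. For $\phi_{r,1}$ this is painless because the denominator stays $\geq 1$ throughout $\Qq_{r,1}$; but for $\phi_{2,2}$ the denominator vanishes on the diagonal $\{y_1 = y_2 = z_1 = z_2\}$. The identities displayed above save the day: they exhibit $z_a - \phi_{2,2}$ and $\phi_{2,2} - y_b$ as ratios whose numerators vanish to strictly higher order than the denominator near the diagonal, so $\phi_{2,2}$ extends continuously (taking the value $c$ at each diagonal point $(c,c,c,c)$). Once the base cases are verified to extend continuously to $\Qq_{r,s}$, Lemma~\ref{indstep} automatically preserves this property through every inductive step.
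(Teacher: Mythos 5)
Your proposal is correct and follows essentially the same route as the paper: the identical base separator $\phi_{2,2}=\frac{z_1z_2-y_1y_2}{(z_1+z_2)-(y_1+y_2)}$, then Lemma~\ref{indstep} with $k=2$ together with the transposition of Remarks~\ref{lemmar}(i) to propagate to all pairs with $\min\{r,s\}\geq 2$. The only (harmless) divergence is at $\min\{r,s\}=1$, where the paper reduces $(r,1)$ to $(r,2)$ via the variable-duplication embedding of Remarks~\ref{lemmar}(ii), whereas you supply a new explicit formula whose denominator stays $\geq 1$ on $\Qq_{r,1}$ — both work.
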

\begin{proof}
The proof is conducted by induction on $t:=\min\{r,s\}$. We begin with some initial cases:

\noindent{\sc Case $t=r=s=2$.} Define
$$
\phi_{2,2}(y_1,y_2;z_1,z_2):=\frac{z_1z_2-y_1y_2}{(z_1+z_2)-(y_1+y_2)},
$$
which is regular in $\Int(\Qq_{2,2})$. Let us prove that if $(y_1,y_2;z_1,z_2)\in\Int(\Qq_{2,2})$, then
\begin{equation}\label{2222}
y_i<\phi_{2,2}(y_1,y_2;z_1,z_2)<z_j\quad\text{ for $i=1,2$ and $j=1,2$.}
\end{equation}
As $\phi_{2,2}$ is symmetric with respect to the variables $(y_1,y_2)$ and $(z_1,z_2)$, we only consider $i=1,j=1$. We have to prove
$$
y_1((z_1+z_2)-(y_1+y_2))<z_1z_2-y_1y_2<z_1((z_1+z_2)-(y_1+y_2)).
$$
The first inequality is equivalent to
$$
z_1z_2>y_1z_1+y_1z_2-y_1^2\ \iff\ (z_1-y_1)(z_2-y_1)>0,
$$
hence it holds. The second inequality is equivalent to
$$
-y_1y_2<z_1^2-y_1z_1-y_2z_1\ \iff\ z_1^2-y_1z_1-y_2z_1+y_1y_2=(z_1-y_1)(z_1-y_2)>0
$$
and it also holds. Note that $\Qq_{2,2}\cap\{z_1+z_2-y_1-y_2=0\}=\{z_1=z_2=y_1=y_2\}$. By \eqref{2222} the rational map $\phi_{2,2}$ extends continuously to $\Qq_{2,2}$ as
$$
\Phi_{2,2}(y_1,y_2;z_1,z_2)=\begin{cases}
\phi_{2,2}(y_1,y_2;z_1,z_2)& \text{if $z_1+z_2-y_1-y_2>0$,}\\
z_1 & \text{if $z_1=z_2=y_1=y_2$.}
\end{cases}
$$

\noindent{\sc Case $t=2$.} By Remark \ref{lemmar}(i) we may assume that $r\geq s=2$. We proceed by induction on $r$. We have constructed above a rational separator for $(2,2)$, so the initial case $r=2$ has been already approached. By induction hypothesis there exists a rational separator $\phi_{r-1,2}$ for $(r-1,2)$ if $r\geq3$. By Lemma \ref{indstep} the function
$$
\phi_{r,2}(y;z)=\phi_{r-1,2}(y_1,\ldots,y_{r-2},\phi_{2,2}(y_{r-1},y_r;z);z)
$$
is a rational separator for $(r,2)$.

\noindent{\sc Case $t\geq3$.} By Remark \ref{lemmar}(i) we may assume that $s\geq r\geq3$. Using Remark \ref{lemmar}(i) and the construction for $t=2$ we have a rational separator for $(2,s)$ if $s\geq2$. By induction hypothesis there exists a rational separator $\phi_{r-1,s}$ for $(r-1,s)$ if $r\geq3$. By Lemma \ref{indstep} the function
$$
\phi_{r,s}(y;s)=\phi_{r-1,s}(y_1,\ldots,y_{r-2},\phi_{2,s}(y_{r-1},y_r;z);z)
$$
is a rational separator for $(r,s)$.

\noindent{\sc Case $t=1$.} By Remark \ref{lemmar}(ii)
\begin{align*}
&\phi_{1,s}(y_1;z)=\phi_{2,s}(\rho_{1,s}^{1,0}(y_1;z)),\\
&\phi_{r,1}(y;z_1)=\phi_{r,2}(\rho_{r,1}^{0,1}(y;z_1))
\end{align*}
are respective rational separators for $(1,s)$ and $(r,1)$ if $r,s\geq1$.
\end{proof}

\subsection{Rational separation of the attic and the basement of a skyscraper}
Let $\pi_n:\R^n\to\R^{n-1},\ (x_1,\ldots,x_n)\mapsto(x_1,\ldots,x_{n-1})$ be the projection onto the first $(n-1)$ coordinates. The following position for an $n$-dimensional convex polyhedron $\pol\subset\R^n$ guarantees that the differences $(\Int(\p)\times\R)\setminus\pol$ and $(\Int(\p)\times\R)\setminus\Int(\pol)$, where $\p:=\pi_n(\pol)$, have each one two connected components.

\begin{define}
Let $\pol\subset\R^n$ be a convex polyhedron. We say that $\pol$ is in \em $\vec{\ell}_n$-bounded position \em if the intersection of $\pol$ with any vertical line $\ell$ is either empty or a bounded interval. 
\end{define}
\begin{remark}\label{cc}
One proves straightforwardly: \em $(\Int(\p)\times\R)\setminus\pol$ has two connected components\em.
\end{remark}

The following result provides an easy test to determine if a convex polyhedron is in $\vec{\ell}_n$-bounded position. As an straightforward consequence one shows that each $n$-dimensional convex polyhedron $\pol\subset\R^n$ with at least two facets can be placed in $\vec{\ell}_n$-bounded position.

\begin{lem}\label{test}
Let $\pol\subset\R^n$ be an $n$-dimensional convex polyhedron and let $\ell$ be a vertical line. Suppose that $\ell\cap\pol$ is a non-empty bounded segment, which may reduce to a point. Then $\pol$ is in $\vec{\ell}_n$-bounded position.
\end{lem}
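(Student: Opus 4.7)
The plan is a straightforward proof by contradiction using the recession cone of $\pol$. Suppose that $\pol$ is not in $\vec{\ell}_n$-bounded position, so that there exists a vertical line $\ell'$ for which $\ell'\cap\pol$ is non-empty and unbounded. Since $\pol$ is closed and convex, $\ell'\cap\pol$ is a closed convex subset of the line $\ell'$, hence an interval in the direction of $\vec{e}_n$; being non-empty and unbounded it must contain a ray of the form $\{q+t\vec{w}:t\ge 0\}$ for some $q\in\pol$ and $\vec{w}\in\{\vec{e}_n,-\vec{e}_n\}$.

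By the base-point-independent description of the recession cone recalled in Section~\ref{s2}, the existence of such a ray forces $\vec{w}\in\conv{\pol}{}$. Consequently, for every $p\in\pol$ the whole ray $\{p+t\vec{w}:t\ge 0\}$ is contained in $\pol$. In particular, applying this to any point $p\in\ell\cap\pol$ (which exists by hypothesis) and noting that $p+t\vec{w}\in\ell$ for all $t$ since $\ell$ is a vertical line, we conclude that $\ell\cap\pol$ contains an unbounded ray, contradicting our assumption that $\ell\cap\pol$ is a bounded segment.

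The only conceptual step is the translation between the concrete description ``$\ell'\cap\pol$ contains a vertical ray from $q$'' and the statement ``$\pm\vec{e}_n\in\conv{\pol}{}$''; this is standard for closed convex sets and is recorded in the preliminaries. I do not foresee any genuine obstacle: everything reduces to the base-point independence of the recession cone of a closed convex polyhedron.
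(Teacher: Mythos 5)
Your proof is correct and is essentially the paper's own argument written out in full: the paper simply observes that the hypothesis forces $\vec{e}_n,-\vec{e}_n\not\in\conv{\pol}{}$ and concludes via the base-point independence of the recession cone, which is exactly the mechanism you use (in contrapositive form). No gaps.
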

\begin{proof}
Under the hypotheses $\vec{e}_n,-\vec{e}_n\not\in\conv{\pol}{}$, so the intersection of $\pol$ with any vertical line $\ell$ is either empty or a bounded interval.
\end{proof}

\begin{cor}\label{posfacet}
Let $\pol\subset\R^n$ be an $n$-dimensional convex polyhedron and let $\Ff$ be a facet of $\pol$ that has itself at least two facets. Then $\pol$ can be placed in $\vec{\ell}_n$-bounded position in such a way that $\Ff\subset\{\x_{n-1}=0\}$ and $\pol\subset\{\x_{n-1}\leq0\}$.
\end{cor}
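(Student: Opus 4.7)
The strategy is to reduce to Lemma \ref{test} by a judicious choice of coordinates. First, by an affine change of coordinates one places $\Ff$ in the hyperplane $\{\x_{n-1}=0\}$ with $\pol\subset\{\x_{n-1}\leq 0\}$. After this is done there remains the freedom of applying any orthogonal transformation of the hyperplane $\{\x_{n-1}=0\}$ (spanned by $\vec{e}_1,\dots,\vec{e}_{n-2},\vec{e}_n$). Such a rotation fixes $\vec{e}_{n-1}$, hence preserves both the placements $\Ff\subset\{\x_{n-1}=0\}$ and $\pol\subset\{\x_{n-1}\leq0\}$. The aim is to exploit this residual freedom to rotate $\vec{e}_n$ into a position where $\pm\vec{e}_n\notin\conv{\pol}{}$, for then the vertical line through any point of $\Ff$ meets $\pol$ in a non-empty bounded segment and Lemma \ref{test} applies.

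As $\pol\subset\{\x_{n-1}\leq0\}$ and $\Ff=\pol\cap\{\x_{n-1}=0\}$, a straightforward computation shows
$$
\conv{\pol}{}\cap\{\x_{n-1}=0\}=\conv{\Ff}{}.
$$
Therefore, for any direction $\vec{u}$ inside $\{\x_{n-1}=0\}$ one has $\vec{u}\in\conv{\pol}{}$ if and only if $\vec{u}\in\conv{\Ff}{}$. So it suffices to find a unit vector $\vec{u}\in\{\x_{n-1}=0\}$ with $\pm\vec{u}\notin\conv{\Ff}{}$, i.e.\ a direction outside $\conv{\Ff}{}\cup(-\conv{\Ff}{})$; a subsequent rotation of $\{\x_{n-1}=0\}$ carries $\vec{u}$ to the new $\vec{e}_n$.

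The key step, and the place where I expect the real (though still modest) content of the argument to lie, is showing that $\conv{\Ff}{}\cup(-\conv{\Ff}{})$ is a proper subset of $\{\x_{n-1}=0\}\cong\R^{n-1}$ whenever $\Ff$ has at least two facets. Consider the minimal family of half-spaces of $\R^{n-1}$ defining $\Ff$, and let $a_1,\dots,a_k$ be the corresponding inward normals ($k\geq2$ by hypothesis). Minimality prevents all the $a_i$ from being positive multiples of one fixed vector. Thus either two of the $a_i$ are non-parallel, in which case $\conv{\Ff}{}$ is contained in the proper polyhedral wedge $\vec{H}_i^+\cap\vec{H}_j^+$, whose union with its opposite clearly misses directions in $\R^{n-1}$; or all the $a_i$ are parallel with both signs occurring, forcing $\Ff$ to be a slab and $\conv{\Ff}{}$ to be an $(n-2)$-dimensional linear subspace, which together with its negative remains a proper subset of $\R^{n-1}$. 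In either case the desired $\vec{u}$ exists.

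After performing the rotation that sends $\vec{u}$ to $\vec{e}_n$, the placements $\Ff\subset\{\x_{n-1}=0\}$ and $\pol\subset\{\x_{n-1}\leq0\}$ are unchanged, and $\pm\vec{e}_n\notin\conv{\pol}{}$. Consequently the vertical line through any point of $\Ff$ intersects $\pol$ in a non-empty bounded segment, and Lemma \ref{test} yields that $\pol$ is in $\vec{\ell}_n$-bounded position, as required.
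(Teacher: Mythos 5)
Your proof is correct and takes essentially the same route as the paper: both reduce to Lemma \ref{test} by using the residual coordinate freedom inside the hyperplane $\{\x_{n-1}=0\}$ to make some vertical line meet $\Ff$ (hence $\pol$) in a non-empty bounded segment. The paper phrases this as placing $\Ff$ itself in $\vec{\ell}_n$-bounded position within $\{\x_{n-1}=0\}$ and leaves the underlying recession-cone argument implicit; you spell out exactly that detail, namely that since $\Ff$ has at least two facets there is a unit vector $\vec{u}$ in the hyperplane with $\pm\vec{u}\notin\conv{\Ff}{}=\conv{\pol}{}\cap\{\x_{n-1}=0\}$, which a rotation fixing $\vec{e}_{n-1}$ carries to $\vec{e}_n$.
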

\begin{proof}
After an affine change of coordinates we may assume that $\Ff\subset\{\x_{n-1}=0\}$ and $\pol\subset\{\x_{n-1}\leq0\}$. As $\Ff$ has itself at least two facets, there exist an affine change of coordinates that keeps invariant the half space $\x_{n-1}\leq0$ and such that $\Ff$ is in $\vec{\ell}_n$-bounded position (inside $\{\x_{n-1}=0\}$). By Lemma \ref{test} also $\pol$ is in $\vec{\ell}_n$-bounded position.
\end{proof}

\subsubsection{Convex polyhedron placed in $\vec{\ell}_n$-bounded position}\label{equations}
Let $\pol\subset\R^n$ be an $n$-dimensional convex polyhedron in $\vec{\ell}_n$-bounded position and let $\p:=\pi_n(\pol)\subset\R^{n-1}$ be its projection onto the hyperplane $\{x_n=0\}$. Let $\Ff_1,\ldots,\Ff_m$ be the facets of $\pol$ and let $H_i:=\{{\tth}_i=0\}$ be the hyperplane generated by $\Ff_i$. Assume $\pol=\bigcap_{i=1}^mH_i^+$ and set $\x:=(\x_1,\dots,\x_{n-1},\x_n):=(\x',\x_n)$. We can write
\begin{align}
&{\tth}_i(\x)=-{\tt a}_i(\x')+\x_n\quad\forall\,i=1,\ldots,r,\label{eq:h1}\\
&{\tth}_{r+j}(\x)={\tt b}_j(\x')-\x_n\quad\forall\,j=1,\ldots,s,\label{eq:h2}\\
&{\tth}_{r+s+k}(\x)={\tt c}_k(\x')\quad\forall\,k=1,\ldots,m-r-s.\label{eq:h3}
\end{align}
Equations \eqref{eq:h1} correspond to the non-vertical, lower facets of $\pol$ making up its `floor', while equations \eqref{eq:h2} and \eqref{eq:h3} correspond respectively to the non-vertical, upper facets making up its `ceiling' and those that constitute its vertical `walls'. As $\pol\subset\R^n$ is in $\vec{\ell}_n$-bounded position, we have $r,s\geq1$. Observe that $\pol=\pol_1\cap\pol_2$ where
\begin{align*}
&\pol_1:=\{(x',x_n)\in\R^n:\ \max\{{\tt a}_1(x'),\ldots,{\tt a}_r(x')\}\leq x_n\leq\min\{{\tt b}_1(x'),\ldots,{\tt b}_s(x')\}\},\\
&\pol_2:=\{(x',x_n)\in\R^n:\ {\tt c}_1(x')\geq0,\ldots,{\tt c}_{m-r-s}(x')\geq0\}.
\end{align*}
Notice that
\begin{align*}
&\pi_n(\pol_1)=\{x'\in\R^{n-1}:\ \max\{{\tt a}_1(x'),\ldots,{\tt a}_r(x')\}\leq\min\{{\tt b}_1(x'),\ldots,{\tt b}_s(x')\}\},\\
&\pi_n(\pol_2)=\{x'\in\R^{n-1}:\ {\tt c}_1(x')\geq0,\ldots,{\tt c}_{m-r-s}(x')\geq0\}.
\end{align*}

By \cite[II.Thm.6.5]{r} $\Int(\pol)=\Int(\pol_1)\cap\Int(\pol_2)$. As $\pi_n^{-1}(\pi_n(\pol_2))=\pol_2$, the equality $\pi_n(\pol)=\pi_n(\pol_1)\cap\pi_n(\pol_2)$ holds. Analogously, $\pi_n(\Int(\pol))=\pi_n(\Int(\pol_1))\cap\pi_n(\Int(\pol_2))$. In addition
\begin{align*}
&\Int(\pol_1)=\{(x',x_n)\in\R^n:\ \max\{{\tt a}_1(x'),\ldots,{\tt a}_r(x')\}<x_n<\min\{{\tt b}_1(x'),\ldots,{\tt b}_s(x')\}\},\\
&\Int(\pol_2)=\{(x',x_n)\in\R^n:\ {\tt c}_1(x')>0,\ldots,{\tt c}_{m-r-s}(x')>0\}.
\end{align*}
Notice that
\begin{align*}
&\pi_n(\Int(\pol_1))=\{x'\in\R^{n-1}:\ \max\{{\tt a}_1(x'),\ldots,{\tt a}_r(x')\}<\min\{{\tt b}_1(x'),\ldots,{\tt b}_s(x')\}\},\\
&\pi_n(\Int(\pol_2))=\{x'\in\R^{n-1}:\ {\tt c}_1(x')>0,\ldots,{\tt c}_{m-r-s}(x')>0\}.
\end{align*}

By \cite[II.Thm.6.6]{r} we have $\Int(\p)=\pi_n(\Int(\pol))$. Thus 
\begin{equation}\label{intk}
\Int(\p)=\pi_n(\Int(\pol))=\pi_n(\Int(\pol_1))\cap\pi_n(\Int(\pol_2)).
\end{equation}

By Remark \ref{cc} the difference $(\Int(\p)\times\R)\setminus\pol$ has two connected components. Namely
\begin{align*}
&\Cc^-:=\{(x',x_n)\in\Int(\p)\times\R:\ x_n<\max\{{\tt a}_1(x'),\ldots,{\tt a}_r(x')\}\}\quad\text{(basement)},\\
&\Cc^+:=\{(x',x_n)\in\Int(\p)\times\R:\ \min\{{\tt b}_1(x'),\ldots,{\tt b}_s(x')\}<x_n\}\quad\text{(attic)}.
\end{align*}

The next result provides a rational function $\frac{{\ttf}_2}{{\ttf}_1}\in\R(\x_1,\ldots,\x_{n-1})$ that separates $\Cc^-,\Cc^+$ and that is regular on $\Int(\p)$.

\begin{lem}[Rational separation of the attic and the basement of a skyscraper]\label{prop:polP}
There exists a polynomial ${\tt P}(\x):={\ttf}_1(\x')\x_n-{\ttf}_2(\x')\in\R[\x',\x_n]$ such that 
\begin{itemize}
\item $\{{\ttf}_1=0\}\subset\{{\ttf}_2=0\}$, 
\item ${\tt P}|_{\Cc^-}<0$ and ${\tt P}|_{\Cc^+}>0$,
\item ${\ttf}_1|_{\Int(\p)}>0$ and ${\ttf}_1|_{\p}\geq0$.
\end{itemize}
\end{lem}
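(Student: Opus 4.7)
The plan is to specialize the rational separator $\phi_{r,s}$ supplied by Lemma \ref{lemma}. Note first that $r,s\ge 1$, since $\pol$ being in $\vec{\ell}_n$-bounded position forces both `floor' and `ceiling' facets to exist. Substituting $y_i:={\tt a}_i(x')$ and $z_j:={\tt b}_j(x')$ into $\phi_{r,s}$ yields a rational function $\Phi(x')\in\R(x')$ that is regular on $\Int(\p)$ and satisfies the key sandwich
$$
\max_{1\le i\le r}{\tt a}_i(x')\;<\;\Phi(x')\;<\;\min_{1\le j\le s}{\tt b}_j(x')\qquad(x'\in\Int(\p)),
$$
because for $x'\in\Int(\p)\subset\pi_n(\Int(\pol_1))$ the tuple $({\tt a}(x');{\tt b}(x'))$ lies in $\Int(\Qq_{r,s})$.

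To convert $\Phi$ into the required polynomial data, I would first write $\phi_{r,s}=A/B$ in lowest terms as polynomials in $\R[y;z]$. Because $\phi_{r,s}$ is regular on the open set $\Int(\Qq_{r,s})$, which is convex (intersection of open half-spaces) and hence connected, $B$ is nowhere zero and of constant sign there; after multiplying $A$ and $B$ by $-1$ if necessary, I may assume $B>0$ on $\Int(\Qq_{r,s})$. Set
$$
N(x'):=A({\tt a}_1(x'),\dots,{\tt a}_r(x');{\tt b}_1(x'),\dots,{\tt b}_s(x')),\qquad D(x'):=B({\tt a}_1(x'),\dots,{\tt a}_r(x');{\tt b}_1(x'),\dots,{\tt b}_s(x')),
$$
which are polynomials in $\R[x']$. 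Then $D>0$ on $\Int(\p)$, and $D\ge 0$ on $\p$ by continuity.

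The definitive choice is $\ttf_1:=D^2$, $\ttf_2:=N\,D$, and ${\tt P}(x):=\ttf_1(x')x_n-\ttf_2(x')=D(x')\bigl(D(x')x_n-N(x')\bigr)$. The three bulleted properties are then immediate: $\{\ttf_1=0\}=\{D=0\}\subset\{D=0\}\cup\{N=0\}=\{\ttf_2=0\}$; $\ttf_1=D^2\ge 0$ holds everywhere, and $\ttf_1>0$ on $\Int(\p)$ where $D>0$; finally, for $(x',x_n)\in\Cc^-$ the sandwich together with $D(x')>0$ gives $x_n<\max_i{\tt a}_i(x')<N(x')/D(x')$, hence $D\,x_n-N<0$ and ${\tt P}<0$, while the argument on $\Cc^+$ is symmetric.

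The point requiring most care will be the sign analysis of $B$ on $\Int(\Qq_{r,s})$: it is essential to take $A,B$ coprime so that $B$ does not vanish on the regularity locus of $\phi_{r,s}$, and to invoke the convexity (hence connectedness) of $\Int(\Qq_{r,s})$ to be entitled to a global sign normalization. The squaring trick $\ttf_1=D^2$ is what simultaneously enforces the pole-set-inside-zero-set condition as subsets of all of $\R^{n-1}$ and guarantees $\ttf_1\ge 0$ globally; everything else reduces to direct substitution and a sign check.
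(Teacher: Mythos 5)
Your proposal is correct and follows essentially the same route as the paper: both specialize the rational separator of Lemma \ref{lemma} at $({\tt a}_1(\x'),\dots,{\tt a}_r(\x');{\tt b}_1(\x'),\dots,{\tt b}_s(\x'))$ and use the squaring trick ${\ttf}_1=D^2$, ${\ttf}_2=ND$ (the paper performs the squaring on ${\ttgg}_1'$ before substitution, which yields the same polynomials). Your justification that the denominator of a lowest-terms representation is nonvanishing and of constant sign on the connected set $\Int(\Qq_{r,s})$ is a slightly more explicit version of the paper's ``we may assume ${\ttgg}_1'$ has constant sign'' step.
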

\begin{proof}
Let $\phi_{r,s}(y;z)=\frac{{\ttgg}_2'(y;z)}{{\ttgg}_1'(y;z)}$, where ${\ttgg}_1',{\ttgg}_2'\in\R[\y;\z]$, be a rational separator for $(r,s)$. Recall that $\phi_{r,s}$ is regular on $\Int(\Qq_{r,s})$ and extends to a continuous semialgebraic function $\Phi_{r,s}$ on $\Qq_{r,s}$. As $\Qq_{r,s}$ is by definition a convex polyhedron and $\phi_{r,s}$ is regular on $\Int(\Qq_{r,s})$, we may assume that ${\ttgg}_1'$ has constant sign on $\Int(\Qq_{r,s})$. Write ${\ttgg}_1:=({\ttgg}_1')^2$ and ${\ttgg}_2:={\ttgg}_2'{\ttgg}_1'$. Observe that $\{{\ttgg}_1=0\}\subset\{{\ttgg}_2=0\}$, $\phi_{r,s}=\frac{{\ttgg}_2(y;z)}{{\ttgg}_1(y;z)}$ and ${\ttgg}_1$ is strictly positive on $\Int(\Qq_{r,s})$. Define
$$
{\tt P}(\x):={\ttf}_1(\x')\x_n-{\ttf}_2(\x')\quad\text{where}\quad{\ttf}_\ell(\x')={\ttgg}_\ell({\tt a}_1(\x'),\cdots,{\tt a}_r(\x');{\tt b}_1(\x'),\cdots,{\tt b}_s(\x'))
$$
for $\ell=1,2$. Note that $\{{\ttf}_1=0\}\subset\{{\ttf}_2=0\}$. As $\phi_{r,s}$ is a rational separator for $(r,s)$ and $\Int(\p)\subset\pi_n(\Int(\pol_1))$, we deduce: \em if $x'\in\Int(\p)$, then
$$
\max\{{\tt a}_1(x'),\cdots,{\tt a}_r(x')\}<\frac{{\ttf}_2(x')}{{\ttf}_1(x')}<\min\{{\tt b}_1(x'),\cdots,{\tt b}_s(x')\}.
$$
Besides, ${\ttf}_1(x'):=({\ttgg}_1')^2({\tt a}_1(x'),\cdots,{\tt a}_r(x');{\tt b}_1(x'),\cdots,{\tt b}_s(x'))>0$ for each $x'\in\Int(\p)$\em.

Now, if $x:=(x',x_n)\in\Cc^-$, we have $x'\in\Int(\p)$ and there exists some $i=1,\ldots,r$ such that $x_n<{\tt a}_i(x')$, so ${\tt P}(x)<0$. Similarly, if $x:=(x',x_n)\in\Cc^+$, we have $x'\in\Int(\p)$ and there exists some $j=1,\ldots,s$ such that ${\tt b}_j(x')<x_n$, so ${\tt P}(x)>0$, as required.
\end{proof}

Some technicalities arising from the proof of Theorem \ref{main2} force us to find a kind of analogous result to Lemma \ref{prop:polP} when we are dealing with non-degenerate convex polyhedra not placed in $\vec{\ell}_n$-bounded position.

\subsubsection{Non-degenerate convex polyhedra not placed in $\vec{\ell}_n$-bounded position}\label{equations2}

Let $\pol\subset\R^n$ be a non-degenerate $n$-dimensional convex polyhedron not placed in $\vec{\ell}_n$-bounded position. Let $\Ff_1,\ldots,\Ff_m$ be the facets of $\pol$ and let $H_i:=\{{\tth}_i=0\}$ be the hyperplane generated by $\Ff_i$. Assume that $\pol=\bigcap_{i=1}^mH_i^+$. We may write
\begin{align*}
&{\tth}_i(\x)={\tt b}_i(\x')+\veps_i\x_n\quad\forall\,i=1,\ldots,s,\\
&{\tth}_{s+k}(\x)={\tt c}_k'(\x')\quad\forall\,k=1,\ldots,m-s,
\end{align*}
where $\veps_i=\pm1$. As $\pol$ is non-degenerate, $s\geq1$ and as $\pol\subset\R^n$ is not placed in $\vec{\ell}_n$-bounded position, all $\veps_i$ are either equal to $1$ or $-1$ (that is, all non-vertical facets constitute either the floor or the ceiling of $\pol$). Assume that all $\veps_i=-1$. Observe that $\pol=\pol_1\cap\pol_2$ where
\begin{align*}
&\pol_1:=\{(x',x_n)\in\R^n:\ x_n\leq\min\{{\tt b}_1(x'),\ldots,{\tt b}_s(x')\}\},\\
&\pol_2:=\{(x',x_n)\in\R^n:\ {\tt c}_1(x')\geq0,\ldots,{\tt c}_{m-s}(x')\geq0\}.
\end{align*}
In addition
\begin{align*}
&\Int(\pol_1)=\{(x',x_n)\in\R^n:\ x_n<\min\{{\tt b}_1(x'),\ldots,{\tt b}_s(x')\}\},\\
&\Int(\pol_2)=\{(x',x_n)\in\R^n:\ {\tt c}_1(x')>0,\ldots,{\tt c}_{m-s}(x')>0\}.
\end{align*}

Notice that $\pi_n(\pol_1)=\R^{n-1}$ and
$$
\pi_n(\pol_2)=\{x'\in\R^{n-1}:\ {\tt c}_1(x')\geq0,\ldots,{\tt c}_{m-s}(x')\geq0\}.
$$
As $\pi_n^{-1}(\pi_n(\pol_2))=\pol_2$, we have $\p:=\pi_n(\pol)=\pi_n(\pol_1)\cap\pi_n(\pol_2)=\pi_n(\pol_2)$. Thus, as $\pi_n(\Int(\pol))=\pi_n(\Int(\pol_1))\cap\pi_n(\Int(\pol_2))=\pi_n(\Int(\pol_2))$,
\begin{multline*}
\Int(\p)=\Int(\pi_n(\pol))\\
=\{x'\in\R^{n-1}:\ {\tt c}_1(x')>0,\ldots,{\tt c}_{m-s}(x')>0\}=\pi_n(\Int(\pol_2))=\pi_n(\Int(\pol)).
\end{multline*}
In particular, $\pol_2=\p\times\R$ and $\Int(\pol_2)=\Int(\p)\times\R$. Notice that: \em the semialgebraic set 
$$
\Cc^+:=(\Int(\p)\times\R)\setminus\pol=\{(x',x_n)\in\pol_2:\ \min\{{\tt b}_1(x'),\ldots,{\tt b}_s(x')\}<\,x_n\}
$$ 
is connected\em.

\begin{lem}\label{prop:polP2}
There exists a polynomial ${\tt P}(\x):={\ttf}_1(\x')\x_n-{\ttf}_2(\x')\in\R[\x',\x_n]$ such that ${\ttf}_1(\x')=1$, $-{\ttf}_2>0$ on $\R^{n-1}$ and ${\tt P}|_{\Cc^+}>0$.
\end{lem}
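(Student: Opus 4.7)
The plan is to construct the polynomial ${\tt P}$ explicitly and essentially by hand, bypassing the rational separator machinery of Lemma \ref{prop:polP}. The point is that in the present setting we only have ``ceiling'' linear forms ${\tt b}_1,\ldots,{\tt b}_s$ (no ${\tt a}_i$'s), so no true separation problem arises: we just need a polynomial lying below every ${\tt b}_i$ on $\R^{n-1}$ and strictly negative there. A single quadratic barrier will do.

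First I would set ${\ttf}_1(\x'):=1$ and look for ${\ttf}_2$ of the form
$$
{\ttf}_2(\x'):=-N-\|\x'\|^2=-N-\x_1^2-\cdots-\x_{n-1}^2
$$
for a constant $N>0$ to be fixed. Since each ${\tt b}_i$ is affine in $\x'$, the polynomial
$$
Q_i(\x'):=N+\|\x'\|^2+{\tt b}_i(\x')
$$
is a quadratic with Hessian $2I_{n-1}$, which is positive definite; hence $Q_i$ attains a global minimum on $\R^{n-1}$, and elementary completion of squares shows this minimum depends affinely on $N$ with positive slope. I would then choose $N$ large enough (depending on the finitely many coefficients of ${\tt b}_1,\ldots,{\tt b}_s$) so that $Q_i(\x')>0$ on $\R^{n-1}$ for every $i=1,\ldots,s$. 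Equivalently,
$$
-N-\|x'\|^2<{\tt b}_i(x')\qquad\forall\,x'\in\R^{n-1},\ \forall\,i=1,\ldots,s.
$$

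Having fixed such $N$, I would verify the three conclusions of the lemma for ${\tt P}(\x)={\ttf}_1(\x')\x_n-{\ttf}_2(\x')=\x_n+N+\|\x'\|^2$. The first is immediate: ${\ttf}_1=1$. The second is also trivial: $-{\ttf}_2=N+\|\x'\|^2\geq N>0$ on $\R^{n-1}$. For the third, let $(x',x_n)\in\Cc^+$, so in particular $x_n>\min\{{\tt b}_1(x'),\ldots,{\tt b}_s(x')\}$; by the choice of $N$,
$$
x_n>\min_i{\tt b}_i(x')>-N-\|x'\|^2,
$$
which is precisely ${\tt P}(x',x_n)>0$, as required.

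There is no real obstacle in this proof; the whole content lies in the observation that, since no lower linear bounds ${\tt a}_i$ are present, one does not need to interpolate between two families of linear forms but only to dominate the ${\tt b}_i$'s from below globally on $\R^{n-1}$, and a quadratic polynomial $-N-\|\x'\|^2$ does this as soon as $N$ is taken large enough.
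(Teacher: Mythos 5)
Your proof is correct and follows essentially the same route as the paper: both arguments take ${\ttf}_1=1$ and choose for $-{\ttf}_2$ an explicit, globally positive polynomial in $\x'$ dominating $\max_j\{-{\tt b}_j\}$, the paper using $\sum_{j=1}^s\frac{{\tt b}_j^2(\x')+1}{2}$ (so no constant $N$ needs to be chosen) where you use $N+\|\x'\|^2$ with $N$ large. The difference is purely cosmetic.
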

\begin{proof}
Let ${\ttf}_2(\x'):=-\sum_{j=1}^s\frac{{\tt b}_j^2(\x')+1}{2}$ and ${\tt P}(\x):=\x_n-{\ttf}_2(\x')$. Observe that if $x=(x',x_n)\in\Cc^+:=(\Int(\p)\times\R)\setminus\pol$, then $x_n-\min\{{\tt b}_1(x'),\ldots,{\tt b}_s(x')\}>0$. Thus
$$
-x_n<-\min\{{\tt b}_1(x'),\ldots,{\tt b}_s(x')\}=\max\{-{\tt b}_1(x'),\ldots,-{\tt b}_s(x')\}<\sum_{j=1}^s\frac{{\tt b}_j^2(x')+1}{2}
$$
for each $x'\in\R^{n-1}$, so ${\tt P}(x',x_n)>0$ for each $(x',x_n)\in\Cc^+$.
\end{proof}

\section{Complements of convex polyhedra}\label{s5}

In this section we prove constructively Theorem \ref{main2}. We develop first in~\ref{ss:tools} some basic tools. In this regard, Lemma~\ref{prop:face} describes the properties of polynomial maps ${\tt T}_\pol$ (see \eqref{T}), which involve the rational separators. The ideal situation would be that, given a convex polyhedron $\pol$ and one of its facets $\Ff_i$, a polynomial map ${\tt T}_{\pol_{i,\times}}$ would map $\R^n\setminus\pol_{i,\times}$ onto $\R^n\setminus\pol$ (see~\ref{pldrfc} for the definition of $\pol_{i,\times}$). This would allow a neat inductive proof of Theorem~\ref{main2}. However, the pole set of the involved rational separator produces some difficulties. To take care of them we introduce Corollaries~\ref{cor:convex1}, \ref{cor:convex2} and \ref{cor:convex3}. We next consider the complement $\R^n\setminus\Gg_0$ of the interior $\Gg_0$ of a convex polyhedron that contains $\pol$ tightly (in a sense to be described in~\ref{ss:teor4}). This complement is by Theorem~\ref{main1} a polynomial image of $\R^n$. Then in Theorem~\ref{nd} we apply a sequence of polynomial maps of type~\eqref{T} whose images progressively fill the remaining gap $\Gg_0\setminus\pol$, until we finally accomplish to represent $\R^n\setminus\pol$ as the image of a finite composition of polynomial maps. We hope this brief explanation will soften the technicalities of the process.

\subsection{Tools for the inductive process}\label{ss:tools}
Let $\pol\subset\R^n$ be a non-degenerate $n$-dimensional convex polyhedron and let $\p:=\pi_n(\pol)$. Assume that $\vec{e}_n\not\in\conv{\pol}{}$. Let 
\begin{equation}\label{P}
{\tt P}(\x',\x_n):={\ttf}_1(\x')\x_n-{\ttf}_2(\x')\in\R[\x',\x_n]
\end{equation}
be a polynomial satisfying the conditions of Lemma \ref{prop:polP} if $\pol$ is placed in $\vec{\ell}_n$-bounded position and the conditions of Lemma \ref{prop:polP2} otherwise. Write $\pol:=\bigcap_{i=1}^m\{{\tth}_i\geq0\}$ (minimal presentation) where each ${\tth}_i$ is a linear equation. We may assume that the coefficient of $\x_n$ is non-zero for ${\tth}_i$ if and only if $i=1,\ldots,d\leq m$. As $\pol$ is non-degenerate, $d\geq1$. Consider the polynomial
\begin{equation}\label{L}
{\ttL}:=\prod_{i=1}^m{\tth}_i\in\R[\x]
\end{equation}
and the polynomial map
\begin{equation}\label{T}
{\tt T}_\pol:\R^n\to\R^n,\ x:=(x',x_n)=(x_1,\ldots,x_{n-1},x_n)\mapsto(x',x_n-x_{n-1}{\ttL}^2(x){\tt P}(x)).
\end{equation}
Note that if $H_i:=\{{\tth}_i=0\}$ then ${\tt T}_\pol|_{H_i}=\id_{H_i}$ because ${\ttL}|_{H_i}\equiv0$. Fix $a':=(a_1,\ldots,a_{n-1})\in\R^{n-1}$ and denote the vertical line through the point $(a',0)$ with $\ell_{a'}:=(a',0)+\vec{\ell}_n$.

\paragraph{}\label{deg}We claim: \em ${\tt T}_\pol(\ell_{a'})=\ell_{a'}$ for each $a'\in\R^{n-1}$\em. To that end we prove: \em The polynomial ${\tt Q}_{a'}(\t):=\t-a_{n-1}{\ttL}^2(a',\t){\tt P}(a',\t)$ has odd degree for each $a'\in\R^{n-1}$\em. In addition \em ${\tt Q}_{a'}(\t)=\t$ if and only if either $a_{n-1}{\ttL}^2(a',\t)\equiv0$ or ${\ttf}_1(a')=0$\em.
\begin{proof}
If $\pol$ is placed in $\vec{\ell}_n$-bounded position and ${\ttf}_1(a')=0$, then by Lemma \ref{prop:polP} ${\ttf}_2(a')=0$, so ${\tt P}(a',\t)\equiv0$ and ${\tt Q}_{a'}(\t)=\t$ has odd degree. If $\pol$ is not placed in $\vec{\ell}_n$-bounded position, ${\tt P}(a',\t)$ is a monic polynomial of degree $1$. Therefore we may assume that ${\ttf}_1(a')\neq0$ and ${\tt P}(a',\t)$ is a polynomial of degree $1$. As $\pol$ is non-degenerate, we have that ${\ttL}^2(a',\t)$ is either identically zero or a polynomial of positive degree $2d>0$. Therefore, ${\tt Q}_{a'}(\t)$ is either a polynomial of degree $1$ (if $a_{n-1}{\ttL}^2(a',\t)\equiv0$) or of degree $2d+1>1$ (otherwise).
\end{proof}

Let us analyze the behavior of ${\tt T}_\pol$ over certain subsets of the line $\ell_{a'}$ attending to the position of the latter with respect to $\pol$.

\begin{lem}\label{prop:face}
Let $\pol\subset\R^n$ be an $n$-dimensional non-degenerate convex polyhedron and let $\p:=\pi_n(\pol)$. Let $\Gg,\Rr$ be sets such that $\Int(\pol)\subset\Gg\subset\pol$ and $\Rr\subset\partial\pol$. Given $a':=(a_1,\ldots,a_{n-1})\in\R^{n-1}$, we have:
\begin{itemize}
\item[(i)] If $a'\not\in\p$, then $\ell_{a'}\setminus\pol=\ell_{a'}$ and ${\tt T}_\pol(\ell_{a'}\setminus\pol)={\tt T}_\pol(\ell_{a'})=\ell_{a'}$.
\item[(ii)] If $a'\in\p$ and $a_{n-1}\leq0$, then ${\tt T}_\pol(\ell_{a'}\setminus\Gg)=\ell_{a'}\setminus\Gg$.
\item[(iii)] If $a'\in\Int(\p)$ and $a_{n-1}>0$, then ${\tt T}_\pol(\ell_{a'}\setminus\Rr)=\ell_{a'}$ and ${\tt T}_\pol(\ell_{a'}\setminus\Gg)=\ell_{a'}$ if $\ell_{a'}\cap\Gg$ is a bounded set.
\item[(iv)] If $a'\in\partial\p$ and $a_{n-1}>0$, then 
$$
{\tt T}_\pol(\ell_{a'}\setminus\Rr)=\begin{cases}
\ell_{a'}\setminus\Rr&\text{if ${\ttf}_1(a')=0$ or ${\ttL}(a',{\tt t})\equiv0$},\\
\ell_{a'}&\text{if ${\ttf}_1(a')\neq0$ and ${\ttL}(a',{\tt t})\not\equiv0$}.
\end{cases}
$$
In particular, ${\tt T}_\pol(\ell_{a'}\setminus\Rr)=\ell_{a'}\setminus\Rr$ if $\ell_{a'}\cap\Rr$ contains at least two points.
\end{itemize}
\end{lem}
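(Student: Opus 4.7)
The approach is to analyze the restriction of ${\tt T}_\pol$ to each vertical line $\ell_{a'}$, which by construction corresponds to the univariate polynomial ${\tt Q}_{a'}(t) := t - a_{n-1}{\ttL}^2(a',t){\tt P}(a',t)$. First I will record the universal facts: (a) ${\tt Q}_{a'}$ has odd degree by \ref{deg}, so it is surjective $\R \to \R$; (b) by the same paragraph \ref{deg}, ${\tt Q}_{a'}$ reduces to the identity precisely when $a_{n-1} = 0$, or ${\ttL}(a',\cdot) \equiv 0$, or ${\ttf}_1(a') = 0$ (because $\{{\ttf}_1 = 0\} \subset \{{\ttf}_2 = 0\}$ forces ${\tt P}(a',\cdot)\equiv 0$); (c) writing $\ell_{a'}\cap\pol = [\alpha,\beta]$ when $\pol$ is in $\vec{\ell}_n$-bounded position (or $(-\infty,\beta]$ otherwise), the finite endpoints $\alpha,\beta$ lie on some boundary hyperplane of $\pol$, so ${\ttL}$ vanishes there and they are fixed points of ${\tt Q}_{a'}$; (d) Lemmas \ref{prop:polP} and \ref{prop:polP2} yield ${\tt P}(a',t) < 0$ for $t < \alpha$ and ${\tt P}(a',t) > 0$ for $t > \beta$, since these rays sit in the basement $\Cc^-$ and the attic $\Cc^+$ respectively.

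Case (i) is then immediate: $a'\notin\p$ means $\ell_{a'}\cap\pol = \emptyset$, so $\ell_{a'}\setminus\pol = \ell_{a'}$ and surjectivity of ${\tt Q}_{a'}$ yields ${\tt T}_\pol(\ell_{a'}) = \ell_{a'}$. For Case (ii), the hypothesis $a_{n-1}\leq 0$ makes the displacement ${\tt Q}_{a'}(t) - t = -a_{n-1}{\ttL}^2(a',t){\tt P}(a',t)$ have the same sign as ${\tt P}(a',t)$, so ${\tt Q}_{a'}(t)\leq t$ on $(-\infty,\alpha]$ and ${\tt Q}_{a'}(t)\geq t$ on $[\beta,+\infty)$, while $\alpha,\beta$ remain fixed. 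Since $\Int(\pol)\subset\Gg\subset\pol$, this shows ${\tt T}_\pol(\ell_{a'}\setminus\Gg) \subset \ell_{a'}\setminus\Gg$. For the reverse inclusion, the identity subcase is trivial; otherwise ${\tt Q}_{a'}$ has positive leading coefficient (computed as $-a_{n-1}\cdot{\ttf}_1(a')$ times a positive constant coming from ${\ttL}^2$, using $-a_{n-1}>0$ and ${\ttf}_1(a')>0$ on $\Int(\p)$), so ${\tt Q}_{a'}(t)\to\pm\infty$ as $t\to\pm\infty$, and IVT yields ${\tt Q}_{a'}\bigl((-\infty,\alpha]\bigr) = (-\infty,\alpha]$ and ${\tt Q}_{a'}\bigl([\beta,+\infty)\bigr) = [\beta,+\infty)$.

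For Case (iii), $a_{n-1} > 0$ reverses the sign of the displacement: ${\tt Q}_{a'}(t) > t$ on $(-\infty,\alpha)$ and ${\tt Q}_{a'}(t) < t$ on $(\beta,+\infty)$. The leading coefficient of ${\tt Q}_{a'}$ is now negative (${\ttf}_1(a') > 0$ while $-a_{n-1} < 0$), so ${\tt Q}_{a'}(t)\to+\infty$ as $t\to-\infty$ and ${\tt Q}_{a'}(t)\to-\infty$ as $t\to+\infty$. Combined with the fixed points $\alpha,\beta$, IVT gives ${\tt Q}_{a'}\bigl((-\infty,\alpha]\bigr) \supset [\alpha,+\infty)$ and ${\tt Q}_{a'}\bigl([\beta,+\infty)\bigr) \supset (-\infty,\beta]$. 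Thus already the two outer rays $(-\infty,\alpha)\cup(\beta,+\infty)$, contained in both $\ell_{a'}\setminus\Rr$ and $\ell_{a'}\setminus\Gg$, map onto all of $\ell_{a'}$, proving both equalities (the boundedness of $\ell_{a'}\cap\Gg$ in the second assertion ensures that $\alpha,\beta$ are finite). Case (iv) splits as a dichotomy: if ${\ttf}_1(a') = 0$ or ${\ttL}(a',\cdot)\equiv 0$, then ${\tt Q}_{a'} = \id$ and ${\tt T}_\pol(\ell_{a'}\setminus\Rr) = \ell_{a'}\setminus\Rr$ trivially; otherwise the Case (iii) argument applies verbatim to give ${\tt T}_\pol(\ell_{a'}\setminus\Rr) = \ell_{a'}$. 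For the concluding \emph{in particular}, if $\ell_{a'}\cap\Rr$ has at least two points, then $\ell_{a'}\cap\partial\pol$ has at least two points, forcing $\ell_{a'}$ to be a vertical edge contained in some vertical facet, and hence in some vertical hyperplane $H_j$; then ${\tth}_j(a',\cdot)\equiv 0$ and so ${\ttL}(a',\cdot)\equiv 0$.

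The main technical subtlety is the surjectivity portion of Case (iii): one must pin down the correct sign of the leading coefficient of ${\tt Q}_{a'}$ in order to obtain the asymptotic behavior needed for IVT, and in particular to show that the two outer rays alone already cover $\ell_{a'}$ after the removal of a subset of $\partial\pol$. This is precisely where the strict positivity ${\ttf}_1(a') > 0$ on $\Int(\p)$ delivered by the rational separator of Lemma \ref{prop:polP} enters decisively; the preservation half of Case (ii), by contrast, uses only the qualitative sign information on ${\tt P}$ together with ${\ttL}$ vanishing on $\partial\pol$.
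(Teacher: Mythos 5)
Your strategy is the paper's: restrict ${\tt T}_\pol$ to the vertical line $\ell_{a'}$, i.e.\ study ${\tt Q}_{a'}(\t)=\t-a_{n-1}{\ttL}^2(a',\t){\tt P}(a',\t)$, read off the sign of the displacement from Lemmas \ref{prop:polP} and \ref{prop:polP2}, use that the endpoints of $\ell_{a'}\cap\pol$ are fixed points (because ${\ttL}$ vanishes there), and finish with degree/leading-coefficient asymptotics plus the intermediate value theorem. Cases (i) and (ii) are correct (in (ii) you do not even need to identify the leading coefficient: the inequality ${\tt Q}_{a'}(t)\geq t$ on the upper ray already forces ${\tt Q}_{a'}(t)\to+\infty$ there, and likewise below; this also sidesteps your appeal to ``${\ttf}_1(a')>0$ on $\Int(\p)$'' at a point $a'$ that may lie in $\partial\p$ — harmless here since ${\ttf}_1|_{\p}\geq0$ and the zero locus is absorbed by the identity subcase).

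There is, however, a genuine gap in (iii) and (iv), exactly where your ``two outer rays'' argument is doing the work. When $\alpha<\beta$ are both finite, the open rays do give ${\tt Q}_{a'}((-\infty,\alpha))\supset(\alpha,+\infty)$ and ${\tt Q}_{a'}((\beta,+\infty))\supset(-\infty,\beta)$, whose union is $\R$. But in (iii) the polyhedron need not be in $\vec{\ell}_n$-bounded position, so $\ell_{a'}\cap\pol$ can be a ray ${]{-\infty},\beta]}$: then only one outer ray exists and it covers only $(-\infty,\beta)$; adjoining the inner piece $(-\infty,\beta)\subset\Int(\pol)\subset\ell_{a'}\setminus\Rr$ covers $(\beta,+\infty)$ as well, but the value $\beta$ itself is not accounted for when $(a',\beta)\in\Rr$. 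The same problem occurs in (iv) when ${\ttf}_1(a')\neq0$ and ${\ttL}(a',\cdot)\not\equiv0$: there $\ell_{a'}\cap\pol$ is a single point $(a',t_0)$, so $\alpha=\beta=t_0$ and the two open outer rays cover only $\R\setminus\{t_0\}$. Nothing in your argument excludes that $t_0$ is the \emph{unique} preimage of $t_0$ under ${\tt Q}_{a'}$ (an odd-degree polynomial with negative leading coefficient and a fixed point at $t_0$ can perfectly well take the value $t_0$ nowhere else), in which case removing $(a',t_0)$ would remove $t_0$ from the image. The paper closes this with the local computation $\frac{\partial{\tt Q}_{a'}}{\partial\t}(t_0)=1$ (valid because ${\ttL}(a',t_0)=0$), whence ${\tt Q}_{a'}(t_0-\veps)<t_0<{\tt Q}_{a'}(t_0+\veps)$ for small $\veps>0$; combined with $\lim_{t\to\pm\infty}{\tt Q}_{a'}(t)=\mp\infty$ this yields $t_0\in{\tt Q}_{a'}({]{-\infty},t_0-\veps]})\cap{\tt Q}_{a'}([t_0+\veps,+\infty[)$. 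You need this transversality step (or an equivalent) to finish the singleton subcases of (iii) and (iv); the rest of your proof, including the ``in particular'' clause of (iv), is sound.
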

\begin{proof}
(i) As $\ell_{a'}\setminus\pol=\ell_{a'}$, if follows from \ref{deg} that ${\tt T}_\pol(\ell_{a'}\setminus\pol)={\tt T}_\pol(\ell_{a'})=\ell_{a'}$.

We prove next statements (ii), (iii) and (iv). For all of them $a'\in\p$. Then the set $\pol\cap\ell_{a'}$ is either a segment (which can reduce to a point) or a ray. The end point(s) of $\pol\cap\ell_{a'}$ belong(s) to $\bigcup_{i=1}^d\{{\tth}_i=0\}$. If $\pol\cap\ell_{a'}$ is a segment, we denote the end points of $\pol\cap\ell_{a'}$ with $p^-:=(a',t^-)$ and $p^+:=(a',t^+)$, where $t^-\le t^+$. If $\pol\cap\ell_{a'}$ is a ray, we denote its end point with $p^+:=(a',t^+)$. To avoid unnecessary repetitions, we abuse notation and set $\Cc^-=\varnothing$ and $p^-:=(a',t^-)$ where $t^-:={-\infty}$ if $\pol\cap\ell_{a'}$ is a ray. Consequently,
$$
\ell_{a'}=(\ell_{a'}\cap\Cc^-)\cup\{p^-\}
\cup(\ell_{a'}\cap\Int(\pol))
\cup\{p^+\}\cup(\ell_{a'}\cap\Cc^+).
$$

\begin{center}
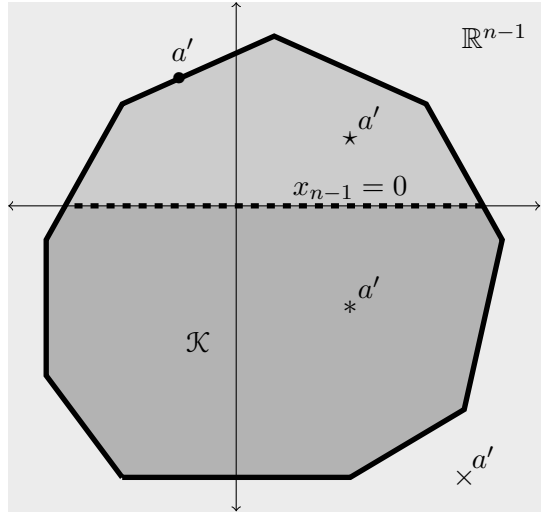

\begin{tikzpicture}[xscale=1, yscale=0.9]

\draw[fill=gray!15,draw=none] (-0.5,0) to (6.5,0) to (6.5,7.5) to (-0.5,7.5) to (-0.5,0);

\draw[fill=gray!60,draw=none] (1,0.5) to (4,0.5) to (5.5,1.5) to (6,4) to (5.75,4.5) to (0.25,4.5) to (0,4) to (0,2) to (1,0.5);
\draw[fill=gray!40,draw=none] (5.75,4.5) to (5,6) to (3,7) to (1,6) to (0.25,4.5);

\draw[line width=2pt] (1,0.5) to (4,0.5) to (5.5,1.5) to (6,4) to (5,6) to (3,7) to (1,6) to (0,4) to (0,2) to (1,0.5);

\draw[line width=2pt,dashed] (5.75,4.5) to (0.25,4.5); 
\draw[<->] (-0.5,4.5) -- (6.5,4.5);
\draw[<->] (2.5,0) -- (2.5,7.5);

\draw (5.5,0.5) node{$\times$};
\draw (4,3) node{$\ast$};
\draw (4,5.5) node{$\star$};
\draw (1.75,6.375) node{$\bullet$};

\draw (5.75,0.8) node{$a'$};
\draw (4.25,3.3) node{$a'$};
\draw (4.25,5.8) node{$a'$};
\draw (1.8,6.8) node{$a'$};
\draw (2,2.5) node{$\pol$};
\draw (5.9,7) node{$\R^{n-1}$};
\draw (4,4.75) node{$x_{n-1}=0$};

\end{tikzpicture}
\captionof{figure}{Analyzed positions (i)--(iv) for $a'\in\R^{n-1}$}
\end{center}
We can write the differences $\ell_{a'}\setminus\Gg$ and $\ell_{a'}\setminus\Rr$ as follows:
\begin{align*}
&\ell_{a'}\setminus\Gg=(\ell_{a'}\cap\Cc^-)\cup(\ell_{a'}\cap\Cc^+)\cup{\mathfrak F}_{\Gg}\\
&\ell_{a'}\setminus\Rr=(\ell_{a'}\cap\Cc^-)\cup(\ell_{a'}\cap\Int(\pol))\cup(\ell_{a'}\cap\Cc^+)\cup{\mathfrak F}_{\Rr}
\end{align*} 
for some ${\mathfrak F}_{\Gg},\Ff_\Rr\subset\{p^-,p^+\}$. Observe that
$$
\ell_{a'}\cap\Cc^-=\{(a,t):\ t<t^-\}\quad\text{and}\quad\ell_{a'}\cap\Cc^+:=\{(a,t):\ t>t^+\}.
$$
By Lemma \ref{prop:polP} we have ${{\tt P}|_{\Cc^-}<0}$ and ${\tt P}|_{\Cc^+}>0$. Also the equality ${\ttL}(p^+)=0$ holds, so ${\tt T}_\pol(p^+)=p^+$. Analogously, if $t^->{-\infty}$ then ${\tt T}_\pol(p^-)=p^-$.

(ii) If $a_{n-1}=0$, then ${\tt Q}_{a'}(\t)=\t$, so ${\tt T}_\pol(\ell_{a'}\setminus\Gg)=\ell_{a'}\setminus\Gg$. Assume next that $a_{n-1}<0$. If $t>t^+$, then ${\tt P}(a',t)>0$, so ${\tt Q}_{a'}(t)\geq t>t^+$ and ${\tt T}_\pol(\ell_{a'}\cap\Cc^+)=\ell_{a'}\cap\Cc^+$ (because ${\tt T}_\pol(p^+)=p^+$). Similarly, if $\ell_{a'}\cap\Cc^-\neq\varnothing$ and $t<t^-$ then ${\tt P}(a',t)<0$, so ${\tt Q}_{a'}(t)\leq t<t^-$ and ${\tt T}_\pol(\ell_{a'}\cap\Cc^-)=\ell_{a'}\cap\Cc^-$ (because ${\tt T}_\pol(p^-)=p^-$). Consequently, ${\tt T}_\pol(\ell_{a'}\setminus\Gg)=\ell_{a'}\setminus\Gg$.

(iii) By Lemma \ref{prop:polP} we have ${\ttf}_1(a')>0$, so the univariate polynomial ${\tt P}(a',\t)={\ttf}_1(a')\t-{\ttf}_2(a')$ has degree one and its leading coefficient is positive. As $a'\in\Int(\p)$, the polynomial ${\ttL}^2(a,\t)$ has even positive degree in $\t$ and positive leading coefficient. Consequently, ${\tt Q}_{a'}(\t)$ is a univariate polynomial of odd degree whose leading coefficient is negative, so $\lim_{t\to\pm\infty}{\tt Q}_{a'}(t)=\mp\infty$. 

If $\ell_{a'}\cap\Gg$ is a bounded set, then $p^-\in\partial\pol$. As ${\tt Q}_{a'}(t^-)=t^-$, ${\tt Q}_{a'}(t^+)=t^+$ and $t^-< t^+$ (because $a'\in\Int(\p)$), we deduce
$$
\R={]{-\infty},t^+[}\cup{]t^-,+\infty[}\subset{\tt Q}_{a'}({]t^+,+\infty[})\cup{\tt Q}_{a'}({]{-\infty},t^-[})\subset\R.
$$
Consequently, ${\tt T}_\pol(\ell_{a'}\setminus\Gg)={\tt T}_\pol(\ell_{a'}\setminus\pol)=\ell_{a'}$.

As $a'\in\Int(\p)$, we have that $\ell_{a'}\cap\Rr\subset\ell_{a'}\cap\partial\pol$ consists of at most two points. 

\paragraph{}
If $\ell_{a'}\cap\Rr$ contains two different points, then $\ell_{a'}\cap\Gg$ is a bounded set and ${\tt T}_\pol(\ell_{a'}\setminus\Rr)={\tt T}_\pol(\ell_{a'}\setminus\pol)=\ell_{a'}$, as we have seen above. 

\paragraph{}\label{sing}
If $\ell_{a'}\cap\Rr$ is a singleton $\{(a', t_0)\}$, we observe that ${\ttL}(a',t_0)=0$ and
$$
\frac{\partial}{\partial \t}{\tt Q}_{a'}(\t)=1-a_{n-1}{\ttL}(a',\t)\Big(2\frac{\partial{\ttL}}{\partial\x_n}(a',\t){\tt P}(a',\t)+{\ttL}(a',\t)\frac{\partial{\tt P}}{\partial\x_n}(a',\t)\Big)\quad\leadsto\quad\frac{\partial}{\partial \t}{\tt Q}_{a'}(t_0)=1.
$$
Thus, there exists $\veps>0$ such that ${\tt Q}_{a'}(t_0-\veps)=:s_1<t_0<s_2:={\tt Q}_{a'}(t_0+\veps)$. As $\lim_{t\to\pm\infty}{\tt Q}_{a'}(t)=\mp\infty$, we have
$$
\R={]{-\infty},s_2[}\cup{]s_1,+\infty[}\subset{\tt Q}_{a'}({]t_0-\veps,+\infty[})\cup{\tt Q}_{a'}({]{-\infty},t_0+\veps[})\subset\R.
$$
Consequently, ${\tt T}_\pol(\ell_{a'}\setminus\Rr)=\ell_{a'}$. 

(iv) By \ref{lem:basic} we have $\ell_{a'}\cap\pol\subset\partial\pol$. If ${\ttf}_1(a')=0$, then by Lemma \ref{prop:polP} also ${\ttf}_2(a')=0$ and ${\tt Q}_{a'}(\t)=\t$, so ${\tt T}_\pol(\ell_{a'}\setminus\Rr)=\ell_{a'}\setminus\Rr$. Analogously, if ${\ttL}(a',\t)\equiv0$, then ${\tt Q}_{a'}(\t)=\t$, so ${\tt T}_\pol(\ell_{a'}\setminus\Rr)=\ell_{a'}\setminus\Rr$.

If ${\ttf}_1(a')\neq0$ and ${\ttL}(a',\t)\not\equiv0$, then $\ell_{a'}\cap\pol$ reduces to a point and $\Rr\cap\ell_{a'}$ can be either empty or a singleton. If $\Rr\cap\ell_{a'}=\varnothing$ it is clear by \ref{deg} that ${\tt T}_\pol(\ell_{a'}\setminus\Rr)=\ell_{a'}$. If $\Rr$ is a singleton, it follows by \ref{sing} that ${\tt T}_\pol(\ell_{a'}\setminus\Rr)=\ell_{a'}$.

Next, if $\Rr\cap\ell_{a'}$ contains two different points, then by \ref{lem:basic} the segment connecting these two points is contained in $\ell_{a'}\cap\pol\subset\partial\pol$. Thus, $\Rr$ is contained in a vertical facet of $\pol$. Consequently, ${\ttL}(a',\t)\equiv0$, so ${\tt Q}_{a'}(\t)=\t$ and ${\tt T}_\pol(\ell_{a'}\setminus\Rr)=\ell_{a'}\setminus\Rr$.
\end{proof}

We present next some technical consequences of the previous result, which are useful for the proof of Theorem \ref{main2}. We need to understand how are the images under ${\tt T}_{\pol}$ of certain sets $\Gg$ satisfying $\Int(\pol)\subset\Gg\subset\pol$. From now on we denote the hyperplane $\{\x_{n-1}=0\}$ with $\Pi_{n-1}$ and the half-spaces $\{\x_{n-1}\le 0\}$ and $\{\x_{n-1}\ge 0\}$ with $\Pi_{n-1}^-$ and $\Pi_{n-1}^+$ respectively.

\begin{cor}\label{cor:convex1}
Let $\pol\subset\R^n$ be an $n$-dimensional non-degenerate convex polyhedron placed in $\vec{\ell}_n$-bounded position and let $\p:=\pi_n(\pol)$. Let $\Gg$ be a set such that $\Int(\pol)\subset\Gg\subset\pol$. Let us set $\p:=\pi_n(\pol)$ and $\Gg^{-}:=\Gg\cap\Pi_{n-1}^-$. Then 
\begin{equation}\label{statement}
(\R^n\setminus\Gg^{-})\setminus(\Gg\cap\pi_n^{-1}(\partial\p)\cap\Int(\Pi_{n-1}^+))\subset{\tt T}_\pol(\R^n\setminus\Gg)\subset\R^n\setminus\Gg^{-}
\end{equation}
and ${\tt T}_\pol(\R^n\setminus\Gg^{-})=\R^n\setminus\Gg^{-}$.
\end{cor}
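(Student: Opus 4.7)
The map ${\tt T}_\pol$ only alters the last coordinate, so it maps each vertical line $\ell_{a'}:=(a',0)+\vec{\ell}_n$ into itself and the whole set-theoretic problem decomposes as
\[
{\tt T}_\pol(\R^n\setminus\Gg)=\bigsqcup_{a'\in\R^{n-1}}{\tt T}_\pol(\ell_{a'}\setminus\Gg).
\]
Thus the plan is to compute ${\tt T}_\pol(\ell_{a'}\setminus\Gg)$ case by case via Lemma~\ref{prop:face}, and then compare the resulting union with the right hand sides of \eqref{statement} (also restricted line by line).

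I would distribute the lines into the following four groups according to the position of $a'=(a_1,\dots,a_{n-1})$ and the sign of $a_{n-1}$: (I) $a'\notin\p$; (II) $a'\in\p$ and $a_{n-1}\le 0$; (III) $a'\in\Int(\p)$ and $a_{n-1}>0$; (IV) $a'\in\partial\p$ and $a_{n-1}>0$. In case (I), $\ell_{a'}\cap\Gg=\varnothing$ and Lemma~\ref{prop:face}(i) gives ${\tt T}_\pol(\ell_{a'}\setminus\Gg)=\ell_{a'}$. In case (II), since $\ell_{a'}\subset\Pi_{n-1}^-$, we have $\ell_{a'}\cap\Gg=\ell_{a'}\cap\Gg^-$, and Lemma~\ref{prop:face}(ii) yields ${\tt T}_\pol(\ell_{a'}\setminus\Gg)=\ell_{a'}\setminus\Gg^-$. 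In case (III), since $\pol$ is in $\vec{\ell}_n$-bounded position, $\ell_{a'}\cap\Gg\subset\ell_{a'}\cap\pol$ is bounded, so Lemma~\ref{prop:face}(iii) gives ${\tt T}_\pol(\ell_{a'}\setminus\Gg)=\ell_{a'}$. Case (IV) is the delicate one: because $a'\in\partial\p$ we have $\ell_{a'}\cap\pol\subset\partial\pol$ by \ref{lem:basic}, and in particular $\Rr:=\ell_{a'}\cap\Gg\subset\partial\pol$, so Lemma~\ref{prop:face}(iv) applies and gives either $\ell_{a'}\setminus\Rr$ (when ${\ttf}_1(a')=0$ or ${\ttL}(a',\t)\equiv 0$) or the full line $\ell_{a'}$.

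With these four computations in hand, both inclusions of \eqref{statement} follow by restriction to each line. The upper inclusion ${\tt T}_\pol(\R^n\setminus\Gg)\subset\R^n\setminus\Gg^-$ is immediate: on lines of type (II) the image misses $\Gg\cap\ell_{a'}=\Gg^-\cap\ell_{a'}$, and on the other three types $\Gg^-\cap\ell_{a'}=\varnothing$ (because either $a'\notin\p$ or $a_{n-1}>0$). For the lower inclusion, observe that the set being subtracted from $\R^n\setminus\Gg^-$ intersects only lines of type (III) and (IV), and on such lines it equals exactly $\Gg\cap\ell_{a'}$; comparing with the case-by-case images, equality holds on types (I), (II), (III) and on the ``good'' branch of (IV), while on the ``bad'' branch of (IV) (where ${\ttf}_1(a')=0$ or ${\ttL}(a',\t)\equiv 0$) the image is precisely $\ell_{a'}\setminus(\Gg\cap\ell_{a'})$, which is also what the lower right hand side prescribes. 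This is the step I expect to require the most care, since one must check that the exceptional ``bad'' branch is precisely the reason we must subtract $\Gg\cap\pi_n^{-1}(\partial\p)\cap\Int(\Pi_{n-1}^+)$.

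Finally, for the equality ${\tt T}_\pol(\R^n\setminus\Gg^-)=\R^n\setminus\Gg^-$, I would run exactly the same line-by-line analysis with $\Gg$ replaced by $\Gg^-$. The point is that $\Gg^-\cap\ell_{a'}=\varnothing$ whenever $a'\notin\p$ or $a_{n-1}>0$, so types (I), (III), (IV) contribute ${\tt T}_\pol(\ell_{a'})=\ell_{a'}$ (using Lemma~\ref{prop:face}(i), (iii) with $\Rr=\varnothing$, and (iv) with $\Rr=\varnothing$, all of which produce $\ell_{a'}$), while type (II) contributes $\ell_{a'}\setminus\Gg^-$ by Lemma~\ref{prop:face}(ii) applied to the set $\Gg^-$ (which still satisfies $\Int(\pol)\cap\Pi_{n-1}^-\subset\Gg^-\subset\pol\cap\Pi_{n-1}^-$, enough for that part of the lemma, which only required $\Gg\subset\pol$ on the relevant half-space). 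Taking the disjoint union over $a'$ reproduces $\R^n\setminus\Gg^-$, concluding the proof.
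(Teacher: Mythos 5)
Your proof is correct and follows essentially the same route as the paper: decompose $\R^n$ into vertical lines, compute ${\tt T}_\pol(\ell_{a'}\setminus\Gg)$ case by case via Lemma~\ref{prop:face}, and reassemble, with your appeal to the explicit dichotomy of Lemma~\ref{prop:face}(iv) being an inessential variant of the paper's sandwich bound $\ell_{a'}\setminus\Gg\subset{\tt T}_\pol(\ell_{a'}\setminus\Gg)\subset\ell_{a'}$ in the singleton case. One harmless slip: the subtracted set $\Gg\cap\pi_n^{-1}(\partial\p)\cap\Int(\Pi_{n-1}^+)$ meets only lines of type (IV), not (III) --- on type (III) lines it is empty --- but since the image there is the whole line $\ell_{a'}$, the required inclusion still holds.
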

\begin{proof}
Let $a'\in\R^{n-1}$ and let $\ell_{a'}$ be the vertical line through $(a',0)$. If $a_{n-1}\le 0$, we have by Lemma \ref{prop:face} (i) and (ii)
$$
{\tt T}_\pol(\ell_{a'}\setminus\Gg)=\ell_{a'}\setminus\Gg=\ell_{a'}\setminus\Gg^{-}.
$$
Consequently
\begin{equation}\label{parte1}
{\tt T}_\pol((\R^n\setminus\Gg)\cap\Pi_{n-1}^-)=\bigcup_{\substack{a'\in\R^{n-1}\\a_{n-1}\leq0}}{\tt T}_\pol(\ell_{a'}\setminus\Gg)=\bigcup_{\substack{a'\in\R^{n-1}\\a_{n-1}\leq0}}\ell_{a'}\setminus\Gg^{-}=(\R^n\setminus\Gg^{-})\cap\Pi_{n-1}^-.
\end{equation}
If $a_{n-1}>0$, we have by \ref{deg} and Lemma \ref{prop:face},
$$
{\tt T}_\pol(\ell_{a'}\setminus\Gg)=\begin{cases}
\ell_{a'}&\text{if $\ell\cap\Gg=\varnothing$,}\\
\ell_{a'}&\text{if $a'\in\Int\p$ (because $\ell_{a'}\cap\pol$ is bounded),}\\
\ell_{a'}\setminus\Gg&\text{if $a'\in\partial\p$ and $\ell\cap\Gg$ contains at least two points.}
\end{cases}
$$
In case $a'\in\partial\p$ and $\ell\cap\Gg$ is a singleton we have $\ell_{a'}\setminus\Gg\subset{\tt T}_\pol(\ell_{a'}\setminus\Gg)\subset\ell_{a'}$. Thus, 
\begin{equation}\label{cases}
\begin{cases}
{\tt T}_\pol(\ell_{a'}\setminus\Gg)=\ell_{a'}&\text{if $a'\not\in\partial\p,\ a_{n-1}>0$,}\\
\ell_{a'}\setminus\Gg\subset{\tt T}_\pol(\ell_{a'}\setminus\Gg)\subset\ell_{a'}&\text{if $a'\in\partial\p,\ a_{n-1}>0$.}
\end{cases}
\end{equation}
Using the following equality
$$
{\tt T}_\pol((\R^n\setminus\Gg)\cap\Int(\Pi_{n-1}^+))=\bigcup_{\substack{a'\in\R^{n-1}\\a_{n-1}>0}}{\tt T}_\pol(\ell_{a'}\setminus\Gg)
$$
and \eqref{cases} we conclude
\begin{align}\label{parte2}
{\tt T}_\pol((\R^n\setminus\Gg)\cap\Int(\Pi_{n-1}^+))&\subset\bigcup_{\substack{a'\in\R^{n-1}\\a_{n-1}>0}}\ell_{a'}=\Int(\Pi_{n-1}^+)=(\R^n\setminus\Gg^{-})\cap\Int(\Pi_{n-1}^+)\nonumber\\
{\tt T}_\pol((\R^n\setminus\Gg)\cap\Int(\Pi_{n-1}^+))&\supset\bigcup_{\substack{a'\in\R^{n-1}\\a_{n-1}>0,a'\not\in\partial\p}}\ell_{a'}\cup\bigcup_{\substack{a'\in\R^{n-1}\\a_{n-1}>0,a'\in\partial\p}}\ell_{a'}\setminus\Gg\nonumber\\
&=\Int(\Pi_{n-1}^+)\setminus(\Gg\cap\pi_n^{-1}(\partial\p))\\
&=((\R^n\setminus\Gg^{-})\cap\Int(\Pi_{n-1}^+))\setminus(\Gg\cap\pi_n^{-1}(\partial\p)\cap\Int(\Pi_{n-1}^+)).\nonumber
\end{align}
By \eqref{parte1} and \eqref{parte2} it follows \eqref{statement}, as required. The last equality in the statement follows from equation \eqref{parte1} and the fact that ${\tt T}_\pol(\ell_{a'})=\ell_{a'}$ for each $a'\in\R^{n-1}$.
\end {proof}

\begin{lem}\label{lem:face}
Let $\Ee$ be a face of an $n$-dimensional convex polyhedron $\pol\subset\R^n$ and let $p,q\in\Int(\Ee)$. Let $\ell_p$ be a line through $p$ that meets $\Int(\pol)$ and let $\ell_q$ be a line through $q$ and parallel to $\ell_p$. Then $\ell_q$ also meets $\Int(\pol)$.
\end{lem}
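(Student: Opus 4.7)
The plan is to work directly with the minimal presentation $\pol=\bigcap_{i=1}^m H_i^+$, where $H_i=\{{\tth}_i=0\}$ with $\pol\subset\{{\tth}_i\ge 0\}$. Let $I\subset\{1,\dots,m\}$ be the set of indices $i$ such that $\Ee\subset H_i$; then $\Ee=\pol\cap\bigcap_{i\in I}H_i$ and the standard description of the relative interior of a face gives
$$
\Int(\Ee)=\{x\in\Ee:\ {\tth}_i(x)>0\text{ for all }i\notin I\}.
$$
In particular, since $p,q\in\Int(\Ee)$, we have ${\tth}_i(p)={\tth}_i(q)=0$ for every $i\in I$ and ${\tth}_i(p),{\tth}_i(q)>0$ for every $i\notin I$.

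Next, let $\vec{v}$ denote the common direction of $\ell_p$ and $\ell_q$, and choose $t_0\in\R$ with $p+t_0\vec{v}\in\Int(\pol)$. Because each ${\tth}_i$ is affine, the quantity $\lambda_i:={\tth}_i(x+\vec{v})-{\tth}_i(x)$ is a constant independent of $x$, so
$$
{\tth}_i(x+t\vec{v})={\tth}_i(x)+t\lambda_i\qquad\forall x\in\R^n,\ t\in\R.
$$
Applying this at $x=p$ and using ${\tth}_i(p+t_0\vec{v})>0$ for all $i$, we obtain in particular that $t_0\lambda_i>0$ for every $i\in I$ (since ${\tth}_i(p)=0$). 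After replacing $\vec{v}$ by $-\vec{v}$ if necessary, we may assume $t_0>0$; then $\lambda_i>0$ for every $i\in I$.

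Finally, I look for $t>0$ small such that $q+t\vec{v}\in\Int(\pol)$. For $i\in I$ the computation gives ${\tth}_i(q+t\vec{v})=t\lambda_i>0$ for every $t>0$, independently of $q$. For $i\notin I$ with $\lambda_i\ge 0$, we have ${\tth}_i(q+t\vec{v})\ge{\tth}_i(q)>0$ for every $t\ge 0$. For the remaining finitely many indices $i\notin I$ with $\lambda_i<0$, the inequality ${\tth}_i(q+t\vec{v})={\tth}_i(q)+t\lambda_i>0$ holds provided $0<t<{\tth}_i(q)/|\lambda_i|$. Since the admissible set of $t>0$ is a non-empty intersection of finitely many open intervals of the form $(0,\delta_i)$, any sufficiently small $t>0$ yields ${\tth}_i(q+t\vec{v})>0$ for all $i$, so $q+t\vec{v}\in\Int(\pol)$ and $\ell_q$ meets $\Int(\pol)$.

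The only step that is not completely routine is the identification of $\Int(\Ee)$ with the strict-inequality locus; this is classical (see \cite[II.Thm.6.5]{r}) but is the piece of information that makes the directional-derivative bookkeeping go through simultaneously at $p$ and at $q$.
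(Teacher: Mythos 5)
Your argument is correct and is essentially identical to the paper's proof: both use the description of $\Int(\Ee)$ as the locus where the defining equations of $\Ee$ vanish and the remaining ones are strictly positive, deduce that the linear part $\vec{\tth}_i(\vec{v})$ is positive for the indices defining $\Ee$, and then take $q+t\vec{v}$ for small $t>0$. No gaps.
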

\begin{proof}
Write $\pol=\bigcap_{i=1}^r\{{\tth}_i\geq0\}$ where each ${\tth}_i$ is a linear equation and $\ell_p:=\{p+t\vec{v}:\ t\in\R\}$. We may assume $\Int(\Ee)=\bigcap_{i=1}^k\{{\tth}_i=0\}\cap\bigcap_{i=k+1}^r\{{\tth}_i>0\}$, so ${\tth}_i(p)=0$ for $i=1,\ldots,k$ and ${\tth}_i(p)>0$ for $i=k+1,\ldots,r$. As $\ell_p\cap\Int(\pol)\neq\varnothing$, we may assume (changing $\vec{v}$ by $-\vec{v}$ if necessary) that there exists $t>0$ such that ${\tth}_i(p+t\vec{v})>0$ for $i=1,\ldots,r$. Consequently, $\vec{\tth}_i(\vec{v})>0$ for $i=1,\ldots,k$. As ${\tth}_i(q)=0$ for $i=1,\ldots,k$ and ${\tth}_i(q)>0$ for $i=k+1,\ldots,r$, we have ${\tth}_i(q+t'\vec{v})>0$ for $i=1,\ldots,r$ and $t'>0$ small enough. Thus, $\ell_q\cap\Int(\pol)\neq\varnothing$, as required.
\end{proof}

\begin{cor}\label{cor:convex2}
Let $\pol\subset\R^n$ be an $n$-dimensional non-degenerate convex polyhedron and let $\Ee$ be a face of $\pol$ that is non-parallel to $\Pi_{n-1}$ and meets the open half-space $\Int(\Pi_{n-1}^+)$. Let $\Rr_0\subset\partial\pol$ be such that $\Int(\Ee)\cap\Rr_0=\varnothing$ and let $\Gg$ be a set such that $\Int(\pol)\cup\Int(\Ee)\cup\Rr_0\subset\Gg\subset\pol$. Denote $\Gg^{-}:=\Gg\cap\Pi_{n-1}^-$ and $\p:=\pi_n(\pol)$. Then, after an affine change of coordinates that keeps the hyperplane $\Pi_{n-1}$ invariant (and only depends on $\Ee$), we have
\begin{equation}\label{statement2}
(\R^n\setminus\Gg^{-})\setminus(\Rr_0\cap\pi_n^{-1}(\partial\p))\subset{\tt T}_\pol((\R^n\setminus\Gg^{-})\setminus(\Rr_0\cup\Int(\Ee)))\subset\R^n\setminus\Gg^{-}
\end{equation}
and ${\tt T}_\pol(\R^n\setminus\Gg^{-})=\R^n\setminus\Gg^{-}$.
\end{cor}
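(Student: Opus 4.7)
The proof adapts the vertical-line analysis used for Corollary \ref{cor:convex1}, with the new ingredient being the choice of an affine change of coordinates (preserving $\Pi_{n-1}$ setwise) such that, after the change, $\pi_n(\Int(\Ee)) \subset \Int(\p)$. Once this is arranged, no vertical line over $\partial\p$ meets $\Int(\Ee)$, so excluding $\Int(\Ee)$ from the source creates no loss in the image, and the only possible loss comes from $\Rr_0 \cap \pi_n^{-1}(\partial\p)$, as stated.

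\emph{Construction of the coordinate change.} Since $\Ee$ is non-parallel to $\Pi_{n-1}$, $\dim\Ee \geq 1$ and $\Int(\Ee) \neq \varnothing$. For any $r \in \Int(\Ee)$ and $q \in \Ee \cap \Int(\Pi_{n-1}^+)$, convexity of $\Ee$ places the open segment from $r$ to $q$ in $\Int(\Ee)$, and its points close to $q$ lie in $\Int(\Pi_{n-1}^+)$; hence $\Int(\Ee) \cap \Int(\Pi_{n-1}^+) \neq \varnothing$. Pick such a point $p$ and consider the open inward cone $C_p := \{\vec{w} \in \R^n : p + t\vec{w} \in \Int(\pol) \text{ for some small } t > 0\}$. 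I claim that $(C_p \cup (-C_p)) \cap \{w_{n-1}=0\} \neq \varnothing$. Otherwise, every $\vec{w} \in C_p$ has $w_{n-1}$ of a fixed sign, so $\{x_{n-1}=p_{n-1}\}$ is a supporting hyperplane of $\pol$ at $p$; then $\pol \cap \{x_{n-1}=p_{n-1}\}$ is a face of $\pol$ containing $p$, forcing $\Ee \subset \{x_{n-1}=p_{n-1}\}$ (as $\Ee$ is the smallest face whose relative interior contains $p$), contradicting the non-parallelism. Choose $\vec{v}$ with $v_{n-1}=0$ such that $\vec{v}$ or $-\vec{v}$ lies in $C_p$; then $p+\R\vec{v}$ meets $\Int(\pol)$. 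Let $M$ be any invertible linear endomorphism of $\R^n$ whose $(n-1)$-th row equals $(0,\dots,0,1,0,\dots,0)$ (so $M$ preserves $\Pi_{n-1}$ setwise) and satisfying $M^{-1}\vec{e}_n = \vec{v}$; such $M$ exists because $v_{n-1}=0$, and its construction depends only on $\Ee$.

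\emph{Projection property and line-by-line analysis.} After applying $M$ and renaming, vertical lines in the new coordinates are translates of $\R\vec{v}$ in the old coordinates. By construction the new vertical line through $p$ meets $\Int(\pol)$, so by Lemma \ref{lem:face} every new vertical line through a point of $\Int(\Ee)$ meets $\Int(\pol)$, yielding $\pi_n(\Int(\Ee)) \subset \pi_n(\Int(\pol)) = \Int(\p)$. Fix $a' \in \R^{n-1}$ and consider $\ell_{a'}$. If $a_{n-1} \leq 0$, then $\ell_{a'} \subset \Pi_{n-1}^-$ gives $(\Rr_0 \cup \Int(\Ee)) \cap \ell_{a'} \subset \Gg^- \cap \ell_{a'}$ and the source on $\ell_{a'}$ reduces to $\ell_{a'} \setminus \Gg$; Lemma \ref{prop:face}(i)--(ii) then yields ${\tt T}_\pol(\ell_{a'}\setminus\Gg) = \ell_{a'}\setminus\Gg = \ell_{a'}\setminus\Gg^-$. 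If $a_{n-1} > 0$ and $a' \notin \p$, Lemma \ref{prop:face}(i) gives ${\tt T}_\pol(\ell_{a'}) = \ell_{a'}$. If $a_{n-1} > 0$ and $a' \in \Int(\p)$, Lemma \ref{prop:face}(iii) applied to $\Rr := \Rr_0 \cup \Int(\Ee) \subset \partial\pol$ yields ${\tt T}_\pol(\ell_{a'} \setminus (\Rr_0 \cup \Int(\Ee))) = \ell_{a'}$. Finally, if $a_{n-1} > 0$ and $a' \in \partial\p$, the projection property gives $\Int(\Ee) \cap \ell_{a'} = \varnothing$, so the source on $\ell_{a'}$ reduces to $\ell_{a'} \setminus \Rr_0$, and Lemma \ref{prop:face}(iv) yields image either $\ell_{a'} \setminus \Rr_0$ or $\ell_{a'}$, with any loss contained in $\Rr_0 \cap \pi_n^{-1}(\partial\p)$. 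Assembling the four cases gives \eqref{statement2}, and the identity ${\tt T}_\pol(\R^n \setminus \Gg^-) = \R^n \setminus \Gg^-$ follows from the same case split together with ${\tt T}_\pol(\ell_{a'}) = \ell_{a'}$ from \ref{deg}.

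The main obstacle is the cone-intersection argument producing the direction $\vec{v}$; once $\pi_n(\Int(\Ee)) \subset \Int(\p)$ is secured, the remaining line-by-line analysis is a routine adaptation of the proof of Corollary \ref{cor:convex1}.
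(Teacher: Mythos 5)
Your argument follows the paper's proof almost exactly: both reduce the corollary to arranging, by a $\Pi_{n-1}$-preserving affine change of coordinates, that some vertical line through a point $p\in\Int(\Ee)$ meets $\Int(\pol)$ — whence by Lemma \ref{lem:face} every vertical line through $\Int(\Ee)$ does, giving $\pi_n(\Int(\Ee))\subset\Int(\p)$ — and then repeat the line-by-line case analysis of Corollary \ref{cor:convex1} via Lemma \ref{prop:face}; your inward-cone argument producing the direction $\vec v$ with $v_{n-1}=0$ is a reformulation of the paper's observation that the hyperplane through $p$ parallel to $\Pi_{n-1}$ must meet $\Int(\pol)$. The one point you omit is that the new coordinates must also satisfy $\vec e_n\notin\conv{\pol}{}$: this is a standing hypothesis of \ref{ss:tools}, without which the polynomial ${\tt P}$ (hence ${\tt T}_\pol$) is not defined and Lemma \ref{prop:face} does not apply. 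The fix is immediate and is the one the paper uses: since $\pol$ is non-degenerate, at least one of $\pm\vec v$ lies outside the recession cone, and you may take that one as the new $\vec e_n$, because the condition that the line $p+\R\vec v$ meets $\Int(\pol)$ is symmetric in $\pm\vec v$.
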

\begin{proof}
Take a point $p\in\Int(\Ee)$ and let $\Pi_{n-1,p}$ be the hyperplane through $p$ parallel to $\Pi_{n-1}$. As $\Ee$ is not parallel to $\Pi_{n-1}$, the intersection $\Pi_{n-1,p}\cap\Int(\pol)\neq\varnothing$. Otherwise, as $p\in\pol\cap\Pi_{n-1,p}\subset\partial\pol$, we have that $\Pi_{n-1,p}$ is a supporting hyperplane for $\pol$. As $p\in\Int(\Ee)$, we have $\Int(\Ee)\subset\Pi_{n-1,p}$, so $\Ee$ is parallel to $\Pi_{n-1}$, which is a contradiction. 

Let $q\in\Int(\pol)\cap\Pi_{n-1,p}$. After an affine change of coordinates that keeps the hyperplane $\Pi_{n-1}$ invariant, we may assume that the line through $p$ and $q$ is vertical and $\vec{e}_n\not\in\conv{\pol}{}$. This latter condition is possible because as $\pol$ is non-degenerate, either $\vec{e}_n$ or $-\vec{e}_n$ does not belong to $\conv{\pol}{}$. Therefore, $\pi_n(p)\in\Int(\p)=\pi_n(\Int(\pol))$ and by Lemma \ref{lem:face} we have $\pi_n(\Int(\Ee))\subset\Int(\p)$.

The rest of the proof is similar to the one of Corollary \ref{cor:convex1}. We include all the technicalities for the sake of completeness. Observe that 
\begin{equation}\label{ide}
\Gg^{-}=(\Gg^{-}\cup\Rr_0\cup\Int(\Ee))\cap\Pi_{n-1}^-.
\end{equation} 
Let $a'\in\R^{n-1}$ and let $\ell_{a'}$ be the vertical line through $(a',0)$. If $a_{n-1}\le 0$ we have by Lemma \ref{prop:face} (i) and (ii) 
$$
{\tt T}_\pol(\ell_{a'}\setminus\Gg^{-})=\ell_{a'}\setminus\Gg^{-}.
$$
Consequently
\begin{equation}\label{parte12}
{\tt T}_\pol((\R^n\setminus\Gg^{-})\cap\Pi_{n-1}^-)=\bigcup_{\substack{a'\in\R^{n-1}\\a_{n-1}\leq0}}{\tt T}_\pol(\ell_{a'}\setminus\Gg)=\bigcup_{\substack{a'\in\R^{n-1}\\a_{n-1}\leq0}}\ell_{a'}\setminus\Gg^{-}=(\R^n\setminus\Gg^{-})\cap\Pi_{n-1}^-.
\end{equation}
By \eqref{ide} it holds
\begin{equation}\label{parte12a}
{\tt T}_\pol(((\R^n\setminus\Gg^{-})\setminus(\Rr_0\cup\Int(\Ee)))\cap\Pi_{n-1}^-)={\tt T}_\pol((\R^n\setminus\Gg^{-})\cap\Pi_{n-1}^-)=(\R^n\setminus\Gg^{-})\cap\Pi_{n-1}^-.
\end{equation}

Observe also that 
$$
(\Gg^{-}\cup\Rr_0\cup\Int(\Ee))\cap\Int(\Pi_{n-1}^+)=(\Rr_0\cup\Int(\Ee))\cap\Int(\Pi_{n-1}^+)\subset\partial\pol\cap\Int(\Pi_{n-1}^+).
$$
We have shown above that $\pi_n(\Int(\Ee))\subset\Int(\p)$. If $a_{n-1}>0$ we have by Lemma \ref{prop:face}
$$
{\tt T}_\pol(\ell_{a'}\setminus(\Rr_0\cup\Int(\Ee)))=\begin{cases}
\ell_{a'}&\text{if $\ell\cap(\Rr_0\cup\Int(\Ee))=\varnothing$,}\\
\ell_{a'}&\text{if $a'\in\Int\p$ (because $\Rr_0\cup\Int(\Ee)\subset\partial\pol$),}\\
\ell_{a'}\setminus\Rr_0&\text{if $a'\in\partial\p$ and $\ell\cap\Rr_0$ contains at least two points.}
\end{cases}
$$
In case $a'\in\partial\p$ and $\ell\cap\Rr_0$ is a singleton, we have $\ell_{a'}\setminus\Rr_0\subset{\tt T}_\pol(\ell_{a'}\setminus\Rr_0)\subset\ell_{a'}$. Thus, 
\begin{equation}\label{cases2}
\begin{cases}
{\tt T}_\pol(\ell_{a'}\setminus(\Rr_0\cup\Int(\Ee)))=\ell_{a'}&\text{if $a'\not\in\partial\p,\ a_{n-1}>0$,}\\
\ell_{a'}\setminus\Rr_0\subset{\tt T}_\pol(\ell_{a'}\setminus(\Rr_0\cup\Int(\Ee)))\subset\ell_{a'}&\text{if $a'\in\partial\p,\ a_{n-1}>0$.}
\end{cases}
\end{equation}
Using the following equalities
\begin{multline}\label{extra2}
{\tt T}_\pol((\R^n\setminus\Gg^{-})\setminus(\Rr_0\cup\Int(\Ee)))\cap\Int(\Pi_{n-1}^+))\\
={\tt T}_\pol((\R^n\setminus(\Rr_0\cup\Int(\Ee)))\cap\Int(\Pi_{n-1}^+))=\bigcup_{\substack{a'\in\R^{n-1}\\a_{n-1}>0}}{\tt T}_\pol(\ell_{a'}\setminus(\Rr_0\cup\Int(\Ee)))
\end{multline}
and \eqref{cases2} we conclude
\begin{align}\label{parte22}
{\tt T}_\pol((\R^n\setminus(\Rr_0\cup\Int(\Ee)))\cap\Int(\Pi_{n-1}^+))&\subset\bigcup_{\substack{a'\in\R^{n-1}\\a_{n-1}>0}}\ell_{a'}=\Int(\Pi_{n-1}^+)=(\R^n\setminus\Gg^{-})\cap\Int(\Pi_{n-1}^+)\nonumber\\
{\tt T}_\pol((\R^n\setminus(\Rr_0\cup\Int(\Ee)))\cap\Int(\Pi_{n-1}^+))&\supset\bigcup_{\substack{a'\in\R^{n-1}\\a_{n-1}>0,a'\not\in\partial\p}}\ell_{a'}\cup\bigcup_{\substack{a'\in\R^{n-1}\\a_{n-1}>0,a'\in\partial\p}}\ell_{a'}\setminus\Rr_0\nonumber\\
&=(\Int(\Pi_{n-1}^+)\setminus(\Rr_0\cap\pi_n^{-1}(\partial\p))\\
&=(\Int(\Pi_{n-1}^+)\cap(\R^n\setminus\Gg^{-}))\setminus(\Rr_0\cap\pi_n^{-1}(\partial\p)).\nonumber
\end{align}
From \eqref{parte12a}, \eqref{extra2} and \eqref{parte22} we obtain \eqref{statement2}. The last equality in the statement follows from equation \eqref{parte12} and the fact that ${\tt T}_\pol(\ell_{a'})=\ell_{a'}$ for each $a'\in\R^{n-1}$, as required.
\end{proof}

\begin{cor}\label{cor:convex3}
Let $\pol\subset\R^n$ be an $n$-dimensional non-degenerate convex polyhedron and let $\Ee_1,\ldots,\Ee_s$ be finitely many faces of $\pol$ with disjoint interiors such that each one is non-parallel to $\Pi_{n-1}$ and meets the open half-space $\Int(\Pi_{n-1}^+)$. Let $\Rr:=\bigcup_{k=1}^s\Int(\Ee_k)$ and let $\Gg$ be a set such that $\Int(\pol)\cup\Rr\subset\Gg\subset\pol$. Denote $\Gg^{-}:=\Gg\cap\Pi_{n-1}^-$. Then, there exists a polynomial map ${\tt T}:\R^n\to\R^n$ such that ${\tt T}((\R^n\setminus\Gg^{-})\setminus\Rr)=\R^n\setminus\Gg^{-}$ and ${\tt T}(\R^n\setminus\Gg^{-})=\R^n\setminus\Gg^{-}$.
\end{cor}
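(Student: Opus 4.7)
My plan is a clean induction on $s$ that peels off one face at a time via Corollary \ref{cor:convex2}. The base case $s=0$ is immediate: $\Rr=\varnothing$ and the identity ${\tt T}:=\id_{\R^n}$ satisfies both equalities at once, since it trivially preserves $\R^n\setminus\Gg^-$.

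For the inductive step, assuming the result for $s-1$ faces, I would apply Corollary \ref{cor:convex2} to $\Ee:=\Ee_s$ with $\Rr_0:=\bigcup_{k<s}\Int(\Ee_k)=\Rr\setminus\Int(\Ee_s)$, keeping the same $\Gg$. Its hypotheses are immediate: the disjointness $\Int(\Ee_s)\cap\Rr_0=\varnothing$ follows from the disjoint-interiors assumption on the faces, and the sandwich $\Int(\pol)\cup\Int(\Ee_s)\cup\Rr_0=\Int(\pol)\cup\Rr\subset\Gg$ is given. This yields a polynomial map ${\tt T}_s:\R^n\to\R^n$ with ${\tt T}_s(\R^n\setminus\Gg^-)=\R^n\setminus\Gg^-$ and
$$
{\tt T}_s((\R^n\setminus\Gg^-)\setminus\Rr)\supset(\R^n\setminus\Gg^-)\setminus\bigl(\Rr_0\cap\pi_n^{-1}(\partial\p^{(s)})\bigr),
$$
where $\pi_n$ and $\p^{(s)}:=\pi_n(\pol)$ are computed in the coordinate system that Corollary \ref{cor:convex2} associates with $\Ee_s$. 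The crucial observation is the tautological inclusion $\Rr_0\cap\pi_n^{-1}(\partial\p^{(s)})\subset\Rr_0$, which upgrades the previous bound to ${\tt T}_s((\R^n\setminus\Gg^-)\setminus\Rr)\supset(\R^n\setminus\Gg^-)\setminus\Rr_0$.

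I would then invoke the inductive hypothesis for the $s-1$ faces $\Ee_1,\ldots,\Ee_{s-1}$ (which retain the required geometric properties because $\Pi_{n-1}$ and $\Int(\Pi_{n-1}^+)$ are intrinsic to the original coordinates and unaffected by the previous construction), obtaining a polynomial map ${\tt T}':\R^n\to\R^n$ that preserves $\R^n\setminus\Gg^-$ and sends $(\R^n\setminus\Gg^-)\setminus\Rr_0$ onto $\R^n\setminus\Gg^-$. The composition ${\tt T}:={\tt T}'\circ{\tt T}_s$ is polynomial, preserves $\R^n\setminus\Gg^-$, and satisfies
$$
{\tt T}((\R^n\setminus\Gg^-)\setminus\Rr)\supset{\tt T}'((\R^n\setminus\Gg^-)\setminus\Rr_0)=\R^n\setminus\Gg^-,
$$
while the reverse inclusion ${\tt T}((\R^n\setminus\Gg^-)\setminus\Rr)\subset{\tt T}(\R^n\setminus\Gg^-)=\R^n\setminus\Gg^-$ is automatic. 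There is no genuine obstacle: the whole argument is driven by the recognition that the residual set produced by a single application of Corollary \ref{cor:convex2} is automatically contained in $\Rr_0$, which is precisely the $\Rr$ for the case of $s-1$ faces, making the inductive step fit without friction.
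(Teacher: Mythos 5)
Your proposal is correct and takes essentially the same route as the paper's proof: an induction on $s$ that peels off one face via Corollary \ref{cor:convex2}, upgrades its left-hand inclusion using $\Rr_0\cap\pi_n^{-1}(\partial\p)\subset\Rr_0$, and composes with the map supplied by the induction hypothesis (the paper merely starts the induction at $s=1$ instead of $s=0$).
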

\begin{proof}
We proceed by induction on $s$. If $s=1$, the statement follows from Corollary \ref{cor:convex2}. So let us assume that $s>1$ and let $\Rr_0:=\bigcup_{k=1}^{s-1}\Int(\Ee_k)$. By induction hypothesis there exists a polynomial map ${\tt T}_0:\R^n\to\R^n$ such that ${\tt T}_0((\R^n\setminus\Gg^{-})\setminus\Rr_0))=\R^n\setminus\Gg^{-}$ and ${\tt T}_0(\R^n\setminus\Gg^{-})=\R^n\setminus\Gg^{-}$. On the other hand by Corollary \ref{cor:convex2} there exists a polynomial map ${\tt T}_1:\R^n\to\R^n$ such that 
$$
(\R^n\setminus\Gg^{-})\setminus\Rr_0\subset{\tt T}_1((\R^n\setminus\Gg^{-})\setminus(\Rr_0\cup\Int(\Ee)))\subset\R^n\setminus\Gg^{-}
$$
and ${\tt T}_1(\R^n\setminus\Gg^{-})=\R^n\setminus\Gg^{-}$. Consider the polynomial map ${\tt T}={\tt T}_0\circ{\tt T}_1:\R^n\to\R^n$. We have
\begin{multline*}
\R^n\setminus\Gg^{-}={\tt T}_0((\R^n\setminus\Gg^{-})\setminus\Rr_0)\subset{\tt T}_0({\tt T}_1((\R^n\setminus\Gg^{-})\setminus(\Rr_0\cup\Int(\Ee))))\\
={\tt T}((\R^n\setminus\Gg^{-})\setminus\Rr)\subset{\tt T}_0(\R^n\setminus\Gg^{-})=\R^n\setminus\Gg^{-},
\end{multline*}
so ${\tt T}((\R^n\setminus\Gg^{-})\setminus\Rr))=\R^n\setminus\Gg^{-}$. Finally, 
$$
{\tt T}(\R^n\setminus\Gg^{-})={\tt T}_0({\tt T}_1(\R^n\setminus\Gg^{-}))={\tt T}_0(\R^n\setminus\Gg^{-})=\R^n\setminus\Gg^{-},
$$
as required.
\end{proof}

\subsection{Proof of Theorem \ref{main2}}\label{ss:teor4}
Let $\pol\subset\R^n$ be an $n$-dimensional non-degenerate convex polyhedron and let $\Ff_1,\ldots,\Ff_m$ be its facets. Let $H_i$ be the hyperplane generated by $\Ff_i$ and let ${\tth}_i$ be a linear equation of $H_i$ such that $\pol=\bigcap_{i=1}^m\{{\tth}_i\geq0\}$. For each $\veps>0$ denote by $H_i(\veps)$ the hyperplane of linear equation ${\tth}_i+\veps=0$. The hyperplanes $H_i$ and $H_i(\veps)$ are parallel and $H_i^+\subset H_i^+(\epsilon)$.\setcounter{paragraph}{0} 

\paragraph{}\label{I}Let $I:=\{i_1,\ldots,i_k,i_{k+1}\}\subset\{1,\ldots,m\}$ be such that
\begin{itemize}
\item[(1)] The vectorial hyperplanes $\vec{H}_{i_1},\dots,\vec{H}_{i_k}$ are linearly independent.
\item[(2)] $W_I:=\bigcap_{j=1}^kH_{i_j}$ is parallel to $H_{i_{k+1}}$
\item[(3)] $W_I\subset\Int(H_{i_{k+1}}^-)$.
\end{itemize}
Define $\delta(I):=\dist(W_I,H_{i_{k+1}}^+)$. Let ${\mathfrak I}$ be the collection of all the subsets $I\subset\{1,\ldots,m\}$ satisfying conditions (1), (2) and (3). Define
$$
\delta:=\min\{\delta(I):\ I\in{\mathfrak I}\}>0.
$$

\paragraph{}\label{k0}Fix $\veps>0$ such that $\dist(H_i^+,H_i(\veps))<\frac{\delta}{2}$ for $i=1,\ldots,m$ and define
$$
\pol_0:=\bigcap_{i=1}^m H_i^+(\epsilon),
$$
Observe that $\pol\subset\Int(\pol_0)$ and notice that $\pol_0$ is an $n$-dimensional non-degenerate convex polyhedron. Define
$$
\Gg_0:=\Int(\pol_0),\quad\Gg_i:=\Gg_{i-1}\cap H_i^+\quad\text{and}\quad\Gg_m=\pol.
$$
Denote $\pol_i:=\cl(\Gg_i)$ and observe that
$$
\Int(\pol_i)\subset\Gg_i=H_1^+\cap\cdots\cap H_i^+\cap\Int(H_{i+1}^+(\epsilon))\cap\cdots\cap\Int(H_m^+(\epsilon))\subset\pol_i.
$$
By Theorems \ref{main-known} and \ref{main1} the semialgebraic set $\R^n\setminus\Gg_0$ is a polynomial image of $\R^n$. Our goal is to make use of Corollaries \ref{cor:convex1} and \ref{cor:convex3} to represent $\R^n\setminus\pol$ as a polynomial image of $\R^n$. We need first the following property of the sets $\Gg_i$.
\begin{lem}\label{lem:openface}
Fix $i=1,\ldots,m$ and let $\Ee$ be a face of $\pol_i:=\cl(\Gg_i)$ that lies in $\Int(H_{i+1}^-)$ and is parallel to $H_{i+1}$. Then $\Ee\cap\Gg_i=\varnothing$.
\end{lem}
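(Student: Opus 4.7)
The plan is to analyse $\Ee$ through the active index sets at $\Ee$ and split into two cases. Because $\Ee$ is parallel to the hyperplane $H_{i+1}$ it cannot be all of the $n$-dimensional polyhedron $\pol_i$, so $\Ee$ is a proper face; standard polyhedral theory applied to
$$
\pol_i=\bigcap_{j=1}^iH_j^+\cap\bigcap_{j=i+1}^mH_j^+(\veps)
$$
yields maximal sets $J_1\subset\{1,\ldots,i\}$ and $J_2\subset\{i+1,\ldots,m\}$ such that $\Ee\subset H_j$ for $j\in J_1$ and $\Ee\subset H_j(\veps)$ for $j\in J_2$, with $J_1\cup J_2\neq\varnothing$, and moreover $\operatorname{aff}(\Ee)=\bigcap_{j\in J_1}H_j\cap\bigcap_{j\in J_2}H_j(\veps)$.

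The easy case is $J_2\neq\varnothing$: for any $j_0\in J_2$ we have ${\tth}_{j_0}\equiv-\veps$ on $\Ee$, whereas membership in $\Gg_i$ demands the strict inequality ${\tth}_{j_0}>-\veps$ (since $j_0\geq i+1$). Hence no point of $\Ee$ belongs to $\Gg_i$ and $\Ee\cap\Gg_i=\varnothing$, as required.

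The heart of the argument is to show that the alternative case $J_2=\varnothing$ cannot occur. Here $J_1\neq\varnothing$ and $W:=\operatorname{aff}(\Ee)=\bigcap_{j\in J_1}H_j$. I would extract a minimal $J_1'\subset J_1$ with $\{\vec{\tth}_j\}_{j\in J_1'}$ linearly independent and $\bigcap_{j\in J_1'}H_j=W$. The parallelism hypothesis on $\Ee$ gives $\vec{W}\subset\vec{H}_{i+1}$, and since the affine flat $W$ is parallel to $H_{i+1}$ and meets $\Int(H_{i+1}^-)$ (it contains $\Ee$), the whole of $W$ lies in $\Int(H_{i+1}^-)$. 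Therefore $I:=J_1'\cup\{i+1\}$ fulfils conditions (1), (2) and (3) of~\ref{I}, so $I\in{\mathfrak I}$ and $\dist(W,H_{i+1}^+)=\delta(I)\geq\delta$.

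The contradiction now follows from the choice of $\veps$ in~\ref{k0}. Every $x\in\Ee$ satisfies $x\in\pol_i\subset H_{i+1}^+(\veps)$ together with $x\in\Int(H_{i+1}^-)$, so $x$ is squeezed strictly between the parallel hyperplanes $H_{i+1}$ and $H_{i+1}(\veps)$; hence $\dist(x,H_{i+1}^+)\leq\dist(H_{i+1}^+,H_{i+1}(\veps))<\delta/2$, and since $\Ee\subset W$ we obtain $\dist(W,H_{i+1}^+)\leq\dist(\Ee,H_{i+1}^+)<\delta/2<\delta$, contradicting the previous lower bound. Hence $J_2=\varnothing$ is impossible and the first case gives $\Ee\cap\Gg_i=\varnothing$. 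The main obstacle I anticipate is cleanly invoking the correspondence between faces of $\pol_i$ and active index sets $(J_1,J_2)$ and checking that the parallelism really passes to the ambient flat $W$; once that is in place the rest reduces to a direct estimate with the slack $\delta$.
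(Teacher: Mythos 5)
Your proof is correct and follows essentially the same route as the paper's: both arguments reduce to showing that $\operatorname{aff}(\Ee)$ must lie in some shifted hyperplane $H_\ell(\veps)$ with $\ell\geq i+1$ (your case $J_2\neq\varnothing$, which immediately kills the intersection with $\Gg_i$ because $\Gg_i$ imposes the strict inequalities ${\tth}_\ell>-\veps$), and both rule out the alternative by recognizing $\operatorname{aff}(\Ee)$ as a set $W_I$ with $I\in{\mathfrak I}$, so that $\dist(W_I,H_{i+1}^+)\geq\delta$, contradicting the squeeze $\dist(\Ee,H_{i+1}^+)<\delta/2$ forced by $\Ee\subset\pol_i\subset H_{i+1}^+(\veps)$ and $\Ee\subset\Int(H_{i+1}^-)$. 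Your handling of the active-index-set correspondence and of why the whole flat $W$ (not just $\Ee$) lies in $\Int(H_{i+1}^-)$ is, if anything, slightly more explicit than the paper's.
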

\begin{proof}
Let $W$ be the affine subspace generated by $\Ee$. We claim: \em $W\subset H_\ell(\veps)$ for some $\ell=i+1,\ldots,m$\em. Note that if this is the case then $W\cap\Gg_i=\varnothing$ and so $\Ee\cap\Gg_i=\varnothing$. 

To prove our claim, assume that none of the aforementioned inclusions hold, so that $W=\bigcap_{j=1}^kH_{i_j}$ for some $1\leq i_1,\ldots,i_k\leq i$ such that $\vec{H}_{i_1},\ldots,\vec{H}_{i_k}$ are linearly independent. Thus, $W=W_I$ for $I=\{i_1,\ldots,i_k,i+1\}$ following the notation of \ref{I}. We show now that this cannot happen. As $\Ee\subset\Int(H_{i+1}^-)$ and $\Ee$ is parallel to $H_{i+1}$, we have $\dist(W_I,H_{i+1}^+)\geq\delta$. Since $\dist(H_{i+1}^+,H_{i+1}(\veps))<\frac{\delta}{2}$, we deduce $\Ee\subset W_I\subset\Int(H_{i+1}(\veps)^-)$. On the other hand, we must have $\Ee\subset \pol_i\subset H_{i+1}(\veps)^+$, which is a contradiction.
\end{proof}

We prove next Theorem \ref{main2} for a non-degenerate $n$-dimensional convex polyhedron.

\begin{thm}\label{nd}
Let $\pol\subset\R^n$ be a non-degenerate $n$-dimensional convex polyhedron. Then $\R^n\setminus\pol$ is a polynomial image of $\R^n$.
\end{thm}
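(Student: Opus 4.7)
The plan is to prove by finite induction on $i=0,1,\ldots,m$ that $\R^n\setminus\Gg_i$ is a polynomial image of $\R^n$; since $\Gg_m=\pol$, this will prove the theorem. The base case $i=0$ follows from Theorem~\ref{main-known} (bounded case) or Theorem~\ref{main1} (unbounded case) applied to $\pol_0$: indeed $\R^n\setminus\Gg_0=\R^n\setminus\Int(\pol_0)$, and since $\pol_0$ arises from $\pol$ by an outward $\veps$-displacement of every facet one has $\conv{\pol_0}{}=\conv{\pol}{}$, so $\pol_0$ is an $n$-dimensional convex polyhedron that is not a layer.

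For the inductive step $i-1\to i$, I would change coordinates so that $H_i=\Pi_{n-1}=\{\x_{n-1}=0\}$ with $H_i^+=\Pi_{n-1}^-$ and $\vec{e}_n\notin\conv{\pol_{i-1}}{}$, using Corollary~\ref{posfacet} to place $\pol_{i-1}$ in $\vec{\ell}_n$-bounded position when a facet with at least two facets is available; otherwise Lemma~\ref{prop:polP2} provides the one-sided separator. This makes the polynomial ${\tt P}$ of~\eqref{P} and hence the map ${\tt T}_{\pol_{i-1}}$ of~\eqref{T} available. Corollary~\ref{cor:convex1} with $\pol=\pol_{i-1}$ and $\Gg=\Gg_{i-1}$ then yields, using $\Gg^-=\Gg_{i-1}\cap\Pi_{n-1}^-=\Gg_{i-1}\cap H_i^+=\Gg_i$,
$$
(\R^n\setminus\Gg_i)\setminus R_i\;\subset\;{\tt T}_{\pol_{i-1}}(\R^n\setminus\Gg_{i-1})\;\subset\;\R^n\setminus\Gg_i,
$$
where $R_i:=\Gg_{i-1}\cap\pi_n^{-1}(\partial\p_{i-1})\cap\Int(\Pi_{n-1}^+)$ with $\p_{i-1}:=\pi_n(\pol_{i-1})$.

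To fill in the missing residue $R_i$, I would use that by~\ref{lem:basic} the set $\pi_n^{-1}(\partial\p_{i-1})\cap\pol_{i-1}$ decomposes as a union of faces of $\pol_{i-1}$, so $R_i$ is covered by the relative interiors of finitely many such faces meeting $\Int(\Pi_{n-1}^+)$. Among them, any face $\Ee$ contained in one of the shifted hyperplanes $H_j(\veps)$ with $j\geq i$ satisfies $\Int(\Ee)\cap\Gg_{i-1}=\varnothing$ and hence does not contribute, while Lemma~\ref{lem:openface} disposes of the faces parallel to $H_i$ that lie in $\Int(H_i^-)$. The remaining faces $\Ee_1,\ldots,\Ee_s$ are non-parallel to $\Pi_{n-1}$, meet $\Int(\Pi_{n-1}^+)$, and (since they are not contained in any $H_j(\veps)$ for $j\geq i$) have their relative interiors inside $\Gg_{i-1}$. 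Setting $\Rr:=\bigcup_{k=1}^s\Int(\Ee_k)$, Corollary~\ref{cor:convex3} with $\pol=\pol_{i-1}$ and $\Gg=\Gg_{i-1}$ produces a polynomial map ${\tt T}_i'$ satisfying ${\tt T}_i'((\R^n\setminus\Gg_i)\setminus\Rr)=\R^n\setminus\Gg_i$ and ${\tt T}_i'(\R^n\setminus\Gg_i)=\R^n\setminus\Gg_i$. Since $R_i\subset\Rr$, the set ${\tt T}_{\pol_{i-1}}(\R^n\setminus\Gg_{i-1})$ sits between $(\R^n\setminus\Gg_i)\setminus\Rr$ and $\R^n\setminus\Gg_i$, so the composition ${\tt T}_i'\circ{\tt T}_{\pol_{i-1}}$ maps $\R^n\setminus\Gg_{i-1}$ onto $\R^n\setminus\Gg_i$, closing the induction.

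The hardest part, and the reason for the careful setup in~\ref{I}--\ref{k0}, is to make Lemma~\ref{lem:openface} usable at every inductive step: the calibration $\veps<\delta/2$ is precisely what prevents residual vertical faces parallel to $H_i$ from touching $\Gg_{i-1}$, as these would otherwise fall outside the scope of Corollary~\ref{cor:convex3}. A secondary subtlety is the case in which $\pol_{i-1}$ admits no $\vec{\ell}_n$-bounded placement with $H_i=\Pi_{n-1}$; there one invokes Lemma~\ref{prop:polP2} together with the variant of Corollary~\ref{cor:convex1} in which $(\Int(\p)\times\R)\setminus\pol$ has a single connected component, yielding a residue of the same combinatorial type and allowing the argument to proceed uniformly.
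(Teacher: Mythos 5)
Your proof follows the paper's own argument essentially verbatim: the same filtration $\Gg_0\supset\cdots\supset\Gg_m=\pol$ built from the $\veps$-inflated polyhedron $\pol_0$ with the calibration $\veps<\delta/2$, the same application of Corollary~\ref{cor:convex1} followed by Corollary~\ref{cor:convex3} (with Lemma~\ref{lem:openface} ruling out residual faces parallel to the current hyperplane) at each step, and the same final composition starting from Theorems~\ref{main-known} and~\ref{main1}. The only addition is your closing remark on the non-$\vec{\ell}_n$-bounded placement, a technicality the paper absorbs into Corollary~\ref{posfacet} and the two-case definition of ${\tt P}$ in \eqref{P}.
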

\begin{proof}
Consider the non-degenerate $n$-dimensional convex polyhedron $\pol_0$ and the semialgebraic sets $\Gg_0,\dots,\Gg_m$ described in \ref{k0}. Place $\pol$ in $\vec{\ell}_n$-bounded position in such a way that the facet $\Ff_1$ is contained in $\Pi_{n-1}$ and $\pol\subset\Pi_{n-1}^-$ (see Lemma \ref{posfacet}). Notice that $\pol_0=\cl(\Gg_0)$ is also in $\vec{\ell}_n$-bounded position. Observe that $H_1=\Pi_{n-1}$, $H_1^+=\Pi_{n-1}^-$ and $\Gg_0\cap H_1^+=\Gg_1$. By Corollary \ref{cor:convex1} applied to $\Gg_0$ there exists a polynomial map ${\tt T}_0:\R^n\to\R^n$ such that 
$$
(\R^n\setminus\Gg_1)\setminus(\Int(H_1^-)\cap\Gg_0\cap\pi_n^{-1}(\partial\p_0))\subset{\tt T}_0(\R^n\setminus\Gg_0)\subset\R^n\setminus\Gg_1
$$
where $\p_0:=\pi_n(\pol_0)$. As $\Gg_0=\Int(\pol_0)$, we have $\Gg_0\cap\pi_n^{-1}(\partial\p_0)=\varnothing$, so ${\tt T}_0(\R^n\setminus\Gg_0)=\Gg_1$. 

Assume by induction that $\R^n\setminus\Gg_i$ is a polynomial image of $\R^n$. Place $\pol_{i+1}$ in $\vec{\ell}_n$-bounded position in such a way that the facet $\Ff_{i+1}$ is contained in $\Pi_{n-1}=H_{i+1}$ and $\pol\subset\Pi_{n-1}^-=H_{i+1}^+$ (see Lemma \ref{posfacet}). Note that $\pol_i=\cl(\Gg_i)$ is also in $\vec{\ell}_n$-bounded position. By Corollary \ref{cor:convex1} applied to $\Gg_i$ we obtain a polynomial map ${\tt T}_{i,0}:\R^n\to\R^n$ with
$$
(\R^n\setminus\Gg_{i+1})\setminus(\Int(H_{i+1}^-)\cap\Gg_i\cap\pi_n^{-1}(\partial\p_i))\subset{\tt T}_{i,0}(\R^n\setminus\Gg_i)\subset\R^n\setminus\Gg_{i+1}
$$
and ${\tt T}_{i,0}(\R^n\setminus\Gg_{i+1})=\R^n\setminus\Gg_{i+1}$ where $\p_i:=\pi_n(\cl(\Gg_i))$. By \ref{lem:basic} the set $\Gg_i\cap\pi_n^{-1}(\partial\p_i)$ can be expressed as a finite, disjoint union of some interiors of faces of $\pol_i=\cl(\Gg_i)$. Thus, there exist finitely many facets $\Ee_1,\ldots,\Ee_r$ of $\pol_i$ such that $\Gg_i\cap\pi_n^{-1}(\partial\p_i)=\Int(\Ee_1)\sqcup\ldots\sqcup\Int(\Ee_r)$. We may assume that only the faces $\Ee_1,\ldots,\Ee_s$ where $0\leq s\leq r$ intersect $\Int(H_{i+1}^-)$. We claim: \em $\Ee_j$ is non-parallel to $H_{i+1}^-$ for $j=1,\ldots,s$\em.

Suppose that $\Ee_j$ is parallel to $H_{i+1}$ for some $j=1,\ldots,s$. As $\Ee_j\cap\Int(H_{i+1}^-)\neq\varnothing$, we deduce $\Ee_j\subset\Int(H_{i+1}^-)$. By Lemma \ref{lem:openface} $\Int(\Ee_j)\subset\Ee_j\cap\Gg_i=\varnothing$, which is a contradiction. 

Define $\Rr_i:=\Int(\Ee_1)\sqcup\ldots\sqcup\Int(\Ee_s)$. By Corollary \ref{cor:convex3} there exists a polynomial map ${\tt T}_{i,1}:\R^n\to\R^n$ such that 
$$
{\tt T}_{i,1}((\R^n\setminus\Gg_{i+1})\setminus\Rr_i)=\R^n\setminus\Gg_{i+1}
$$
and ${\tt T}_{i,1}(\R^n\setminus\Gg_{i+1})=\R^n\setminus\Gg_{i+1}$. Thus, the polynomial map ${\tt T}_i:={\tt T}_{i,1}\circ{\tt T}_{i,0}:\R^n\to\R^n$ satisfies
\begin{multline*}
\R^n\setminus\Gg_{i+1}={\tt T}_{i,1}((\R^n\setminus\Gg_{i+1})\setminus\Rr_i)\subset{\tt T}_{i,1}({\tt T}_{i,0}(\R^n\setminus\Gg_i))\\
={\tt T}_i(\R^n\setminus\Gg_i)\subset{\tt T}_{i,1}(\R^n\setminus\Gg_{i+1})=\R^n\setminus\Gg_{i+1},
\end{multline*}
that is, ${\tt T}_i(\R^n\setminus\Gg_i)=\R^n\setminus\Gg_{i+1}$.

The composition ${\tt T}:={\tt T}_{m-1}\circ\cdots\circ{\tt T}_0$ satisfies ${\tt T}(\R^n\setminus\Int(\pol_0))=\R^n\setminus\pol$ and since $\R^n\setminus\Int(\pol_0)$ is by Theorems \ref{main-known} and \ref{main1} a polynomial image of $\R^n$, we are done.
\end{proof}

\begin{proof}[Proof of Theorem \em\ref{main2}]
If $\pol$ is non-degenerate, the result follows from Theorem \ref{nd}. So we assume that $\pol$ is degenerate. If that is the case, we may write $\pol$ after an affine change of coordinates as $\pol=\p\times\R^{n-k}$ where $\p\subset\R^k$ is a $k$-dimensional non-degenerate convex polyhedron. If $k\geq2$, there exists by Theorem \ref{nd} a polynomial map ${\tt T}_0:\R^k\to\R^k$ such that ${\tt T}_0(\R^k)=\R^k\setminus\p$, so the polynomial map
$$
{\tt T}:\R^k\times\R^{n-k}\to\R^k\times\R^{n-k},\ x:=(x',x'')\mapsto({\tt T}_0(x'),x'')
$$
satisfies ${\tt T}(\R^n)=\R^n\setminus\pol$. 

If $k=0$, then $\pol=\R^n$ and there is nothing to say. If $k=1$, we may assume that $\p=({-\infty},0]$ (because $\pol$ is not a layer). This means that $\pol=\{\x_1\leq0\}$, so $\R^n\setminus\pol=\{\x_1>0\}$. As it is well-known this set is a polynomial image of $\R^n$. Take for instance the polynomial map
$$
{\tt T}:=({\tt T_1},\ldots,{\tt T}_n):\R^n\to\R^n,\ x=(x_1,x_2,x'')\mapsto((x_1x_2-1)^2+x_1^2,x_2(x_1x_2-1),x''),
$$
where $x'':=(x_3,\ldots,x_n)$ and whose image is $\R^n\setminus\pol=\{\x_1>0\}$ (see \cite{fg1}), as required.
\end{proof}

\end{document}